\def\R{\mathbb R}
\def\Q{\mathbb Q}
\def\N{\mathbb N}
\def\A{\mathcal A}
\def\L{\mathcal L}
\def\uu{\mathbf u}
\def\vv{\mathbf v}
\def\ss{\mathbf s}
\def\ww{\mathbf w}
\def\dd{\mathbf d}
\def\R{\mathbb R}
\def\Q{\mathbb Q}
\def\N{\mathbb N}
\def\A{\mathcal A}
\newtheorem{thm}{Theorem}
\newtheorem{theorem}[thm]{Theorem}
\newtheorem{coro}[thm]{Corollary}
\newtheorem{lem}[thm]{Lemma}
\newtheorem{lmx
m}[thm]{Lemma}
\newtheorem{claim}[thm]{Claim}
\newtheorem{obs}[thm]{Observation}
\newtheorem{prop}[thm]{Proposition}
\newtheorem{defi}[thm]{Definition}
\crefname{thm}{theorem}{theorems}
\crefname{theorem}{theorem}{theorems}
\crefname{coro}{corollary}{corollaries}
\crefname{example}{example}{examples}
\crefname{lem}{lemma}{lemmas}
\crefname{lmm}{lemma}{lemmas}
\crefname{claim}{claim}{claims}
\crefname{obs}{observation}{observations}
\crefname{proposition}{proposition}{propositions}
\crefname{prop}{proposition}{propositions}
\crefname{defi}{definition}{definitions}
\newtheorem{remark}[thm]{Remark}
\newtheorem{example}[thm]{Example}
\crefname{example}{example}{examples}
\begin{document}

\title{On Sturmian substitutions closed under derivation}

\author[Edita Pelantov\'a]{Edita \textsc{Pelantov\'a}}
\author[\v St\v ep\'an Starosta]{\v St\v ep\' an  \textsc{Starosta}}

\address[Edita Pelantov\'a]{
Department of Mathematics, FNSPE, Czech Technical University in Prague\\
Trojanova 13, 120 00 Praha 2, Czech Republic}

\address[\v St\v ep\'an Starosta]{
Department of Applied Mathematics, FIT, Czech Technical University in Prague\\
Thákurova 9, 160 00 Praha 6, Czech Republic}


\begin{abstract}
Occurrences of a factor $w$ in an infinite uniformly recurrent sequence ${\bf u}$ can be encoded by an infinite sequence over a finite alphabet.
This sequence is usually denoted ${\bf d_{\bf u}}(w)$ and called the derived sequence to $w$ in ${\bf u}$.
If $w$ is a prefix of  a fixed point ${\bf u}$  of a primitive substitution  $\varphi$,  then by Durand's result from 1998, the derived sequence
${\bf d_{\bf u}}(w)$ is fixed by a  primitive substitution $\psi$ as well.
For a non-prefix factor $w$, the derived sequence  ${\bf d_{\bf u}}(w)$ is fixed by a substitution only exceptionally.
To   study this phenomenon we introduce a new notion:   A finite set  $M $  of substitutions  is said to be closed  under derivation if the derived sequence ${\bf d_{\bf u}}(w)$  to any factor $w$ of any fixed point ${\bf u}$ of  $\varphi \in M$  is fixed by a morphism $\psi \in M$.
In our article we characterize the Sturmian substitutions which belong to a  set $M$ closed under derivation.
The characterization uses either the slope and the intercept of its fixed point or its S-adic representation.
\end{abstract}


\maketitle

\noindent \textit{Keywords:} {return word, derived sequence, Sturmian word, S-adic representation,  fixed point,  primitive morphism}

\noindent \textit{2000MSC:} 68R15



\section{Introduction}

In combinatorics of words, the notion of return words to a factor of an infinite word is an analogue to first return map in dynamical systems.
Given an infinite word $\uu = u_0 u_1 u_2 \dots $ with $u_i$ being an element of a finite alphabet, we say that $u_i u_{i+1} \cdots u_{j-1}$ is a return word to a factor $w$ if for each $k$ satisfying $i \leq k \leq j$, the factor $w$ is a prefix of the infinite word $u_{k}u_{k+1}u_{k+2} \dots$ only for $k = i$ and $k = j$.
We study infinite words $\uu$  such that every factor  of $\uu$  occurs infinitely many times and has a finite number of return words.
These words are called uniformly recurrent.
This class of words includes purely periodic words.
Obviously, any  factor  $w$  of a purely periodic word  $\uu$ which is longer than the period   has just one return word.
On the other hand, if a uniformly recurrent word $\uu$ has a factor having only one return word, then $\uu$ is purely periodic.

In this article, we focus on uniformly recurrent words which have exactly 2 return words to each factor.
As shown by Vuillon in \cite{Vu}, such words are exactly the infinite Sturmian words, i.e., aperiodic words having the least factor complexity possible.

If a factor $w$ of a uniformly recurrent word $\uu$ has $k$ return words, then the order of their occurrences in $\uu$ can be coded by an infinite word over a $k$-letter alphabet.
This word is denoted by $\dd_\uu(w)$ and called the derived word of $w$ in $\uu$.
Derived words to prefixes $w$ of $\uu$ were introduced by Durand in \cite{Durand98} in order to characterize primitively substitutive infinite words.

Among other, Durand showed that if $w$ is a prefix of an infinite word $\uu$ fixed by a primitive substitution, then $\dd_{\uu}(w)$ is fixed by a primitive substitution as well.
Taking all such prefixes $w$, the set of derived words $\dd_{\uu}(w)$ is finite, and thus the set of primitive substitutions fixing these derived  words to prefixes  is finite.  Moreover, if we consider  derived words  to a prefix of a derived  word $\dd_{\uu}(w)$, we obtain again a derived word to some prefix $w'$ of the original word $\uu$.
Thus, we observe that the finite set of primitive substitutions fixing the derived words to prefixes  is invariant under taking derived  word to a prefix and considering its fixing primitive substitution.

If $w$ is not a prefix, then $\dd_\uu(w)$ need not be fixed by a substitution at all.
To study this phenomenon, we introduce the following definition.

\begin{defi} Let $M$ be a finite set of primitive substitutions.
The set  $M$ is said to be  \emph{closed under  derivation}  if the derived word ${\bf d_{\bf u}}(w)$ with respect to any factor $w$ of any fixed point ${\bf u}$ of  $\varphi \in M$  is fixed by a primitive substitution  $\psi \in M$.

 A primitive substitution $\xi$ is called \emph{closeable under derivation} if $\xi$ belongs to a set $M$ closed under derivation.
\end{defi}

Substitutions having Sturmian words as fixed points are very well described (see \cite{Lo2}) and there exists a handy tool to characterize which Sturmian words are fixed by a substitution (see \cite{Ya99}).
It is therefore convenient to start the study of sets closed under derivation by considering Sturmian words and Sturmian substitutions.
In this article, we fully solve this question: in \Cref{neniReflex,jeReflex} we characterize Sturmian substitutions that are closeable under derivation; the characterization is presented in the terms of the representation in the special Sturmian monoid (generated by the morphisms given in \eqref{eq:elem} below) and also alternatively in terms of the slope and the intercept of its fixed point (\Cref{thm:last}).

The article is organized as follows.
\Cref{sec:prelim} contains necessary definitions and notions.
In \Cref{sec:sturm,sec:sturmmor} we introduce needed results on Sturmian words and morphisms.
\Cref{sec:lemmas} contains auxiliary lemmas required in the last \Cref{sec:nonderiv,sec:deriv}, where we deal with the case of Sturmian morphism that are not closed under derivation and Sturmian morphisms that are closed under derivation, respectively.





\section{Preliminaries} \label{sec:prelim}

Let $\A$ denote an \emph{alphabet}, a finite set of symbols called \emph{letters}.
A \emph{finite word} of length $n$ over $\A$ is a concatenation of $n$ letters, i.e., $u=u_0u_1\cdots u_{n-1}$ with $u_i \in \mathcal{A}$.
The \emph{length} of $u$ equals $n$ and is denoted by $|u|$.
The set of all finite words over the alphabet $\A$ and the operation of word concatenation form a monoid $\mathcal{A}^*$.
The unique word of length $0$, the \emph{empty word} $\varepsilon$, is its neutral element.
The \emph{cyclic shift}  of the word $u$ is the word
\begin{equation}\label{eq:def_of_cyc}
{\rm cyc}(u) = u_{1}u_{2}\cdots u_{n-1}u_0.
\end{equation}

An \emph{infinite word} over $\mathcal{A}$ is a sequence $\uu = u_0u_1u_2\cdots  = \left(u_i\right)_{i\in \mathbb{N}} \in \mathcal{A}^{\mathbb{N}}$ with $u_i \in \mathcal{A}$ for all $i \in \N = \left\{ 0,1,2, \ldots \right \}$.
A finite word $w$ is a \emph{factor} of $\uu$ if there exists an integer $i$ such that $w = u_iu_{i+1}u_{i+2}\cdots u_{i+|w|-1}$.
The index $i$ is an \emph{occurrence} of $w$ in $\uu$.
The \emph{language $\mathcal{L}(\uu)$ of $\uu$} is the set of all its factors.
A factor $w$ is a \emph{right special} factor if there exist at least two distinct letters $a,b \in \mathcal{A}$ such that $wa, wb \in \mathcal{L}(\uu)$.
A \emph{left special} factor is defined analogously.
A factor is \emph{bispecial} if it is left and right special.

Given a word $u$, finite or infinite, and finite words $p,v$ and a word $s$ such that $u = pvs$, then we say that $p$ is a \emph{prefix} of $u$ and $s$ is its \emph{suffix}.
The prefix $p$ is \emph{proper} if $p \neq \varepsilon$ and $p \neq u$.

If each factor of $\uu \in \A^\N$ has infinitely many occurrences in $\uu$, the word $\uu$ is \emph{recurrent}.
Given a recurrent infinite word $\uu$ and its factor $w$, a \emph{return word of $w$ in $\uu$} is a factor $v \in \L(\uu)$ such that $vw \in \L(\uu)$ and the factor $w$ occurs in $vw$ exactly twice --- once as a prefix and once as a suffix.
Assume there is an integer $k$ such that $r_0, r_1, \ldots r_k$ are all return words of $w$ in $\uu$.
We can write $\uu = pr_{s_0}r_{s_1}r_{s_2} \ldots$ with $|p|$ equal the least occurrence of $w$ in $\uu$ and $s_i \in \{0,1, \ldots, k\}$.
We say that the word $\left( s_i \right)_{i=0}^{+ \infty}$ is the \emph{derived word of $\uu$ with respect to $w$}, denoted $\dd_\uu(w)$.
For $w$ being a prefix, these words were introduced in \cite{Durand98}.
For a general factor $w$, they are investigated in~\cite{HoZa99}.

In this article, we consider derived words up to a permutation of letters, i.e., we do take into account the indexing of the return words when comparing derived words.




A mapping $\psi: \mathcal{A}^*\to  \mathcal{A}^* $ is a \emph{morphism} over $\mathcal{A}^*$ if $\psi(uv) = \psi(u)\psi(v)$ for each $u,v \in  \mathcal{A}^*$.
The domain of  $\psi$ is extended to $\mathcal{A}^{\mathbb{N}}$ naturally by $\psi(\uu) = \psi(u_0u_1u_2\dots) = \psi(u_0)\psi(u_1)\psi(u_2)\dots$ for $\uu \in \A^\N$.
If  $\psi(\uu) = \uu$, we say that $\uu$ is a \emph{fixed point} of $\psi$.
A morphism $\psi$ is \emph{primitive} if  there exists an integer $k$ such that for each pair of letters $a,b \in \mathcal{A}$ the word  $\psi^k(a)$ contains the letter $b$.

In \cite{Durand98}, a morphism $\psi$ over $\mathcal{A}$ is called \emph{substitution} if there exists a letter $a \in \mathcal{A}$ such that  $\psi(a) =aw$ for some  non-empty word $w$ and the length of the $n^{th}$ iteration of $\psi$ applied to  $a$ tends to infinity, i.e.,  $|\psi^n(a)| \to +\infty$.
Clearly,  any substitution has at least one  fixed point, namely $\uu = aw\psi(w)\psi^2(w)\psi^3(w)\cdots$.
This fixed point is usually denoted as  $\lim_{n\to \infty}\psi^{n}(a)$.
A primitive morphism  $\psi $  has some power $\psi^k$ which is a substitution.
For example,  a morphism given by  $\varphi(0) = 100$ and $\varphi(1)=0$ is not a substitution, but it is primitive as  $ \varphi^2(0) = 0100100$ and $ \varphi^2(1) = 100$.
The morphism $\varphi^2$ is a substitution and has two fixed points, namely $\lim_{n\to\infty}\varphi^{2n}(0)$  and  $\lim_{n\to\infty}\varphi^{2n}(1)$.



If $\uu$ is a fixed point by $\varphi$, then it is also fixed by $\varphi$ for all $k \in \N$.
An infinite word $\uu$ is \emph{rigid} if the set of all morphisms which fix $\uu$ is of the form $\left\{ \varphi^k
\colon k \in \N \right\}$ for some morphism $\varphi$.

\section{Sturmian words} \label{sec:sturm}

Sturmian words are infinite words over a two letter alphabet having the least unbounded factor complexity possible.
In other words, an infinite word is \emph{Sturmian} if for each $n \in \N$ the number of its factors of length $n$ equals $n+1$.
There are many other characterizations of Sturmian words.
For the phenomenon that we investigate, the characterization based on the notion of interval exchange transformation is the most suitable.

For a given parameters $\ell_0, \ell_1 >0$, we  consider the partition of the interval $I=[0,\ell_0+\ell_1)$ into $I_0=[0,\ell_0)$ and $I_1=[\ell_0, \ell_0+\ell_1)$ or the partition of $I=(0,\ell_0+\ell_1]$   into  $I_0=(0,\ell_0]$ and $I_1=(\ell_0, \ell_0+\ell_1]$.
The transformation $T: I\to I$ defined by
$$
T(x) =
\left\{
\begin{array}{ll}
x +  \ell_1 & \text{ if } x \in I_0,\\
x -  \ell_0 & \text{ if }  x \in I_1
\end{array}
\right.
$$
is a \textit{two interval exchange transformation}, or shortly \emph{2iet}.
If we take an initial point $ \rho \in I$, the sequence $\uu = u_0u_1u_2 \dots \in \{0,1\}^\N$ defined by
$$
u_n =
\left\{
\begin{array}{ll}
0 & \text{ if} \ T^n(\rho) \in  I_0,\\
1 & \text{ if} \ T^n(\rho)  \in I_1
\end{array}
\right.
$$
is a \textit{2iet sequence} with the \textit{parametrs}  $\ell_0,\ell_1,\rho$.
In other words, a 2iet sequence is a coding of itineraries $\left( T^n(\rho) \right)_{n=0}^{+\infty} $ with respect to the partition $I_0 \cup I_1$.
The value  $ \gamma = \frac{\ell_1}{\ell_0+\ell_1}$  is called the \emph{slope} of  $\uu$.
It is well known that the set of all 2iet sequences having  an irrational slope  coincides with the set of all Sturmian words (see for instance \cite{Lo2}).
If  we need  to distinguish   whether a Sturmian word comes from a transformation with the domain  $I=[0,\ell_0+\ell_1)$ or with the domain $I=(0,\ell_0+\ell_1]$ we use the names \emph{lower and upper Sturmian word}, respectively.
For most of the parameters $ \rho \in (0, \ell_0+\ell_1)$ the lower Sturmian word with the parameters $\ell_0,\ell_1,\rho$ equals to  the upper Sturmian word with the same parameters.

 Clearly, lower (upper)  Sturmian words  corresponding to the  triplets $(\ell_0,\ell_1,\rho)$ and $(c\ell_0,c\ell_1,c\rho)$ coincide for any positive constant $c$.
It is the reason for the triplet of parameters  $\ell_0,\ell_1,\rho$ to be often normalized into the form $$\Bigl(\frac{\ell_0}{\ell_0+\ell_1}, \frac{\ell_1}{\ell_0+\ell_1},\frac{\rho}{\ell_0+\ell_1}\Bigr)=(1-\gamma,\gamma,\delta), $$ where $\gamma$ is the slope.
 The lower Sturmian  word with parameters $(1-\gamma,\gamma,\delta) $ where   $\delta \in [0,1)$ is in \cite{Lo2} denoted by $\ss_{\gamma, \delta }$ and
  the upper  Sturmian  word with parameters $(1-\gamma,\gamma,\delta) $ where  $\delta  \in (0,1]$ is  denoted by
 ${{\ss^\prime}}_{\gamma, \delta}$.

%
%

The language $\L(\uu)$ of a Sturmian word $\uu$ is independent of the parameter   $\rho$,  it depends only on the slope $\gamma=\frac{\ell_1}{\ell_0+\ell_1}$.
Any Sturmian word is uniformly recurrent.  Frequencies  of the letters $0$ and $1$ are $1-\gamma$ and $\gamma$,  respectively.
Among all Sturmian words with a  fixed irrational slope $\gamma=\frac{\ell_1}{\ell_0+\ell_1}$,   the sequence with the triplet of parameters
$(\ell_0, \ell_1, \ell_1)$   plays a special role.
Such a sequence is called a \textit{standard Sturmian word} and it is usually denoted by  ${\bf c}_\gamma$.
Any prefix of ${\bf c}_\gamma$ is a left special factor.
In other words, a Sturmian word $\uu \in \{0,1\}^\N$ is standard if both sequences $0\uu$, $1\uu$ are Sturmian.

The \emph{shift operator} $\sigma$ maps an infinite word $\uu = u_0u_1u_2\dots$ to the word $\sigma(\uu) = u_1u_2u_3\dots$, i.e., $\sigma$ erases the starting letter of the word $\uu$. If $\uu$  is a Sturmian word coding an initial point $\rho$  under a two interval exchange transformation $T$, then   $\sigma(\uu)$  is coding of the initial point  $T(\rho)$.

\begin{obs}\label{shift}
If $\uu$ is  a lower Sturmian word with parameters $\ell_0,\ell_1$, and $\rho$, then  $\sigma(\uu)$  is a lower Sturmian word  with parameters $\ell_0,\ell_1$, and $\rho'$, where
\[
\rho'= \begin{cases}  \rho +\ell_1 & \text{ if }  \rho \in [0,\ell_0), \\
\rho -\ell_0 & \text{ if } \rho \in [\ell_0, \ell_0+\ell_1 ).
\end{cases}
\]

If $\uu$  is  an upper  Sturmian word with parameters $\ell_0,\ell_1$, and $\rho$, then  $\sigma(\uu)$  is an upper  Sturmian word  with parameters $\ell_0,\ell_1$, and $\rho'$, where
\[
\rho'= \begin{cases}  \rho +\ell_1 & \text{ if }  \rho \in (0,\ell_0], \\
\rho -\ell_0 & \text{ if } \rho \in (\ell_0, \ell_0+\ell_1].
\end{cases}
\]
\end{obs}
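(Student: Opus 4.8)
The plan is to unwind the definition of the 2iet sequence and to observe that applying the shift $\sigma$ corresponds exactly to advancing the initial point one step under the transformation $T$. First I would recall that the $n$-th letter $u_n$ of $\uu$ is $0$ or $1$ according to whether $T^n(\rho)$ lies in $I_0$ or $I_1$. Since $\sigma(\uu) = u_1u_2u_3\cdots$, its $n$-th letter is $u_{n+1}$, which is governed by $T^{n+1}(\rho)$. Writing $T^{n+1}(\rho) = T^n\bigl(T(\rho)\bigr)$ and setting $\rho' = T(\rho)$, I would conclude that the $n$-th letter of $\sigma(\uu)$ is $0$ if $T^n(\rho') \in I_0$ and $1$ if $T^n(\rho') \in I_1$. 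This is precisely the defining rule for the 2iet sequence with initial point $\rho'$ and unchanged parameters $\ell_0, \ell_1$, so $\sigma(\uu)$ is again a lower (resp.\ upper) Sturmian word with parameters $\ell_0, \ell_1, \rho'$.

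It then remains to compute $\rho' = T(\rho)$ explicitly. By the formula defining the transformation, $T(\rho) = \rho + \ell_1$ when $\rho \in I_0$ and $T(\rho) = \rho - \ell_0$ when $\rho \in I_1$. Substituting the interval descriptions for the two cases --- the half-open intervals $[0,\ell_0)$ and $[\ell_0,\ell_0+\ell_1)$ in the lower case, and $(0,\ell_0]$ and $(\ell_0,\ell_0+\ell_1]$ in the upper case --- yields exactly the two displayed case distinctions for $\rho'$. Because $T$ maps $I$ into $I$, the point $\rho'$ lies again in the same interval $I$, so the resulting sequence genuinely has the same interval convention and the stated parameters.

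The only point requiring any care --- and it is bookkeeping rather than a genuine obstacle --- is keeping the boundary conventions consistent between the lower and upper situations: one must check that the half-open interval used to test membership of $\rho$ matches the convention $I=[0,\ell_0+\ell_1)$ or $I=(0,\ell_0+\ell_1]$ defining the corresponding Sturmian word, so that $T(\rho)$ and the partition against which it is compared use the same endpoints. Confirming this agreement for both rows of each case distinction completes the argument; no limiting procedure or estimate is involved, since the statement is a direct translation of the already-noted fact that $\sigma$ acts on the coded orbit by one application of $T$.
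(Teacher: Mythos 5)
Your proof is correct and follows exactly the route the paper takes: the paper states (just before the observation) that $\sigma(\uu)$ codes the orbit of $T(\rho)$, and the observation is then the explicit case-by-case evaluation of $T(\rho)$ on $I_0$ and $I_1$ under each boundary convention. Your write-up simply spells out this same argument in full detail, including the bookkeeping on half-open intervals, and nothing is missing.
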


\section{Sturmian morphisms} \label{sec:sturmmor}

In this article, we work with these four elementary morphisms:
\begin{equation} \label{eq:elem}
\varphi_a: \begin{cases} 0 \to 0 \\ 1 \to 10 \end{cases}, \quad \quad
	\varphi_b: \begin{cases} 0 \to 0 \\ 1 \to 01 \end{cases}, \quad \quad
	\varphi_\alpha: \begin{cases} 0 \to 01 \\ 1 \to 1 \end{cases}, \quad \quad
	\varphi_\beta: \begin{cases} 0 \to 10 \\ 1 \to 1 \end{cases}.
\end{equation}
Each of these 4 morphisms is a so-called Sturmian morphism, that is a morphism such that any its image of a Sturmian word is a again a Sturmian word.
Moreover, the morphisms $\varphi_b$ and $\varphi_\beta$ map a standard Sturmian word to a standard Sturmian word and are thus called \emph{standard Sturmian morphisms}.
Let $\mathcal{M}$ be the monoid generated by the four morphisms, i.e. $\mathcal{M} = \langle \varphi_a, \varphi_b, \varphi_\alpha, \varphi_\beta \rangle $.  The monoid $\mathcal{M}$ is usually called the \emph{special Sturmian monoid}.
For a non-empty word $w = w_0\cdots w_{n-1}$ over the alphabet $\{a,b,\alpha,\beta\}$ we set
\[
 \varphi_w = \varphi_{w_0} \varphi_{w_1} \cdots \varphi_{w_{n-1}}.
\]
Each morphism $\varphi_w$ maps a lower (upper) Sturmian word to a lower (upper) Sturmian word.  A morphism $\varphi_w$  is  primitive if and only if $w$ contains at least one Latin letter and at least one Greek letter.
If $\varphi_w$ is primitive, then $\varphi_w$ is a substitution.
To obtain the monoid of all Sturmian morphisms we have to extend the set of generators by the morphism $E: 0 \mapsto 1, 1 \mapsto 0 $.  This morphism maps a lower (upper) Sturmian word to an upper (lower) Sturmian word. If $\psi$ is a Sturmian morphism from the monoid $\langle E, \varphi_a, \varphi_b, \varphi_\alpha, \varphi_\beta \rangle $, then $\psi^2\in \mathcal{M}$.

The four elementary morphisms  $ \varphi_a, \varphi_b, \varphi_\alpha, \varphi_\beta$ serve as a basis for a representation of any Sturmian word.

\begin{thm}[\cite{JuPi}] \label{Sadic}
An infinite binary word $\uu$ is Sturmian if and only if there exists an infinite word ${\bf w}=w_0w_1w_2\cdots$ over the alphabet $\{a,b,\alpha, \beta\}$ and an infinite sequence $(\uu_i)_{i\geq 0}$ of Sturmian words such that  $\uu = \uu_0$ and
$\uu_{i} = \varphi_{w_i}(\uu_{i + 1})$ for all $i \in \mathbb{N}$.
\end{thm}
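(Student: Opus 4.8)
The ``if'' direction is immediate: the sequence $(\uu_i)$ is required to consist of Sturmian words and $\uu=\uu_0$, so $\uu$ is Sturmian. (Equivalently, since each of the four elementary morphisms maps Sturmian words to Sturmian words, $\uu=\varphi_{w_0}\varphi_{w_1}\cdots\varphi_{w_{i-1}}(\uu_i)$ is Sturmian for every $i$.) The content of the theorem is therefore the ``only if'' direction, which I would prove by constructing $\mathbf w$ and $(\uu_i)$ one letter at a time through a \emph{desubstitution} procedure.

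The key step is a desubstitution lemma: every Sturmian word $\vv$ can be written as $\vv=\varphi_x(\vv')$ for some $x\in\{a,b,\alpha,\beta\}$ and some Sturmian word $\vv'$. To establish it I would first recall that a Sturmian word has exactly $n+1$ factors of each length $n$, so among the four words of length $2$ exactly one is absent from $\L(\vv)$; since $01$ and $10$ must both occur (otherwise $\vv$ could not be both aperiodic and uniformly recurrent), the missing factor is either $00$ or $11$. If $11\notin\L(\vv)$, then every occurrence of $1$ is immediately followed by $0$, so $\vv$ admits a unique factorization into the blocks $0$ and $10$; contracting $10\mapsto 1$ and $0\mapsto 0$ realizes $\vv=\varphi_a(\vv')$ (and, when $\vv$ begins with $0$, also $\vv=\varphi_b(\vv')$ via the factorization into $0$ and $01$). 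Symmetrically, if $00\notin\L(\vv)$ one uses $\varphi_\alpha$ or $\varphi_\beta$. Thus a desubstitution is always available.

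It remains to check that the desubstituted word $\vv'$ is again Sturmian, and here I would use the interval-exchange description from \Cref{sec:sturm}. Writing $\vv$ as the coding of the orbit of some $\rho$ under a 2iet $T$ with lengths $\ell_0,\ell_1$, the case $11\notin\L(\vv)$ corresponds to $\ell_1<\ell_0$; inducing $T$ on the subinterval obtained by removing the shorter atom yields a new 2iet with lengths $(\ell_0-\ell_1,\ell_1)$, whose coding is exactly the contraction $\vv'$ described above and whose slope equals $\ell_1/\ell_0=\gamma/(1-\gamma)$. Since $\gamma$ is irrational, so is this slope, and hence $\vv'$ is Sturmian; the domain convention (lower versus upper, \Cref{shift}) and the choice between the two admissible letters are fixed by the position of $\rho$ relative to the cut. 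The case $00\notin\L(\vv)$ is handled symmetrically, giving slope $2-1/\gamma$.

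With the lemma in hand I would set $\uu_0=\uu$ and inductively let $w_i\in\{a,b,\alpha,\beta\}$ and $\uu_{i+1}$ be the letter and Sturmian word produced by the lemma applied to $\uu_i$, so that $\uu_i=\varphi_{w_i}(\uu_{i+1})$. Because each $\uu_i$ is Sturmian it is aperiodic, so the lemma applies at every stage and the procedure never halts, yielding the required infinite word $\mathbf w$ and infinite sequence $(\uu_i)$. I expect the main obstacle to be the desubstitution lemma itself, namely verifying that the combinatorial contraction coincides with the coding of the induced interval exchange and that the induced slope remains irrational; once this identification is secured, the fractional-linear maps $\gamma\mapsto\gamma/(1-\gamma)$ and $\gamma\mapsto 2-1/\gamma$ guarantee both Sturmianity at each step and non-termination.
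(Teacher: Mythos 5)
The paper does not prove this statement: it is quoted verbatim from Justin--Pirillo \cite{JuPi}, so there is no internal proof to compare against. Your argument is the standard desubstitution proof of the S-adic characterization, and it is essentially correct: the ``if'' direction is indeed vacuous as the theorem is phrased here (the $\uu_i$ are \emph{required} to be Sturmian), and the ``only if'' direction reduces, exactly as you say, to the one-step desubstitution lemma. Your combinatorial part is sound --- aperiodicity forces $01,10\in\L(\vv)$, so the unique missing length-$2$ factor is $00$ or $11$, and the resulting unique block factorization into $\{0,10\}$ (or $\{0,01\}$ when $\vv$ starts with $0$), respectively $\{1,01\}$ or $\{1,10\}$, realizes $\vv=\varphi_x(\vv')$ for an explicit $x$. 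You correctly isolate the only point needing real care, namely that the contracted word $\vv'$ is again Sturmian; your route via the induced two-interval exchange is consistent with the paper's own bookkeeping (inverting \Cref{image} sends parameters $(\ell_0,\ell_1,\rho)$ to $(\ell_0-\ell_1,\ell_1,\rho)$ or $(\ell_0-\ell_1,\ell_1,\rho-\ell_1)$ according as one peels off $\varphi_b$ or $\varphi_a$, and the constraint that the new intercept lie in the new domain is precisely what selects the admissible letter, matching \Cref{shift}), and the slope recursions $\gamma\mapsto\gamma/(1-\gamma)$ and $\gamma\mapsto 2-1/\gamma$ preserve irrationality, so the induction never halts. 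A purely combinatorial alternative (showing directly that the contraction is balanced and aperiodic, hence Sturmian) would avoid the identification of the contraction with the first-return coding, but that identification is routine and your sketch of it is accurate.
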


Usually, the sequence $(\varphi_{w_i})$ is called an \emph{ S-adic representation} of $\uu$.
In our context, the sequence $(w_i)$ shall be simply called an S-adic representation of $\uu$.

The monoid $\mathcal{M}$ is a proper submonoid of the monoid of all Sturmian morphisms and it is not free.
It is easy to show that for any $k \in \N$ we have
\[
\varphi_{\alpha a^k\beta } = \varphi_{\beta b^k\alpha}\quad \text{ and } \quad \varphi_{a\alpha^kb} = \varphi_{b\beta^ka}.
\]
In fact, these rules give the presentation of the monoid:
\begin{thm}[\cite{See91,ReKa07}] \label{thm:relations}
Let $w,v\in \{a,b,\alpha,\beta\}^*$.
The morphism $\varphi_w$ equals $\varphi_{v}$ if and only if the word $v$ can be obtained from $w$ by possibly repeated application of the rewriting rules
\begin{equation}\label{eq:relations}
  \alpha a^k\beta = \beta b^k\alpha \quad \text{ and } \quad  a\alpha^kb = b\beta^ka\qquad \text{ for any $k \in \N$ }.
\end{equation}
\end{thm}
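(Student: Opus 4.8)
The plan is to prove the two implications separately. The forward implication, that rewriting preserves the morphism, is routine: I would check the identities $\varphi_{\alpha a^k\beta}=\varphi_{\beta b^k\alpha}$ and $\varphi_{a\alpha^kb}=\varphi_{b\beta^ka}$ by evaluating both sides on $0$ and $1$ (a one-line induction on $k$, or the explicit computation announced just before the statement), and then observe that substituting one equal factor for another inside $w$ does not alter $\varphi_w$; hence any $v$ obtained from $w$ by the rules satisfies $\varphi_v=\varphi_w$. The whole weight of the theorem lies in the converse, for which a useful first reduction is abelianization: the incidence matrices of $\varphi_a,\varphi_b$ both equal $R=\left(\begin{smallmatrix}1&1\\0&1\end{smallmatrix}\right)$ and those of $\varphi_\alpha,\varphi_\beta$ both equal $L=\left(\begin{smallmatrix}1&0\\1&1\end{smallmatrix}\right)$; since the monoid generated by $L$ and $R$ (all nonnegative integer matrices of determinant $1$) is free, $\varphi_w=\varphi_v$ forces $w$ and $v$ to have the same Latin/Greek skeleton, and in particular $|w|=|v|$.

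For the converse I would then induct on $|w|$, recovering the last letter of $w$ (the innermost elementary morphism) from $\varphi_w$ by a desubstitution step. Writing $(p,q)=(|\varphi_w(0)|,|\varphi_w(1)|)$, one checks on generators that a final Latin letter gives $q>p$ and a final Greek letter gives $p>q$, so for nonempty $w$ one has $p\neq q$ and the \emph{type} of the last letter is forced by $\varphi_w$ alone; by the skeleton reduction $w$ and $v$ share this type. Assuming say $p>q$ (Greek), one has $\varphi_w=\varphi_{w'}\varphi_\alpha$ precisely when $\varphi_w(1)$ is a suffix of $\varphi_w(0)$, and $\varphi_w=\varphi_{w''}\varphi_\beta$ precisely when $\varphi_w(1)$ is a prefix of $\varphi_w(0)$, the desubstituted morphism arising by deleting that occurrence of $\varphi_w(1)$ from $\varphi_w(0)$. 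If $\varphi_w(1)$ is a suffix but not a prefix of $\varphi_w(0)$ (or vice versa) the exact last letter is determined, so $w$ and $v$ end with the same letter; deleting it yields $\varphi_{w'}=\varphi_{v'}$ with shorter words, and the induction hypothesis closes this case.

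The delicate case is when $\varphi_w(1)$ is simultaneously a prefix and a suffix of $\varphi_w(0)$, i.e.\ a \emph{border}: then both desubstitutions are available and $w,v$ may legitimately end in different letters (one $\alpha$, one $\beta$). My claim, which is the core of the argument, is that a border forces $w$ to terminate in a complete relation block: if $w$ ends in $\beta$, then in fact $w=z\,\alpha a^k\beta$ for some $z$ and some $k\ge 0$, so the rule $\alpha a^k\beta=\beta b^k\alpha$ rewrites $w$ into a word $w^*\sim w$ ending in $\alpha$. Replacing $w$ by $w^*$ puts us back in the determined case (both $w^*$ and $v$ ending in $\alpha$), and deleting that final $\alpha$ reduces the length so that the induction applies. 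Assembling the chain $w\sim w^*\sim v$ then gives the desired congruence.

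The main obstacle is exactly this border lemma. Proving it is a periodicity analysis of the pair $\bigl(\varphi_w(0),\varphi_w(1)\bigr)$: the border relation asserts that $\varphi_w(0)$ has period $|\varphi_w(0)|-|\varphi_w(1)|$, and I would iterate the desubstitution while tracking this period, expecting it to peel off exactly a run $a^k$ (respectively $b^k$) capped by a single $\alpha$ (respectively $\beta$) — precisely the block the relation consumes. I anticipate the Fine--Wilf theorem, combined with the explicit prefix/suffix bookkeeping for the four generators, to be the technical engine, and the bulk of the case analysis to lie in the boundary value $k=0$ and in the symmetric Latin situation ($q>p$, handled by $a\alpha^kb=b\beta^ka$).
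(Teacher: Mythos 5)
The paper does not prove this statement --- it is imported from \cite{See91,ReKa07} --- so there is no in-house argument to compare yours against; I can only judge the proposal on its own terms. Your architecture is sound and is essentially the one used in the literature. The forward direction is indeed a finite check of $\varphi_{\alpha a^k\beta}=\varphi_{\beta b^k\alpha}$ and $\varphi_{a\alpha^kb}=\varphi_{b\beta^ka}$ on the letters $0$ and $1$. The abelianization step is correct and does real work: the Latin generators both have incidence matrix $R$ and the Greek ones $L$, the monoid generated by $L$ and $R$ in $SL_2(\mathbb{N})$ is free, so $\varphi_w=\varphi_v$ forces identical Latin/Greek skeletons and $|w|=|v|$, which is exactly the invariant the rules \eqref{eq:relations} preserve. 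The desubstitution induction is also valid where you apply it: if $w$ and $v$ end in the same letter, say $\alpha$, then $\varphi_{w'}(0)\varphi_{w'}(1)=\varphi_{v'}(0)\varphi_{v'}(1)$ together with $\varphi_{w'}(1)=\varphi_{v'}(1)$ gives $\varphi_{w'}=\varphi_{v'}$, and the congruence is compatible with appending a letter, so the induction closes.

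The gap is that the entire content of the ``only if'' direction has been concentrated into the border lemma, which you state but do not prove. The statement is true (and is, in effect, equivalent to the completeness of the presentation), but your plan for it --- ``iterate the desubstitution while tracking this period, expecting it to peel off exactly a run'' --- is a heuristic, not an argument. Concretely: writing $w=w''\beta$, $u=\varphi_{w''}(1)$, $s=\varphi_{w''}(0)$, the border condition says $u$ is a suffix of $us$, i.e.\ $us=tu$ with $|t|=|s|$, hence by the conjugacy lemma $t=pq$, $s=qp$, $u=(pq)^ip$ for some words $p,q$. To convert this periodicity into ``$w''$ ends in $\alpha a^k$'' you must exclude every other possible ending of $w''$ (ending in $b$, in $\beta$, in a bare power of $a$, or empty), and for that you need structural input about the pairs $\bigl(\varphi_{w''}(0),\varphi_{w''}(1)\bigr)$ --- for instance that they never commute (which follows from the determinant-one incidence matrix) --- fed into an induction on $w''$ that tracks which of the prefix/suffix relations persist under removal of the last letter. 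None of that is carried out; Fine--Wilf is invoked only as an anticipation. Since this lemma is precisely the point where the two defining relations emerge and where one certifies that no further relations are needed, the proposal as written is a correct roadmap whose hardest waypoint is unvisited rather than a proof.
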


Note that the rules~\eqref{eq:relations} preserve  positions in $w \in  \{a,b,\alpha,\beta\}^*$ of Latin and Greek letters.
Thus, by setting $a<b$ and $\alpha < \beta$ we may define a lexicographic order on all equivalent words in $\{a,b,\alpha,\beta\}^*$.
 In~\cite{KlMePeSt18}, this lexicographic order allowed to describe the derived words with respect to the prefixes of Sturmian words.
 It is the reason to use a non-traditional notation of the elementary morphisms, which are in~\cite{Lo2} denoted as follows:
\begin{equation}\label{LothairJinak}
\varphi_b = G, \quad  \varphi_a = \widetilde{G}, \quad \varphi_\beta = D, \quad  \varphi_\alpha = \widetilde{D}.
\end{equation}
\begin{defi}
Let $w \in  \{a,b,\alpha,\beta\}^*$.
The lexicographically largest word in $  \{a,b,\alpha,\beta\}^*$ which  can be obtained from $w$ by application of rewriting rules \eqref{eq:relations} is denoted $N(w)$.
If  $\psi = \varphi_w$,  then the word $N(w)$ is the \emph{normalized name} of the morphism $\psi$  and it is also denoted by $N(\psi) = N(w)$.
\end{defi}

The next lemma is a direct consequence of Theorem \ref{thm:relations}.
\begin{lem}\label{lem:normalized_words}
Let $w  \in \{a, b, \alpha, \beta\}^*$.
We have $w = N(w)$ if and only if $w$ does not contain $\alpha a^k\beta$ or $a\alpha^kb$ as a factor for any $k \in \mathbb{N}$.
In particular,  if $w \in \{ a, b, \alpha, \beta\}^* \setminus \{a, \alpha\}^*$, the  normalized name $N(w)$ has prefix either $a^i\beta$ or $\alpha^ib$ for some $i \in \mathbb{N}$.
\end{lem}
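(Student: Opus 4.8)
The plan is to read the two relations in \eqref{eq:relations} as a one-directional rewriting system, oriented so that each rewrite step strictly increases the lexicographic order fixed by $a<b$ and $\alpha<\beta$. Concretely, I would orient them as $\alpha a^k\beta \to \beta b^k\alpha$ and $a\alpha^k b \to b\beta^k a$. Since by the remark preceding \Cref{thm:relations} the rules preserve the positions of Latin and Greek letters, all words in one equivalence class share the same Latin/Greek position pattern, so the lexicographic order restricted to a class is total. In each rule the leftmost letter of the redex strictly increases ($\alpha\mapsto\beta$ or $a\mapsto b$), and the two words agree up to the start of the redex, so every rewrite step produces a lexicographically larger word in the same $\varphi$-class (\Cref{thm:relations}). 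As the rules preserve length, each class is finite, a strictly increasing chain in a finite total order is finite, and hence this oriented system is terminating. Its normal forms, the words to which no rule applies, are by definition exactly those containing neither $\alpha a^k\beta$ nor $a\alpha^k b$ as a factor, for any $k\in\N$.

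The crux is that this system is confluent, and for that the plan is to show it has no nontrivial critical pairs, i.e.\ that no two redexes can overlap. Here I would check that the two left-hand-side families $\alpha a^k\beta$ and $a\alpha^k b$ admit neither self-overlaps nor mutual overlaps, and that neither is a factor of the other. The point is structural: an occurrence of $\alpha a^k\beta$ begins and ends with a Greek letter (namely $\alpha$ then $\beta$) and carries only $a$'s in between, while $a\alpha^k b$ begins and ends with a Latin letter (namely $a$ then $b$) and carries only $\alpha$'s in between. Consequently any position shared by two redexes forces either a Latin/Greek type clash, or an $\alpha$-versus-$\beta$ clash at a Greek position, or an $a$-versus-$b$ clash at a Latin position; a short case split over prefix/suffix coincidences and containments rules out every overlap. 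Thus the only redexes that can coexist occupy disjoint intervals, and such redexes commute, so the system is locally confluent. By Newman's lemma, termination together with local confluence yields confluence, and hence every equivalence class has a \emph{unique} normal form.

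It then remains to identify this unique normal form with $N(w)$. Because every rewrite step increases the lexicographic order and every word reduces to the normal form of its class, that normal form is larger than or equal to every word equivalent to it; it is therefore the lexicographically largest element of the class, which is precisely $N(w)$. Combining this with the description of normal forms from the first paragraph gives the first assertion at once: $w=N(w)$ holds if and only if $w$ is a normal form, if and only if $w$ contains neither $\alpha a^k\beta$ nor $a\alpha^k b$ as a factor for any $k\in\N$.

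For the ``in particular'' statement, I would first observe that a word lying in $\{a,\alpha\}^*$ contains no letter $b$ or $\beta$, and hence contains none of the four patterns in \eqref{eq:relations} (each of $\alpha a^k\beta$, $\beta b^k\alpha$, $a\alpha^k b$, $b\beta^k a$ uses a $b$ or a $\beta$); such a word is therefore alone in its equivalence class, so $w\notin\{a,\alpha\}^*$ forces $N(w)\notin\{a,\alpha\}^*$. Now consider the first position of $N(w)$ carrying a letter from $\{b,\beta\}$, with the preceding prefix lying in $\{a,\alpha\}^*$. If that letter is $\beta$ and the prefix contained an $\alpha$, then taking the last such $\alpha$ exhibits a factor $\alpha a^i\beta$, contradicting that $N(w)$ is a normal form; hence the prefix is $a^i$ and $N(w)$ begins with $a^i\beta$. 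Symmetrically, if the first such letter is $b$, the prefix must be $\alpha^i$ and $N(w)$ begins with $\alpha^i b$. The hard part of the whole argument is the overlap analysis establishing local confluence in the second paragraph; once that is in hand, everything else is bookkeeping on top of \Cref{thm:relations}.
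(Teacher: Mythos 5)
Your proposal is correct. The paper offers no argument here at all: it simply declares the lemma ``a direct consequence of Theorem~\ref{thm:relations}''. What you supply is a complete justification of the one direction that is not immediate. The easy direction is that a word containing a redex $\alpha a^k\beta$ or $a\alpha^k b$ cannot equal $N(w)$, since rewriting it produces a lexicographically larger equivalent word; the genuinely nontrivial direction is that a word with \emph{no} redex is already the lexicographic maximum of its class, and your termination-plus-confluence argument is exactly what is needed for that. Your overlap analysis is sound: every proper suffix of $\alpha a^k\beta$ other than the full word lies in $a^*\beta$ while every proper prefix of $a\alpha^jb$ lies in $a\alpha^*$, and the symmetric checks go through the same way, so there are no critical pairs; together with termination (lex order is total on each class because the rules preserve the Latin/Greek pattern, and each class is finite because the rules preserve length), Newman's lemma gives a unique normal form, which must then be the lex-largest element, i.e.\ $N(w)$. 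Your derivation of the ``in particular'' clause --- words in $\{a,\alpha\}^*$ are isolated in their classes since every side of every relation contains a $b$ or a $\beta$, and the maximal $\{a,\alpha\}^*$-prefix of a normal form cannot mix $\alpha$ with a following $\beta$ nor $a$ with a following $b$ --- is also correct. In short, you have proved more carefully something the authors assert without proof; the only stylistic caveat is that the confluence machinery is heavier than strictly necessary (one could instead argue directly that from any redex-free word no lex-larger equivalent word is reachable), but the argument as written is valid.
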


The following definition is helpful in the description of derived words with respect to prefixes.

\begin{defi} \label{def:delta}
	Let $w\in  \{a,b,\alpha,\beta\}^*\setminus \{a,\alpha\}^*$ be the normalized name of a morphism $\psi$. Put
	\[
		\Delta(w) =
			\begin{cases}
				N( w'a^k\beta) & \text{ if \ } w = a^k\beta w', \\
				N(w'\alpha^kb) & \text{ if \ } w = \alpha^kb w'
			\end{cases}
	\]
	with $k \in \N$.
	For $\psi = \varphi_w$ we set
	\[
		\Delta (\psi) = \varphi_{\Delta(w)}.
	\]
\end{defi}

\begin{example} \label{ex:N}
Let $v = aa\beta \alpha \beta \beta a$.
We have $N(v) = aa\beta\beta\beta\alpha a$ and $\Delta(N(v)) = N(\beta\beta\alpha aa\beta\beta) = \beta\beta \beta bb \beta \alpha $.
\end{example}

\begin{remark}\label{vsechny4}
Let us point out several properties of the operation $\Delta$.
Assume $w \in \{a,b,\alpha,\beta\}^*\setminus \{a,\alpha\}^*$ such that $\varphi_w$ is primitive.

\begin{enumerate}[(I)]
\item \label{it:vsechny4:1} If only two letters from $\{a,b,\alpha,\beta\}$  occur in  $w$,  i.e., $w \in \{b,\beta\}^* \cup  \{b,\alpha\}^*\cup
 \{a,\beta\}^*$,  then $\Delta(w)$ is  an iteration of the cyclic shift.

\item \label{it:vsechny4:2}
  The function $\Delta$ preserves the length of a word.
  The number of the letters $b$ and $\beta$ in the  normalized name   $N(w)$  of a word $w$ is  never smaller than the number of these letters in $w$.
  Thus the sequence  $\Delta^i(w)$  of iterations of $\Delta$  is eventually periodic and for each sufficiently large index $i$, $\Delta^{i+1}(w)$ of can be computed from $\Delta^i(w)$  by cyclic shift without using normalization.

 \item \label{it:vsechny4:3} If $w$ contains at least one letter from $\{a, \alpha\}$,  then  the form of rewriting rules  \eqref{eq:relations} implies that  $N(w)$ and $\Delta(w)$  contains at least one letter from  $\{a, \alpha\}$.

\item \label{it:vsechny4:4}
 If $w$  contains at least 3 letters from   $\{a,b,\alpha,\beta\}$, then  by Example 31 of \cite{KlMePeSt18}, $\Delta^i(w)$ contains both letters $\beta$  and $b$ for each sufficiently large  $i$.
\end{enumerate}
\end{remark}
\begin{thm}[\cite{KlMePeSt18}]\label{thm:main_result_vetsina}
	Let $\psi \in \langle\varphi_a, \varphi_b, \varphi_\alpha, \varphi_\beta\rangle$ be a primitive morphism and $ N(\psi) = w \in \{a,b,\alpha,\beta\}^* \setminus \{a, \alpha\}^*$ be its normalized name.
	If $\uu$ is the fixed point of $\psi$,
	then $\mathbf{x}$ is (up to a permutation of letters) a derived word of $\uu$ with respect to one of its prefixes if and only if $\mathbf{x}$ is the fixed point of the morphism $\Delta^j(\psi)$ for some $j\in \mathbb{N}$.
\end{thm}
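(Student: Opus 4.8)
The plan is to exploit the self-similarity in the equation $\uu = \varphi_w(\uu)$ and to show that one application of $\Delta$ amounts to replacing $\uu$ by the derived word of its first letter. Write $w = N(\psi)$ in normalized form; by \Cref{lem:normalized_words} it begins with a syllable $s \in \{a^k\beta,\alpha^k b\}$, so $w = s\,w'$. Since $\varphi_s$ is the leading factor of $\varphi_w = \varphi_s\varphi_{w'}$, the fixed-point equation factors as $\uu = \varphi_s(\vv)$ with $\vv := \varphi_{w'}(\uu)$, and then $\vv = \varphi_{w'}\varphi_s(\vv) = \varphi_{w's}(\vv)$. By \Cref{thm:relations} normalization does not change the morphism, so $\varphi_{w's} = \varphi_{N(w's)} = \varphi_{\Delta(w)}$; thus $\vv$ is the fixed point of $\Delta(\psi)$. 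This recursion drives the whole argument.

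For the base step I would compute the leading syllable explicitly: $\varphi_{a^k\beta}(0) = 10^{k+1}$ and $\varphi_{a^k\beta}(1) = 10^{k}$ (and symmetrically $\varphi_{\alpha^k b}(0) = 01^{k}$, $\varphi_{\alpha^k b}(1) = 01^{k+1}$, with the roles of $0$ and $1$ exchanged). In either case each of the two blocks $\varphi_s(0),\varphi_s(1)$ begins with the first letter $u_0$ of $\uu$ and contains it exactly once. Hence the factorization $\uu = \varphi_s(\vv)$ is precisely the decomposition of $\uu$ into return words of $u_0$, and recording which block occurs recovers $\vv$. Therefore the derived word of $\uu$ with respect to its one-letter prefix equals, up to relabeling, the fixed point of $\Delta(\psi) = \Delta^1(\psi)$, while the empty prefix gives $\uu$ itself, the fixed point of $\Delta^0(\psi) = \psi$.

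Next I would iterate. Using the composition principle recalled in the introduction — a derived word taken with respect to a prefix of $\dd_\uu(p)$ is again a derived word $\dd_\uu(p')$ of $\uu$ for some longer prefix $p'$ — the base step applied to $\vv$, then to the first-letter derived word of $\vv$, and so on, realizes the fixed point of $\Delta^{j}(\psi)$ as $\dd_\uu(p)$ for a suitable prefix $p$ and every $j \ge 0$; this is the ``if'' direction. By \Cref{vsechny4} the sequence $\bigl(\Delta^i(w)\bigr)_i$ is eventually periodic, so only finitely many distinct fixed points occur, in agreement with the finiteness in Durand's theorem \cite{Durand98}. For the ``only if'' direction I would show that every prefix $p$ of $\uu$ falls into one of finitely many ranges, delimited by the prefixes at which the set of return words changes, and that these thresholds are exactly the $\varphi_s$-images of the corresponding thresholds of $\vv$; an induction on the S-adic depth (\Cref{Sadic}) then gives that $\dd_\uu(p)$ is constant on each range and equals the fixed point of the associated $\Delta^{j}(\psi)$.

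The main obstacle is this last synchronization. One cannot simply assert $\dd_\uu(\varphi_s(p')) = \dd_\vv(p')$: a direct check shows that for a short prefix $p'$ the return words of $\varphi_s(p')$ in $\uu$ need not be the $\varphi_s$-images of the return words of $p'$ in $\vv$, so the correspondence between the length of $p$ and the exponent $j$ must be established with care. I would control it through the bispecial prefixes of $\uu$, or equivalently through the first-return maps to the nested intervals of points whose itinerary under the underlying two-interval exchange begins with $p$ (cf.\ \Cref{sec:sturm}); these induced maps are again two-interval exchanges, so each prefix has exactly two return words and $\dd_\uu(p)$ is itself Sturmian \cite{Vu}. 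The delicate points are to prove that passing from one bispecial prefix to the next corresponds to exactly one desubstitution by the leading syllable, that $\dd_\uu(p)$ is invariant as $p$ ranges over the intervening block, and that this desubstitution is implemented on names precisely by $\Delta$ followed by the normalization $N$ — with \Cref{thm:relations} guaranteeing that $N$ leaves the morphism, and hence the derived word, unchanged while producing the canonical name required to iterate.
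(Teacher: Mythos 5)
This statement is imported: the paper quotes it from \cite{KlMePeSt18} and gives no proof of its own, so the only internal point of comparison is \Cref{prop:der_of_preimage_fi_b}, which is precisely the kind of one-step desubstitution lemma your argument turns on. Your skeleton is the right one and matches the strategy of the cited source. The factorization $\uu=\varphi_s(\vv)$ with $\vv=\varphi_{w's}(\vv)=\varphi_{\Delta(w)}(\vv)$ is correct (\Cref{thm:relations} justifies replacing $w's$ by $N(w's)$ without changing the morphism), the block computation $\varphi_{a^k\beta}(0)=10^{k+1}$, $\varphi_{a^k\beta}(1)=10^{k}$ is correct, and since each block contains exactly one occurrence of $u_0$, at its start, the derived word to the one-letter prefix is indeed the fixed point of $\Delta(\psi)$. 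The ``if'' direction then follows by iterating, provided you also record (via \Cref{vsechny4} and the remark that the rewriting rules preserve the positions of Latin and Greek letters) that $\Delta^j(w)$ stays primitive and outside $\{a,\alpha\}^*$, so the leading-syllable decomposition applies at every level and each $\Delta^j(\psi)$ has a unique fixed point.

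The genuine gap is the ``only if'' direction, which you describe as a plan but do not execute. What is needed is the general-syllable analogue of \Cref{prop:der_of_preimage_fi_b} \emph{together with its converse}: (i) for every non-empty right special prefix $p'$ of $\vv$ one has $\dd_{\vv}(p')=\dd_{\uu}(p)$ for an explicit prefix $p=\varphi_s(p')t$ of $\uu$, where the correction word $t$ is exactly the point you flag (compare the appended $0$ in \Cref{prop:der_of_preimage_fi_b}); and (ii) every right special prefix of $\uu$ beyond the first few arises in this way, while the short non-empty ones all yield $\dd_{\uu}(p)=\vv$ and the empty one yields $\uu$. Item (ii) is indispensable: without it you have not excluded that some prefix of $\uu$ produces a derived word outside the $\Delta$-orbit, and \Cref{prolong} only reduces the problem to right special prefixes, it does not classify them. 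Your proposed route through bispecial prefixes and induced two-interval exchanges is viable, but as written it asserts rather than proves that passing from one bispecial prefix to the next corresponds to exactly one desubstitution by $\varphi_s$; that synchronization claim is the entire content of the theorem and is where the work of \cite{KlMePeSt18} actually lies.
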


\begin{example}[\Cref{ex:N} continued]
Taking $w = aa\beta\beta\beta\alpha a$, we have
\[
\begin{aligned}
\Delta(w) & = \beta\beta \beta bb \beta \alpha, & \Delta^7(w) & =   \beta \beta \beta \alpha b b \beta, \\
\Delta^2(w) & =  \beta\beta  bb \beta \beta \alpha, & \Delta^8(w) & =   \beta \beta \alpha b b \beta \beta, \\
\Delta^3(w) & =  \beta  bb \beta \beta \beta \alpha, & \Delta^{9}(w) & =   \beta \alpha b b \beta \beta \beta, \\
\Delta^4(w) & =  bb \beta \beta \beta \beta \alpha, & \Delta^{10}(w) & =   \alpha b b \beta \beta \beta \beta, \\
\Delta^5(w) & =  b \beta \beta \beta \beta \alpha b, & \Delta^{11}(w) & =   b \beta \beta \beta \beta \alpha b = \Delta^5(w). \\
\Delta^6(w) & =  \beta \beta \beta \beta \alpha b b,
\end{aligned}
\]
Note that for $i > 4$, the element $\Delta^i(w)$ is always a cyclic shift of the previous element $\Delta^{i-1}(w)$, illustrating \Cref{it:vsechny4:2,it:vsechny4:4} of \Cref{vsechny4}.
\end{example}

In \cite{KlMePeSt18}, we considered only derived words to non-empty prefixes.
If we include in our considerations also the empty prefix $\varepsilon$, then the derived word to $\varepsilon$ in $\uu$ is $\uu$ itself and it is fixed by $\psi = \Delta^{0}(\psi)$.

The objective of this article is to detect the sets of primitive Sturmian morphisms that are closed under derivation.
In order to do that, a tool deciding whether a Sturmian word is fixed by such a morphism is needed.
It is easy to see that a Sturmian word $\uu$ is  fixed by  a primitive morphism if and only if $\uu$ has a purely periodic $S$-adic representation.
Yasutomi \cite{Ya99}  found a characterization of such Sturmian words using algebraic properties of their parameters.
To quote his result we recall that  a number $\lambda$  is quadratic if it is an irrational root of a quadratic equation $Ax^2+Bx+C=0$ with rational coefficients $A\neq 0, B,C$.

Let $\mathbb{Q}(\lambda)$ denote the minimal number field containing $\mathbb{Q}$ and $\lambda$.
If  $\lambda$  is quadratic,  then $\mathbb{Q}(\lambda) = \{c+d\lambda \colon c,d \in \mathbb{Q} \}$.
Let $\overline{\lambda}$ be the other root of $Ax^2+Bx+C=0$, i.e., the algebraic conjugate of $\lambda$.
Since the mapping $z  = c+d\lambda\mapsto \overline{z} = c+d\overline{\lambda }$  is an automorphism of the field $\mathbb{Q}(\lambda)$, we have $ \overline{z+y}= \overline{z}+\overline{y}$ and $ \overline{z\cdot y}= \overline{z}\cdot \overline{y}$ for each  $z,y \in \mathbb{Q}(\lambda)$.

\begin{thm}[\cite{Ya99}]\label{Yasutomi0}
Let $ \gamma, \delta \in [0,1]$ and $\gamma$ be irrational.
A Sturmian word coding the two interval exchange transformation with parameters  $ \ell_0=1-\gamma,\ell_1=  \gamma, \rho = \delta $
 is fixed by a primitive morphism if and only if
\begin{enumerate}
\item $\gamma $  and $\delta$ belong to the same quadratic field $\Q(\gamma)$; and
\item $\overline{\gamma }\notin (0,1)$; and
\item  If $\overline{\gamma }>1$,  then  $\overline{\delta} \in [1-\overline{\gamma },\overline{\gamma}]$; \ \ if  $\overline{\gamma }<0$, then $\overline{\delta} \in [ \overline{\gamma}, 1-\overline{\gamma }]$.
\end{enumerate}
\end{thm}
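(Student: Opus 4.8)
Since the statement is a bi-implication, I would organize the argument around the bridge already recalled above: the Sturmian word $\uu=\ss_{\gamma,\delta}$ is fixed by a primitive morphism if and only if it admits a purely periodic S-adic representation, i.e.\ there is a primitive $\psi\in\mathcal M$ with $\psi(\uu)=\uu$. The plan is to replace this symbolic periodicity by the periodicity of the point $(\gamma,\delta)$ under a renormalization map acting on the parameter space of Sturmian words, and then to read off conditions (1)--(3) from the arithmetic of that fixed point together with its Galois conjugate.

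First I would set up the renormalization explicitly. For each elementary letter $c\in\{a,b,\alpha,\beta\}$ I would compute, using \Cref{shift} and the images in \eqref{eq:elem}, the action of $\varphi_c$ on parameters: if $\varphi_c$ sends a Sturmian word of slope $\gamma'$ and intercept $\delta'$ to one of slope $\gamma$ and intercept $\delta$, then $\gamma=f_c(\gamma')$ and $\delta=g_c(\gamma',\delta')$ are fractional linear in each coordinate with integer coefficients (for instance $\varphi_a$ gives $\gamma=\gamma'/(1+\gamma')$). Composing along one period of the S-adic representation, the relation $\psi(\uu)=\uu$ forces $(\gamma,\delta)$ to be a fixed point of the composed map $G_\psi$, whose slope component is a Möbius transformation with integer matrix. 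Hence $\gamma$ is a root of a quadratic with integer coefficients, so quadratic, and $\delta$ then solves a fractional linear equation with coefficients in $\Q(\gamma)$, whence $\delta\in\Q(\gamma)$. This yields condition (1) and shows its necessity.

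Next I would exploit that all the maps $f_c,g_c$ have rational coefficients, so the field automorphism $z\mapsto\overline z$ of $\Q(\gamma)$ commutes with $G_\psi$; consequently $(\overline\gamma,\overline\delta)$ is also a fixed point of $G_\psi$. Since $\psi$ is expanding in length (and desubstitution is the contraction), $(\gamma,\delta)$ is the attracting fixed point of $G_\psi$ inside the admissible domain $\gamma\in(0,1),\ \delta\in[0,1)$, while $(\overline\gamma,\overline\delta)$ is the repelling one. The content of conditions (2) and (3) is exactly the description of where this repelling conjugate point may sit: I would show, by tracking the image of the admissible domain under conjugation through the natural extension of the renormalization, that pure periodicity is possible precisely when $\overline\gamma\notin(0,1)$ and $\overline\delta$ lies in the flipped fundamental interval, which a direct computation identifies as $[1-\overline\gamma,\overline\gamma]$ when $\overline\gamma>1$ and $[\overline\gamma,1-\overline\gamma]$ when $\overline\gamma<0$ (equivalently $|\overline\delta-\tfrac12|\le|\overline\gamma-\tfrac12|$).

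The main obstacle will be the two-sided analysis needed to make this conjugate-domain argument rigorous and, above all, to prove sufficiency. For necessity of (2)--(3) I must rule out a conjugate point in the forbidden region, and for sufficiency I must show that any admissible $(\gamma,\delta)$ with $\gamma,\delta\in\Q(\gamma)$ and $(\overline\gamma,\overline\delta)$ in the flipped domain genuinely \emph{returns} to itself under renormalization, i.e.\ has a purely periodic --- not merely eventually periodic --- S-adic representation. I would handle both by realizing the renormalization as the first-return map of a piecewise fractional linear system on the two-dimensional domain (slope $\times$ intercept) whose conjugate copy is the stated interval bundle, and by invoking a recurrence argument together with the classical continued-fraction characterization of Sturm numbers for the slope coordinate to promote eventual periodicity to pure periodicity. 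Pinning down the exact endpoints of the intervals, including the subtle distinction between lower and upper Sturmian words at the boundary values $\delta\in\{0,1\}$, is where the bulk of the careful bookkeeping lies.
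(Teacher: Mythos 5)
This statement is quoted from Yasutomi's 1999 paper and is \emph{not} proved in the present article --- it is imported as a black box (the authors only reprove the equivalent reformulation in \Cref{Yasutomi} by a change of variables). So there is no proof in the paper to compare yours against; your proposal has to stand on its own as a proof of Yasutomi's theorem, and as it stands it does not.

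The necessity of condition (1) is fine: a purely periodic S-adic expansion makes $(\gamma,\delta)$ a fixed point of a composition of integer fractional-linear maps, so $\gamma$ is quadratic and $\delta\in\Q(\gamma)$. But everything after that is a plan rather than an argument. The identification of the admissible region for $(\overline\gamma,\overline\delta)$ --- the precise intervals $[1-\overline\gamma,\overline\gamma]$ and $[\overline\gamma,1-\overline\gamma]$ --- is asserted via ``a direct computation identifies'' and ``tracking the image of the admissible domain under conjugation through the natural extension,'' but that tracking is exactly the theorem: one must show that the union over all admissible periodic words of the conjugate domains is precisely this interval bundle, and that no periodic orbit can place the conjugate outside it. Worse, the sufficiency direction (that any $(\gamma,\delta)$ satisfying (1)--(3) is genuinely \emph{purely} periodic under renormalization, not just eventually periodic) is explicitly deferred to ``a recurrence argument'' that is never given; promoting eventual periodicity to pure periodicity for the two-dimensional map is the hard half of Yasutomi's proof and cannot be obtained from the continued-fraction characterization of Sturm numbers alone, since that only controls the slope coordinate and says nothing about the intercept $\delta$. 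The boundary cases $\delta\in\{0,1\}$ and the lower/upper distinction, which you flag as ``bookkeeping,'' are also where the closed versus open interval endpoints in (3) come from and require a separate argument. In short: the outline follows a known and viable strategy (essentially the natural-extension approach later formalized by Berth\'e, Ei, Ito and Rao), but the two steps that constitute the actual content of the theorem are both left open.
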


A quadratic number  $\gamma\in (0,1)$  with  conjugate $ \overline{\gamma} \notin (0,1)$ is called a \emph{Sturm number}.
 The notion  Sturm number  was originally defined via properties of coefficients in the continued fraction expansion of $\gamma$, later Allauzen \cite{Allauzen} found an algebraic characterization of Sturm numbers.

The parameters  $(\ell_0,\ell_1,\rho)$  of a Sturmian word  in the previous theorem satisfy   $\ell_0+\ell_1 = 1$, i.e.,  the parameter
$\ell_1$ equals the slope.
 We rewrite  this theorem to a  form  which is more convenient for our considerations.
 We normalize the parameters  $\ell_0,\ell_1$  of a two interval exchange $T$  to satisfy the condition that the longer interval is of length 1 and the shorter one is of length $\theta$. Clearly,  $\theta \in (0,1)$ and the slope $\gamma$ equals $\tfrac{\theta}{1+\theta}$ or $\tfrac{1}{1+\theta}$.
 This kind of normalization is also used in \cite[Chapter 6]{Fogg} in order to reveal the relation of Sturmian words to the Ostrowski numeration system.

\begin{thm}\label{Yasutomi}
Let $\theta \in (0,1)$ be irrational and $\rho \in [0,1+ \theta]$.    A Sturmian word with parameters   $ 1, \theta,\rho$ or
$\theta,1, \rho$ is fixed by a primitive morphism if and only if
\begin{enumerate}[(1)]
\item $\theta$  and $\rho$ belong to the same quadratic field; and
\item $\overline{\theta} <0$; and
\item \label{it:Yasutomi:3} $\overline {\theta} \leq \overline{\rho}\leq 1$.
\end{enumerate}
\end{thm}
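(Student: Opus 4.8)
The plan is to deduce \Cref{Yasutomi} from Yasutomi's original \Cref{Yasutomi0} by rescaling the interval lengths. Recall that the Sturmian words attached to the triplets $(\ell_0,\ell_1,\rho)$ and $(c\ell_0,c\ell_1,c\rho)$ coincide for every $c>0$. Hence, starting from the parameters $1,\theta,\rho$ (respectively $\theta,1,\rho$) I would multiply by $c=\tfrac{1}{1+\theta}$ so that the two interval lengths sum to $1$. This produces a word with slope $\gamma=\tfrac{\theta}{1+\theta}$ (respectively $\gamma=\tfrac{1}{1+\theta}$) and intercept $\delta=\tfrac{\rho}{1+\theta}$, and the hypothesis $\rho\in[0,1+\theta]$ guarantees $\delta\in[0,1]$, so \Cref{Yasutomi0} applies with this $\gamma$ and $\delta$. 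It then remains to translate each of its three conditions into the three conditions of \Cref{Yasutomi}.

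For the first condition, note that in either case $\gamma$ and $\theta$ are related by an invertible fractional-linear transformation with rational coefficients, so $\Q(\gamma)=\Q(\theta)$; since $\rho=(1+\theta)\delta$ and $\delta=\tfrac{\rho}{1+\theta}$, the requirement $\delta\in\Q(\gamma)$ is equivalent to $\rho\in\Q(\theta)$. For the remaining two conditions I would use that the conjugation $z\mapsto\overline z$ is a field automorphism of $\Q(\theta)$ (as recalled before \Cref{Yasutomi0}), so it commutes with the two fractional-linear maps above; thus $\overline\gamma=\tfrac{\overline\theta}{1+\overline\theta}$ or $\tfrac{1}{1+\overline\theta}$, and $\overline\delta=\tfrac{\overline\rho}{1+\overline\theta}$. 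Irrationality of $\theta$ forces $\overline\theta\notin\{0,-1\}$, so these expressions are well defined. A sign analysis of the map $x\mapsto\tfrac{x}{1+x}$ (resp. $x\mapsto\tfrac{1}{1+x}$) on the intervals $(0,+\infty)$, $(-1,0)$ and $(-\infty,-1)$ shows that $\overline\gamma\in(0,1)$ exactly when $\overline\theta>0$; hence $\overline\gamma\notin(0,1)$ is equivalent to $\overline\theta<0$, which is the second condition of \Cref{Yasutomi}.

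For the third condition I would treat the alternatives $\overline\gamma>1$ and $\overline\gamma<0$ separately. In each of the four combinations (two choices of slope, two signs of $\overline\gamma$) the two defining inequalities of \Cref{Yasutomi0}, for instance $1-\overline\gamma\le\overline\delta\le\overline\gamma$, become inequalities among $\tfrac{1}{1+\overline\theta}$, $\tfrac{\overline\rho}{1+\overline\theta}$ and $\tfrac{\overline\theta}{1+\overline\theta}$ after substituting the values of $\overline\gamma$ and $\overline\delta$; clearing the common denominator $1+\overline\theta$ collapses every case to the single inequality $\overline\theta\le\overline\rho\le 1$. The main obstacle is precisely this bookkeeping: the sign of $1+\overline\theta$ reverses the inequalities exactly in those subcases where Yasutomi's interval endpoints $1-\overline\gamma$ and $\overline\gamma$ also swap their roles, so one must verify that the sign reversal and the endpoint swap cancel, yielding the same final condition $\overline\theta\le\overline\rho\le 1$ in all four cases. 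Once this verification is carried out, the three conditions of \Cref{Yasutomi0} are seen to be equivalent to those of \Cref{Yasutomi}, and the theorem follows.
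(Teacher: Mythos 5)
Your proposal is correct and follows essentially the same route as the paper: both deduce \Cref{Yasutomi} from \Cref{Yasutomi0} via the normalization $\gamma=\tfrac{\theta}{1+\theta}$ or $\tfrac{1}{1+\theta}$, $\delta=\tfrac{\rho}{1+\theta}$, use that algebraic conjugation is a field automorphism, and check the sign/case analysis for the conjugate inequalities (the paper works with the inverse relations $\theta=\tfrac{1-\gamma}{\gamma}$, $\rho=\tfrac{\delta}{\gamma}$ and divides by $\overline{\gamma}$ or $1-\overline{\gamma}$, which is the same bookkeeping you describe). The sign-reversal verification you flag does indeed work out in all four cases, so the argument is complete.
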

\begin{proof}
The relation between the parameters $\gamma$ and $\delta$ in \Cref{Yasutomi0} and the parameters in our modification is $\gamma= \tfrac{\theta}{1+\theta}$ or $\gamma= \tfrac{1}{1+\theta}$ and $\delta = \tfrac{\rho}{1+\theta}$.
Equivalently, $$\Bigl(\theta= \tfrac{1-\gamma}{\gamma}\ \ \text{and}  \ \ \rho = \tfrac{\delta}{ \gamma}\Bigr)   \quad  \text{or }  \quad
\Bigl(\theta= \tfrac{\gamma}{1-\gamma}\ \ \text{and}  \ \  \rho = \tfrac{\delta}{1-\gamma}\Bigr)\,.$$
Clearly, $\gamma$ and $\delta$ belong to the same quadratic field if and only if $\theta$ and $\rho$ belong to the same quadratic field.
The fact that the quadratic slope $\gamma$ is a Sturm number implies $$ \overline{\gamma}\notin (0,1) \ \ \Longleftrightarrow \ \  \overline{\gamma}(1-\overline{\gamma})<0 \ \ \Longleftrightarrow \ \ \tfrac{\overline{\gamma}}{1-\overline{\gamma}}<0 \quad \text{ and } \quad \tfrac{1-\overline{\gamma}}{\overline{\gamma}} <0
\ \  \Longleftrightarrow \ \  \overline{\theta} <0 \,.
$$
If $\overline{\gamma} >1$, \Cref{it:Yasutomi:3} of \Cref{Yasutomi} can be equivalently rewritten
$$1-\overline{\gamma }\leq \overline{\delta}\leq  \overline{\gamma}\ \ \Longleftrightarrow \ \
1\geq  \tfrac{\overline{\delta}}{1-\overline{\gamma}}\geq \tfrac{ \overline{\gamma}}{1-\overline{\gamma}}  \quad \text{ and } \quad
\tfrac{ 1-\overline{\gamma}}{\overline{\gamma}}\leq  \tfrac{\overline{\delta}}{\overline{\gamma}}\leq  1
\ \ \Longleftrightarrow \ \   1\geq \overline{\rho} \geq \overline{\theta}\,.$$
If $\overline{\gamma} < 0$, then
\[\overline{\gamma }\leq \overline{\delta}\leq  1-\overline{\gamma}\ \ \Longleftrightarrow \ \  \tfrac{ \overline{\gamma}}{1-\overline{\gamma}} \leq
 \tfrac{\overline{\delta}}{1-\overline{\gamma}}\leq \ 1   \quad \text{ and } \quad 1  \geq
 \tfrac{\overline{\delta}}{\overline{\gamma}}\geq \ \tfrac{1- \overline{\gamma}}{\overline{\gamma}}  \ \ \Longleftrightarrow \ \  \overline{\theta}\leq \overline{\rho}  \leq 1. \qedhere
\]
\end{proof}

\section{Auxiliary lemmas} \label{sec:lemmas}

\begin{lem}\label{prolong}
Let $w$ be a factor of an aperiodic word $\uu$.
\begin{enumerate}[(1)]
\item \label{it:prolong1} There exists $s$ such that $ws$ is  right special  in  $\uu$, and $ws'$ is not right special for any proper  prefix  $s'$ of $s$.
Moreover,  $\dd_{\uu}(w) =\dd_{\uu}(ws)$.
\item \label{it:prolong2} There exists $p$ such that $pw$ is  left special  in  $\uu$, and $p'w$ is not left special for any proper suffix   $p'$ of $u$.
 Moreover,  if $p'w$ is a prefix of $\uu$ for some proper  suffix $p'$ of $p$, then  $\dd_{\uu}(w) =\dd_{\uu}(p'w)$.
 Otherwise, $\dd_{\uu}(w) =\dd_{\uu}(pw)$.
\end{enumerate}
\end{lem}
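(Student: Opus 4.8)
The plan is to construct $s$ and $p$ by prolonging $w$ with forced letters, and then to read off the derived words by comparing the occurrences, and hence the return words, of $w$ and of the prolonged factor.

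For the existence statements I would proceed greedily. Since $\uu$ is recurrent, $w$ occurs infinitely often. If $w$ is not right special there is a unique letter $c$ with $wc\in\L(\uu)$, and then every occurrence of $w$ is followed by $c$; replacing $w$ by $wc$ and iterating produces a chain of forced right prolongations. Were this chain never to reach a right special factor, all occurrences of $w$ would share one and the same right-infinite continuation, so two occurrences at positions $i<j$, which exist by recurrence, would give $\sigma^i(\uu)=\sigma^j(\uu)$ and $\uu$ would be eventually periodic, contradicting aperiodicity. Hence the chain stops at the first right special factor $ws$, and minimality yields that $ws'$ is not right special for any proper prefix $s'$ of $s$. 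The word $p$ in part~(2) is obtained symmetrically by prolonging to the left; recurrence again supplies occurrences of $w$ with unbounded room on their left, so a non-terminating forced left prolongation would once more force eventual periodicity.

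For part~(1), since every proper-prefix extension $ws'$ has a unique right continuation, the forcing propagates to the end of the block: every occurrence of $w$ is followed by $s$, and as $w$ is a prefix of $ws$ the occurrences of $w$ and of $ws$ lie at identical positions. Thus consecutive occurrences, and therefore the return words together with the order in which they appear, are the same for $w$ and for $ws$; reading off the two codings from the common first occurrence gives $\dd_\uu(w)=\dd_\uu(ws)$.

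Part~(2) is the delicate one, since forced left prolongation is obstructed at the start of $\uu$. Writing $i_0<i_1<\cdots$ for the occurrences of $w$, every occurrence with $i_k\ge|p|$ is preceded by exactly $p$, so $pw$ occurs precisely at the positions $i_k-|p|$ with $i_k\ge|p|$; an occurrence with $i_k<|p|$ is preceded only by the length-$i_k$ suffix of $p$, and for the first occurrence this says that $\uu$ begins with $p'w$, where $p'$ is that suffix. The dichotomy in the statement is thus governed by whether $i_0\ge|p|$, in which case one compares with $pw$, or $i_0<|p|$, in which case one compares with $p'w$, a proper suffix of $p$ followed by $w$ and forming a prefix of $\uu$. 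In both regimes I would match the return word $R^w$ of $w$ read between two consecutive occurrences with the corresponding return word of the prolonged factor through the conjugacy identity $R^{pw}p=pR^w$, and likewise with $p'$; this identity shows that two return words of $w$ coincide exactly when the matched return words of the prolonged factor coincide, so the bijection between them preserves the coding. The crux, and the step I expect to be the main obstacle, is to check that the two codings start in phase, so that one gets genuine equality rather than a shift of derived words: when $i_0\ge|p|$ no occurrence of $w$ is discarded and the first occurrences of $w$ and $pw$ correspond, whereas when $i_0<|p|$ the occurrence set of $p'w$ is the full shifted set $\{i_k-|p'|\}$, so again nothing is lost and the first occurrences agree. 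This phase alignment is exactly what upgrades the structural bijection to the claimed equalities $\dd_\uu(w)=\dd_\uu(pw)$ and $\dd_\uu(w)=\dd_\uu(p'w)$.
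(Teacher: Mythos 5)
Your proposal is correct and follows essentially the same route as the paper: extend $w$ to the nearest right-special (resp.\ left-special) factor, observe that the extension is forced at every occurrence of $w$, and deduce that the occurrences and return words correspond (identically on the right, by conjugation $R^{pw}p=pR^{w}$ on the left, with the prefix caveat). You merely spell out two points the paper takes for granted, namely the termination of the forced prolongation via aperiodicity and the fact that the conjugation bijection preserves equality of return words.
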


\begin{proof}
\Cref{it:prolong1}:
As $\uu$ is aperiodic, the factor $w$ is a factor of some right special factor.
Let $ws$ be the shortest such right special factor.
Thus, the word $s$ is unique.
Let $r_1,r_2,\ldots,r_k$ be all the return words of $w$.
As the factor $s$ always occurs after $w$, the word $r_iws \in \L(\uu)$.
Since $w$ is a prefix of $r_iw$, the word $ws$ is a prefix of $r_iws$, and so $r_i$ is a return word of $ws$.
To conclude, the return words of $w$ and $ws$ are identical, and thus so are the derived words with respect to $w$ and $ws$.

\Cref{it:prolong2}:
The factor $w$ is a factor of some left special factor.
Let $pw$ be the shortest such left special factor.
Let $r_1,r_2,\ldots,r_k$ be all the return words of $w$.
Similarly to the previous case, the word $p'r_iw$ contains exactly two occurrences of $p'w$ for each prefix $p'$ of $p$.
Thus, $p'r_i$ is a return word of $p'w$.
The occurrences of $r_i$ are occurrences of $p'r_i$ shifted by $p'$ for all $i$ maybe except for the first occurrence.
Thus, if $p'w$ is a prefix of $\uu$ for some suffix of $p'$ of $p$, then $\dd_{\uu}(w) =\dd_{\uu}(p'w)$.
In the other case, $\dd_{\uu}(w) =\dd_{\uu}(pw)$.
\end{proof}

As a consequence of the last lemma, to describe the derived words to all factors of $\uu$  we can restrict our study to factors $w$ such that $w$ is either a right special prefix of $\uu$  or $w$ is  a bispecial factor of $\uu$ but  not a prefix of $\uu$.

The next proposition is  borrowed from our previous article, where  we investigated  derived words to prefixes of Sturmian words.
\begin{prop}[\cite{KlMePeSt18}]\label{prop:der_of_preimage_fi_b}
	Let $\uu$  and $\uu'$ be Sturmian words such that $\uu = \varphi_{b}(\uu')$ and let $w'$ be a non-empty right special prefix of $\uu'$.
	We have $d_{\uu'}(w') = d_{\uu}(w)$ with $w =  \varphi_{b}(w')0$.
\end{prop}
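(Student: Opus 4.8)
The plan is to exploit the fact that $\varphi_b$ admits a clean desubstitution (synchronization) of its images, and to use this to set up an order-preserving bijection between the occurrences of $w'$ in $\uu'$ and the occurrences of $w=\varphi_b(w')0$ in $\uu$. Once the occurrences match up, the return words, and hence the derived words, will match up automatically.

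First I would record the combinatorial properties of $\varphi_b$. Since $\varphi_b(0)=0$ and $\varphi_b(1)=01$, the word $\uu=\varphi_b(\uu')$ begins with $0$ and every letter $1$ in $\uu$ is immediately preceded by a $0$; consequently $11\notin\L(\uu)$ and $\uu$ factorizes \emph{uniquely} as a concatenation of the blocks $0$ and $01$ (each $1$ is forced to pair with the preceding $0$). In this factorization every $0$ begins a block and every $1$ is an interior letter, so the block-starting positions of $\uu$ are exactly the positions carrying the letter $0$, and they are in order-preserving bijection with the letters of $\uu'$ (the $i$-th block being $\varphi_b(\uu'_i)$). The same unique decodability shows $\varphi_b$ is injective on words. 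This synchronization lemma is the step I expect to carry the weight of the argument; everything afterwards is bookkeeping.

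Next I would transfer occurrences, writing $m=|w'|$. For a non-empty $w'$ the image $\varphi_b(w')$ starts with $0$, so any occurrence of $w=\varphi_b(w')0$ in $\uu$ starts at a block boundary, and its last letter, being a $0$, is again a block boundary. By unique factorization the segment $\varphi_b(w')$ caught between these two boundaries splits exactly into the blocks $\varphi_b(w'_0),\dots,\varphi_b(w'_{m-1})$, so it is read off as an occurrence of $w'$ in $\uu'$. Conversely, an occurrence of $w'$ at position $i$ in $\uu'$ gives $\uu_{[q,\,q+|\varphi_b(w')|)}=\varphi_b(w')$, where $q$ is the start of the $i$-th block, and since the next block again begins with $0$ we read $\varphi_b(w')0=w$ there. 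This yields an order-preserving bijection between occurrences of $w'$ in $\uu'$ and occurrences of $w$ in $\uu$; the appended $0$ is precisely what pins the right endpoint to a block boundary and rules out spurious occurrences (for instance, for $w'=0$ the bare image $\varphi_b(w')=0$ would match every block start, whereas $w=00$ matches only the single-$0$ blocks). The right-special hypothesis on $w'$ enters here to guarantee $w'0,w'1\in\L(\uu')$, whence $w0,w1\in\L(\uu)$, so that $w$ is itself right special and is the correct factor on which to read off the derived word.

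Finally I would conclude. Because $w'$ is a prefix of $\uu'$ its first occurrence is at $0$, so $\uu'=r_{s_0}r_{s_1}\cdots$ is its return-word decomposition; applying $\varphi_b$ gives $\uu=\varphi_b(r_{s_0})\varphi_b(r_{s_1})\cdots$ and $w=\varphi_b(w')0$ is a prefix of $\uu$. The occurrence bijection shows that the maximal factors of $\uu$ between consecutive occurrences of $w$ are exactly the words $\varphi_b(r)$ for the return words $r$ of $w'$, so each $\varphi_b(r)$ is a return word of $w$ and every return word of $w$ arises this way. Since $\varphi_b$ is injective, $r\mapsto\varphi_b(r)$ is a bijection of the finitely many return words that preserves the order in which they occur; hence the two coding sequences coincide and $\dd_{\uu'}(w')=\dd_{\uu}(w)$ up to the permitted relabeling of letters. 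The only delicate point is the synchronization lemma of the second paragraph together with the endpoint analysis involving the appended $0$; the rest is routine.
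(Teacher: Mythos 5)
Your argument is correct. Note that the paper itself gives no proof of this proposition --- it is imported verbatim from \cite{KlMePeSt18} --- so there is nothing in the present text to compare against; your synchronization argument (unique factorization of $\varphi_b(\uu')$ over the code $\{0,01\}$, the appended $0$ pinning the right endpoint to a block boundary, and the resulting order-preserving bijection of occurrences carrying return words $r\mapsto\varphi_b(r)$) is exactly the standard route and is the one taken in the cited source. One small remark: the right-speciality of $w'$ is never actually used in your proof (the occurrence bijection and the equality of derived words hold for any non-empty prefix $w'$); it is a hypothesis inherited from the context in \cite{KlMePeSt18}, where the statement is applied along the chain of right special prefixes, so your aside about where it ``enters'' is dispensable rather than wrong.
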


\begin{coro}\label{jinak}  Let $\ww$ be a Sturmian word and $k\in \mathbb{N}$.
\begin{enumerate}[(1)]
	\item \label{Claim1}
Denote    $\uu= \varphi^k_b(\ww)$.
The word $0^k$ is a bispecial prefix of $\uu$ and  $\dd_\uu(0^k) = \ww$.
\item \label{Claim2}
If $1\ww$ is  a Sturmian word, then  $\varphi^k_a(1\ww) =1\varphi^k_b(\ww)$.
\end{enumerate}
\end{coro}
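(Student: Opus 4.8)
The plan is to treat the two assertions separately, relying throughout on the easy computation $\varphi_b^k(0)=0$ and $\varphi_b^k(1)=0^k1$ (immediate by induction on $k$, since $\varphi_b^k(1)=\varphi_b^{k-1}(01)=0^{k-1}\cdot 01=0^k1$). For the first assertion I begin with the structural claims about $0^k$. As $\varphi_b^k$ sends every letter to a word starting with $0$, writing $\ww=0^m1\cdots$ (with $m\ge 0$, possible since $\ww$ is Sturmian and hence contains a $1$) gives $\uu=\varphi_b^k(\ww)=0^m\cdot 0^k1\cdots=0^{m+k}1\cdots$, so $0^k$ is a prefix of $\uu$. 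To see that $0^k$ is bispecial it is enough to exhibit the factors $0^{k+1}$, $0^k1$ and $10^k$ of $\uu$, since these say exactly that $0^k$ extends to the right and to the left by both letters. I would obtain them as images of short factors of $\ww$: the letter $1$ gives $\varphi_b^k(1)=0^k1$; the factor $01$ (present because $\ww$ is Sturmian) gives $\varphi_b^k(01)=0^{k+1}1$, hence $0^{k+1}$; and a factor $10^j1$ (present because $1$ recurs in $\ww$) gives $\varphi_b^k(10^j1)=0^k10^{j+k}1$, hence $10^k$ because $j+k\ge k$.

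For the derived word I would argue by peeling off one copy of $\varphi_b$ at a time. Set $\uu_i=\varphi_b^i(\ww)$, so that $\uu_i=\varphi_b(\uu_{i-1})$ and, by the structural argument above applied with $i$ in place of $k$, $0^i$ is a right special prefix of $\uu_i$ for every $i$. The base case is a direct count: the occurrences of the letter $0$ in $\uu_1=\varphi_b(\ww)$ are precisely the starting positions of the blocks $\varphi_b(a_j)$ for $\ww=a_0a_1\cdots$, so the return words of $0$ are $\varphi_b(0)=0$ and $\varphi_b(1)=01$, read in the order dictated by $\ww$; hence $\dd_{\uu_1}(0)=\ww$. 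For the inductive step I apply \Cref{prop:der_of_preimage_fi_b} with the non-empty right special prefix $w'=0^{i-1}$ of $\uu_{i-1}$; since $\varphi_b(0^{i-1})0=0^i$, it gives $\dd_{\uu_i}(0^i)=\dd_{\uu_{i-1}}(0^{i-1})$. Chaining these equalities from $i=k$ down to $i=1$ yields $\dd_\uu(0^k)=\dd_{\uu_1}(0)=\ww$.

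For the second assertion the engine is the conjugacy identity
\[
\varphi_b(w)\,0=0\,\varphi_a(w)\qquad\text{for every finite word }w\in\{0,1\}^*,
\]
which follows by a one-line induction on $|w|$: it holds for $w=\varepsilon$, and the inductive step reduces to the letter identities $0\varphi_a(0)=00=\varphi_b(0)0$ and $0\varphi_a(1)=010=\varphi_b(1)0$. Passing to prefixes of increasing length sends the trailing $0$ off to infinity and upgrades the identity to $0\,\varphi_a(\vv)=\varphi_b(\vv)$ for every infinite word $\vv$. Hence, for any infinite $\vv$,
\[
\varphi_a(1\vv)=\varphi_a(1)\varphi_a(\vv)=10\,\varphi_a(\vv)=1\bigl(0\,\varphi_a(\vv)\bigr)=1\,\varphi_b(\vv).
\]
An induction on $k$ now finishes the job: the case $k=0$ is trivial, and assuming $\varphi_a^k(1\ww)=1\varphi_b^k(\ww)$, the displayed identity applied to $\vv=\varphi_b^k(\ww)$ gives $\varphi_a^{k+1}(1\ww)=\varphi_a\bigl(1\varphi_b^k(\ww)\bigr)=1\varphi_b^{k+1}(\ww)$. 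The hypothesis that $1\ww$ is Sturmian is only there to place the statement in context; the identity itself is purely formal.

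The point that needs the most care is in the first assertion: one must make sure that $0^i$ is right special at every intermediate stage, so that \Cref{prop:der_of_preimage_fi_b} is applicable, and that the base case $i=1$ is treated by hand, since the proposition demands a non-empty prefix. In the second assertion the only subtlety is the passage from the finite identity to its infinite version, which must be phrased so that the trailing $0$ is correctly discarded in the limit.
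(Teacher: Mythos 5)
Your proof is correct and follows essentially the same route as the paper: bispeciality of $0^k$ from the explicit form $\varphi_b^k(1)=0^k1$, the derived word obtained by iterating \Cref{prop:der_of_preimage_fi_b}, and the conjugacy identity $\varphi_b(x)0=0\varphi_a(x)$ upgraded to infinite words for the second part. The only difference is that you anchor the induction at $i=1$ with a direct count of the return words of $0$ in $\varphi_b(\ww)$, which is in fact slightly more careful than the paper's base case, since \Cref{prop:der_of_preimage_fi_b} requires a non-empty right special prefix.
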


\begin{proof}
\Cref{Claim1}: The proof is done by induction on $k$.
If $k=0$, then the empty word is right special factor of $\uu=\ww$ and the derived word with respect to the empty word is the word $\ww$ itself.

Now   $k>1$.  First, write the explicit form of the morphism  $\varphi_b^k: 0\mapsto 0\text{ and } 1\mapsto 0^k1$. Therefore, in the word $\uu$ two neighbouring occurrences of the letter 1 are  separated either by the  block $0^k$ or  $0^{k+1}$. It means that $0^k$ is a bispecial factor of $\uu$.  Let us assume that  $ \uu' = \varphi^{k-1}_b(\ww)$ has a right special prefix $0^{k-1}$  and   $\dd_{\uu'}(0^{k-1}) = \ww$.  Clearly, $0^k = \varphi_b(0^{k-1})0$  is a right special prefix of  $\uu=\varphi_b(\uu') =  \varphi^k_b(\ww)$.
By Proposition  \ref{prop:der_of_preimage_fi_b},
  $\dd_{\uu}(0^{k})=  \dd_{\uu'}(0^{k-1}) =  \ww$.

\Cref{Claim2}: We proceed by induction on $k$. The case  $k=0$ is trivial.  If $k>0$,
we use the fact that the morphisms $\varphi_a$ and $\varphi_b$ are conjugate, in particular
$ 0\varphi_a(x) = \varphi_b(x)0 \  \text{ for every }\ x \in \{0,1\}^*$.
It implies $ 0\varphi_a({\bf x}) =  \varphi_b({\bf x})$ for every ${\bf x} \in \{0,1\}^\mathbb{N}$.

Applying this property to the word $1\ww$  and using the induction hypothesis, we obtain
$\varphi^k_a(1\ww) = \varphi_a\bigl(\varphi^{k-1}_a(1\ww)\bigr) = \varphi_a\bigl(1 \varphi_b^{k-1}(\ww) \bigr) = \varphi_a(1)\bigl(\varphi_b^{k-1}(\ww) \bigr) = 10 \varphi_a\bigl(\varphi_b^{k-1}(\ww) \bigr)  = 1 \varphi_b\bigl(\varphi_b^{k-1}(\ww) \bigr) =1\varphi_b^{k}(\ww)$.
\end{proof}

\begin{lem}\label{image}
Let $\uu$ be a Sturmian word with parameters $\ell_0$, $\ell_1$  and $\rho$.
The Sturmian word
\begin{itemize}
\item $\varphi_b(\uu)$ has parameters $\ell_0+\ell_1$, $\ell_1$ and $\rho $;
\item  $\varphi_a(\uu)$ has parameters  $\ell_0+\ell_1$, $\ell_1$ and  $\rho +\ell_1$;
\item $\varphi_\beta(\uu)$ has parameters $\ell_0$, $\ell_0+\ell_1$ and  $\rho +\ell_0$;
\item  $\varphi_\alpha(\uu)$ has parameters $\ell_0$, $\ell_0+\ell_1$ and  $\rho $.
\end{itemize}
\end{lem}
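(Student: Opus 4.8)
The plan is to realize each image $\varphi_x(\uu)$ directly as a 2iet coding, by viewing the original transformation $T$ as the first-return (induced) map of the new, longer transformation $T'$ on a suitable window. Fix the lower 2iet $T$ on $I=[0,\ell_0+\ell_1)$ with atoms $I_0=[0,\ell_0)$, $I_1=[\ell_0,\ell_0+\ell_1)$, coding the orbit of $\rho$ as $\uu$. For each of the four morphisms I would write down the 2iet $T'$ on the enlarged interval $I'$ determined by the claimed new lengths, single out a subinterval $J\subset I'$ of length $\ell_0+\ell_1$, and check two things: (a) the first-return map of $T'$ to $J$, transported to $I$ by the translation $\iota\colon I\to J$, coincides with $T$ (including its $I_0/I_1$ labelling); and (b) the piece of the $T'$-itinerary traversed between two consecutive visits of $J$ equals $\varphi_x(0)$ when the corresponding $T$-point lies in $I_0$, and $\varphi_x(1)$ when it lies in $I_1$. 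Granting (a), (b), and the fact that the new intercept equals $\iota(\rho)$, the full $T'$-itinerary of $\rho'$ is the concatenation $\varphi_x(u_0)\varphi_x(u_1)\cdots=\varphi_x(\uu)$, which is exactly the assertion.

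I would carry out the verification explicitly for $\varphi_b$, the simplest case. Here $T'$ acts on $I'=[0,\ell_0+2\ell_1)$ with $I_0'=[0,\ell_0+\ell_1)$ and $I_1'=[\ell_0+\ell_1,\ell_0+2\ell_1)$, and I take $J=[0,\ell_0+\ell_1)=I$ with $\iota=\mathrm{id}$. If $x\in I_0=[0,\ell_0)$ then $T'(x)=x+\ell_1\in J$, the return is immediate, the itinerary read off is $0$, and the return point is $x+\ell_1=T(x)$. If $x\in I_1=[\ell_0,\ell_0+\ell_1)\subset I_0'$ then $T'(x)=x+\ell_1\in I_1'$ and $T'^{\,2}(x)=x-\ell_0\in J$, so the itinerary read off is $01$ and the return point is $x-\ell_0=T(x)$. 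Hence the induced map is $T$ and the return blocks are exactly $\varphi_b(0)=0$ and $\varphi_b(1)=01$; since $\rho'=\rho\in J$, the coding of $\rho$ under $T'$ is $\varphi_b(\uu)$.

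The remaining three morphisms follow the same template, with the windows $J=[\ell_1,\ell_0+2\ell_1)$ for $\varphi_a$ (same $T'$ as for $\varphi_b$, since both produce lengths $\ell_0+\ell_1,\ell_1$), $J=[0,\ell_0+\ell_1)$ for $\varphi_\alpha$, and $J=[\ell_0,2\ell_0+\ell_1)$ for $\varphi_\beta$ (the latter two on $I'=[0,2\ell_0+\ell_1)$ with $I_0'=[0,\ell_0)$). In each case computing the first one or two iterates of $T'$ shows that the return blocks are precisely $\varphi_x(0)$ and $\varphi_x(1)$ and that the first-return map, conjugated by $\iota$, is $T$. The translation $\iota$ embedding $I$ as the window $J$ is the conceptual meaning of the third parameter: for $\varphi_b$ and $\varphi_\alpha$ one has $\iota=\mathrm{id}$ and $\rho'=\rho$, whereas for $\varphi_a$ and $\varphi_\beta$ one has $\iota(x)=x+\ell_1$, respectively $\iota(x)=x+\ell_0$, giving $\rho'=\rho+\ell_1$, respectively $\rho'=\rho+\ell_0$; in all cases $\rho'=\iota(\rho)\in J$ precisely because $\rho\in I$.

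The only genuinely delicate point, which I would treat with care rather than by the rough inclusions written above, is the behaviour at the endpoints of the half-open intervals, and correspondingly the distinction between lower and upper Sturmian words. For the lower case one uses left-closed atoms as above; for the upper case (domains $(0,\ell_0+\ell_1]$, etc.) one repeats the identical computation with right-closed atoms, assigning each boundary point to the atom dictated by the convention so that (a) and (b) still hold. Checking that $\iota$ carries the partition $\{I_0,I_1\}$ onto the correctly labelled sub-pieces of $J$ --- that is, that the induced map is conjugate to $T$ as a \emph{coded} system and not merely as a transformation --- is what forces the return itineraries to reproduce $\varphi_x$ letter by letter, and it is the step I expect to require the most bookkeeping.
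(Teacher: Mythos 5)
Your proof is correct, but it takes a genuinely different route from the paper. The paper simply invokes \cite[Lemma 2.2.18]{BeSe_Lothaire}, which states that $G=\varphi_b$ maps $\ss_{\gamma,\delta}$ to $\ss_{\gamma/(1+\gamma),\,\delta/(1+\gamma)}$, rescales that normalized triple by the factor $1+\gamma$ to land on $(\ell_0+\ell_1,\ell_1,\rho)$, and declares the other three cases analogous. You instead give a self-contained dynamical argument: you exhibit the original exchange $T$ as the first-return map of the enlarged exchange $T'$ on a window $J$, check that the return itineraries are exactly $\varphi_x(0)$ and $\varphi_x(1)$, and read off the new intercept as $\iota(\rho)$ where $\iota$ embeds $I$ into $J$. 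I verified your window choices for all four morphisms ($J=[0,\ell_0+\ell_1)$ for $\varphi_b,\varphi_\alpha$; $J=[\ell_1,\ell_0+2\ell_1)$ for $\varphi_a$; $J=[\ell_0,2\ell_0+\ell_1)$ for $\varphi_\beta$) and the return blocks and induced maps come out right in each case, so the sketched cases do go through exactly as in your worked example for $\varphi_b$. What your approach buys is independence from the external reference and a conceptual explanation of the third parameter (the intercept is just the image of $\rho$ under the embedding of the window), which is the part the paper's renormalization computation leaves opaque; what the paper's approach buys is brevity. Your level of completeness (one case in full, the rest by the same template) matches the paper's own, and your closing caveat about half-open endpoint conventions for lower versus upper words is a real issue that the paper glosses over entirely and that your setup handles correctly by redoing the computation with right-closed atoms.
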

\begin{proof}
\cite[Lemma 2.2.18]{BeSe_Lothaire} claims that $G= \varphi_b$  maps the lower Sturmian word $\ss_{\gamma,\delta}$ to the word
$\ss_{\frac{\gamma}{1+\gamma},\frac{\delta}{1+\gamma}}$. And analogously, the upper Sturmian word
 $\ss'_{\gamma,\delta}$ is mapped by $G$ to to the upper Sturmian word
$\ss'_{\frac{\gamma}{1+\gamma},\frac{\delta}{1+\gamma}}$.
Thus, using our notation,  a  Sturmian word with the triplet of parameters
$\frac{1}{\ell_0 + \ell_1} \left (\ell_0, \ell_1,\rho \right ) = (1-\gamma,
\gamma, \delta)$ is mapped by $\varphi_b$ to a Sturmian word with the triplet of  parameters
$$ (\ell_0^{new}, \ell_1^{new} ,\rho^{new})=c(1- \tfrac{\gamma}{1+\gamma}, \tfrac{\gamma}{1+\gamma}, \tfrac{\delta}{1+\gamma}),$$ where $c$ is an arbitrary positive constant.
If we choose  $c ={1+\gamma}$,  we obtain the triplet
$$ \ell_0^{new}=1 =\ell_0+\ell_1, \quad    \ell_1^{new}=\gamma =\ell_1  \quad \text{and} \quad  \rho^{new}=\delta = \rho.$$
Proof of the remaining part of the lemma is analogous.
\end{proof}
\begin{lem}\label{interceptzustava}
Let the two word $u= u_1u_2\cdots u_n\in \{a,b,\alpha,  \beta\}^*$ and  $v= v_1v_2\cdots v_n\in \{a,b,\alpha,  \beta\}^*$ satisfy for each $k=1,2,\ldots,n$\ :
\begin{equation}\label{podobne}
\text {if } v_k \in \{a,b\} \text{, then } u_k\in \{a,b\}  \quad \quad \text{ and } \quad \quad \text{if } v_k \in \{\alpha,\beta\} \text{, then } u_k\in \{\alpha,\beta\}.
\end{equation}
If $\uu$ and $\vv$ are Sturmian words with the same slope, then the slopes of   $\varphi_u(\uu)$  and $\varphi _v(\vv)$ are equal.
\end{lem}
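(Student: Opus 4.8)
The plan is to reduce everything to how the \emph{length parameters} $\ell_0,\ell_1$ transform, since the slope $\gamma=\tfrac{\ell_1}{\ell_0+\ell_1}$ depends on these alone and not on the intercept $\rho$. The crucial point extracted from \Cref{image} is that the effect of an elementary morphism on $(\ell_0,\ell_1)$ is determined solely by whether the applied letter is Latin or Greek: both $\varphi_a$ and $\varphi_b$ send $(\ell_0,\ell_1)$ to $(\ell_0+\ell_1,\ell_1)$, while both $\varphi_\alpha$ and $\varphi_\beta$ send $(\ell_0,\ell_1)$ to $(\ell_0,\ell_0+\ell_1)$. Note that the hypothesis~\eqref{podobne} says exactly that $u_k$ and $v_k$ are of the same type (Latin or Greek) for every $k$.

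First I would record the induced action on the slope. Since both length transformations are linear and homogeneous in $(\ell_0,\ell_1)$, each descends to a well-defined map on the ratio $\gamma$. A short computation using \Cref{image} gives that applying any Latin morphism sends the slope $\gamma$ to $\tfrac{\gamma}{1+\gamma}$, and applying any Greek morphism sends $\gamma$ to $\tfrac{1}{2-\gamma}$. In both cases the new slope is a function of the old slope alone; in particular it is independent of the intercept $\rho$ and of which of the two letters of the given type is used.

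I would then finish by iterating this along the factorization $\varphi_u=\varphi_{u_1}\varphi_{u_2}\cdots\varphi_{u_n}$. Setting $\uu^{(n)}=\uu$ and $\uu^{(k-1)}=\varphi_{u_k}(\uu^{(k)})$ for $k=n,n-1,\dots,1$, so that $\uu^{(0)}=\varphi_u(\uu)$, the slope of $\uu^{(k-1)}$ is obtained from the slope of $\uu^{(k)}$ by the map attached to the type of $u_k$. Hence the slope of $\varphi_u(\uu)$ equals $f_{t_1}\circ f_{t_2}\circ\cdots\circ f_{t_n}$ applied to the slope of $\uu$, where $t_k$ is the type of $u_k$ and $f_{t}$ denotes the corresponding one of the two maps found above. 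Running the same computation for $\varphi_v(\vv)$ produces the very same composition of maps, because by~\eqref{podobne} the type of $v_k$ equals the type of $u_k$ for every $k$; and the inputs agree because $\uu$ and $\vv$ have the same slope. Therefore the slopes of $\varphi_u(\uu)$ and $\varphi_v(\vv)$ coincide.

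The argument has essentially no hard step; the only thing to check with a little care is that the induced slope maps are genuinely intercept-independent and depend only on the type of the applied letter, which is precisely the content of \Cref{image}. Alternatively, the whole statement can be phrased as an induction on $n$, applying \Cref{image} to the outermost morphism $\varphi_{u_1}$ in the inductive step.
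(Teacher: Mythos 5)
Your proof is correct and follows essentially the same route as the paper: both arguments reduce to the observation from \Cref{image} that the induced map on the slope depends only on whether the applied elementary morphism is Latin ($\gamma\mapsto\tfrac{\gamma}{1+\gamma}$) or Greek ($\gamma\mapsto\tfrac{1}{2-\gamma}$), and then iterate along the factorization. You merely make explicit the induction that the paper leaves implicit.
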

\begin{proof} By \Cref{image}, both morphisms $\varphi_b$ and $\varphi_a$  change the original slope
 $\tfrac{\ell_1}{\ell_0+\ell_1}$ to the same new slope   $\tfrac{\ell_1}{\ell_0+2\ell_1}$. And analogously, both morphisms $\varphi_\beta$ and $\varphi_\alpha$  change the  original slope
 $\tfrac{\ell_1}{\ell_0+\ell_1}$ to the same new slope  $\tfrac{\ell_1}{2\ell_0+\ell_1}$.
\end{proof}

\section{Sturmian morphisms non-closeable under derivation} \label{sec:nonderiv}

The following example shows that not every Sturmian morphism is closeable under derivation.

\begin{example}
Consider a Sturmian morphism $\Psi = \varphi_{ab\beta}$.
We have
\[
\Psi: \begin{cases}
0 \mapsto 100,\\
1 \mapsto 10010.
\end{cases}
\]
Let $\uu$ be its fixed point:
\[
\uu = 1001010010010010100 \dots .
\]
We claim that $\uu$ is a Sturmian word with parameters $\vec{x} = \left( \sqrt{3}-1, 2-\sqrt{3}, \frac{3-\sqrt{3}}{2} \right) $.

Let $\vv$ be the lower Sturmian word with parameters $\vec{x}$.
By \Cref{image},
\begin{itemize}
\item $\varphi_{\beta}(\vv)$ has parameters $\left( \sqrt{3}-1, 1, \frac{1+\sqrt{3}}{2} \right) $,
\item $\varphi_{b\beta}(\vv)$ has parameters $\left( \sqrt{3}, 1, \frac{1+\sqrt{3}}{2} \right) $,
\item $\varphi_{ab\beta}(\vv)$ has parameters $\left( \sqrt{3}+1, 1, \frac{3+\sqrt{3}}{2} \right) = \frac{1}{2-\sqrt{3}} \vec{x}$.
\end{itemize}
Thus, $\vv$ is fixed by $\Psi$, and so $\uu = \vv$.

Next, we show that the derived word with respect to $0 \in \L(\uu)$ is not fixed by any primitive substitution, which implies that $\Psi$ is not closeable under derivation.

The factor $0$ is not a prefix of $\uu$.
It is a prefix of $\sigma(\uu)$.
By~\Cref{shift}, the word $\sigma(\uu)$ has parameters $\left( \sqrt{3}-1, 2 - \sqrt{3}, \frac{7-3\sqrt{3}}{2} \right) $.
The return words to $0$ in $\uu$ (and $\sigma(\uu)$) are $r_0 = 0$ and $r_1 = 01$.
Thus, we may write
\[
\sigma(\uu) = r_0r_1r_1r_0r_1r_0r_1r_0r_1r_1r_0 \dots .
\]
Since $r_0 = \varphi_b(0)$ and $r_1 = \varphi(1)$, we obtain
\[
\sigma(\uu) = \varphi_b(\dd_\uu(0)).
\]
By \Cref{image}, the derived word $\dd_\uu(0)$ has parameters $\left( 2\sqrt{3}-3, 2-\sqrt{3},\frac{7-3\sqrt{3}}{2} \right) $.
In order to use \Cref{Yasutomi0}, we normalize the parameters to $\frac{1}{\sqrt{3}-1}  \left( 2\sqrt{3}-3, 2-\sqrt{3},\frac{7-3\sqrt{3}}{2} \right) $.
Using the notation of \Cref{Yasutomi0}, we have $\gamma = \frac{2-\sqrt{3}}{\sqrt{3}-1} = \frac{\sqrt{3}-1}{2}$ and $\rho = \frac{7-3\sqrt{3}}{2(\sqrt{3}-1)} = \frac{2\sqrt{3}-1}{2}$.
Considering the algebraic conjugates $\overline{\gamma} = \frac{-\sqrt{3}-1}{2}$ and $\overline{\rho} = \frac{-2\sqrt{3}-1}{2}$, we notice
\[
\overline{\gamma} < 0 \quad \text{ and } \quad \overline{\rho} < \overline{\gamma}.
\]
Therefore, the third condition of \Cref{Yasutomi0} is not satisfied, and thus $\dd_\vv(0) = \dd_\uu(0)$ is not fixed by a primitive substitution.
\end{example}

In the general case, we prove later the following theorem on Sturmian substitutions that are not closeable under derivation.

\begin{thm}\label{neniReflex}
Let $\psi= \varphi_w$ be a Sturmian morphism such that  $w \in \{a,b,\alpha,\beta\}^*$.
If at least three distinct letters from  $ \{a,b,\alpha,\beta\}$ occur in $w$, then $\psi$ is not closeable under derivation.
\end{thm}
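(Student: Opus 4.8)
The plan is to argue by contradiction: assume $\psi$ is closeable, so that it lies in a finite set $M$ closed under derivation. Then the derived word $\dd_\uu(v)$ of the fixed point $\uu$ of $\psi$ with respect to \emph{every} factor $v$ is fixed by a primitive Sturmian substitution, and hence, by \Cref{Yasutomi0} (equivalently \Cref{Yasutomi}), its normalized parameters $(\gamma_v,\delta_v)$ satisfy Yasutomi's three conditions; in particular $\overline{\delta_v}$ lies in the admissible interval determined by $\overline{\gamma_v}$. The goal is to exhibit a single factor $v$ for which this fails. By \Cref{prolong} it suffices to consider factors that are right special prefixes or bispecial factors that are not prefixes. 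The derived words to right special prefixes are the fixed points of $\Delta^j(\psi)$ by \Cref{thm:main_result_vetsina}, and these are automatically admissible, so the contradiction must come from a bispecial factor that is \emph{not} a prefix.

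To locate such a factor I first reduce to a normal form. Conjugating by $E$ if necessary (recall that $E$ exchanges lower and upper words, swaps $a\leftrightarrow\alpha$ and $b\leftrightarrow\beta$, and preserves closeability, while $\psi^2\in\mathcal M$), I may assume by \Cref{lem:normalized_words} that the normalized name $N(\psi)=w$ begins with $a^i\beta$ for some $i\ge 0$; the case $\alpha^i b$ is symmetric. Writing $w=a^i\beta w'$ and using that $\varphi_\beta$ always produces words starting with $1$ together with the conjugacy $0\varphi_a(x)=\varphi_b(x)0$ from the proof of \Cref{jinak}, I get $\sigma(\uu)=\varphi_b^{\,i}(\sigma(\ww))$, where $\ww=\varphi_\beta\varphi_{w'}(\uu)$. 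Then \Cref{jinak}\eqref{Claim1} shows that $0^i$ is a bispecial factor that is not a prefix of $\uu$ and whose derived word equals $\sigma(\varphi_\beta\varphi_{w'}(\uu))$. (The remaining shapes of $N(\psi)$, namely those with $i=0$, are handled by the analogous construction built on a power of the letter $1$ and the return words $\{1,10\}$, as in the introductory example.)

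It then remains to compute the parameters of this derived word and to contradict Yasutomi. Following the action of $\varphi_{w'}$ and $\varphi_\beta$ on $(\ell_0,\ell_1,\rho)$ by \Cref{image}, and the shift by \Cref{shift}, I obtain the slope $\gamma_v$ and intercept $\delta_v$ of $\dd_\uu(0^i)$ and pass to the Galois conjugates. The decisive point is that, relative to the admissible interval $[1-\overline{\gamma_v},\overline{\gamma_v}]$ (for $\overline{\gamma_v}>1$; the case $\overline{\gamma_v}<0$ is symmetric), the combined effect of the $\varphi_\beta$-step and the shift displaces $\overline{\delta_v}$ strictly past the endpoint $\overline{\gamma_v}$, so condition (3) of \Cref{Yasutomi0} is violated and $\dd_\uu(0^i)$ is fixed by no primitive substitution. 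This is exactly where the hypothesis is consumed: since $w$ contains at least three distinct letters, \Cref{vsechny4}\eqref{it:vsechny4:4} ensures that the relevant derived data genuinely involve both $b$ and $\beta$, which is what makes the conjugate intercept overshoot; if only two letters occurred, then by \Cref{vsechny4}\eqref{it:vsechny4:1} the operation $\Delta$ would act as a mere cyclic shift and every derived word would remain admissible.

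I expect the main obstacle to be precisely this final conjugate estimate, made uniform over the case analysis of the shape of $N(\psi)$: one must verify in each case that the accumulated displacement of $\overline{\delta_v}$ \emph{strictly} exceeds the half-width of the admissible interval, rather than only reaching its boundary. Controlling this requires tracking how $\varphi_{w'}$ transforms the conjugate parameters — for which \Cref{interceptzustava} and \Cref{image} provide the bookkeeping, the slope depending only on the Latin/Greek pattern and the intercept on the actual choice of letters — and isolating the contribution of the long interval produced by the $\beta$ (resp.\ $\alpha$) letter. Once this single strict inequality is secured in every case, the contradiction is complete and $\psi$ cannot be closeable.
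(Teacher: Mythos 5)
Your overall strategy --- contradict Yasutomi's criterion for the derived word of a well-chosen non-prefix bispecial factor --- is the paper's strategy too, and your identity $\dd_{\uu}(0^i)=\sigma(\varphi_\beta\varphi_{w'}(\uu))$ obtained from \Cref{jinak} is exactly the reduction the paper performs. But there are two genuine gaps. First, your construction needs $N(\psi)=a^i\beta w'$ with $i\geq 1$ (otherwise $0^i=\varepsilon$ and the derived word is $\uu$ itself), and the case $i=0$ is not ``handled by the analogous construction'': a normalized name such as $\beta b a$ begins with $\beta$ and is not of the form $\alpha^jb\cdots$ with $j\geq 1$ either, so neither a power of $0$ nor a power of $1$ is available as a bad factor of the fixed point itself. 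The paper escapes this by first passing to a derived word to a prefix, i.e.\ iterating $\Delta$ until --- by \Cref{vsechny4}, which guarantees that $\Delta$ eventually acts as a cyclic shift on a name containing both $b$ and $\beta$ and at least one of $a,\alpha$ --- the name acquires the form $a^k\beta e b$ with $k\geq 1$ (for $\beta ba$ this happens only at $\Delta^2(\beta ba)=a\beta b$). The bad factor $0^k$ then lives inside that derived word, not inside the fixed point of $\psi$, and the contradiction uses the closure of $M$ at two levels. This extra level is not cosmetic: to stay at the level of the original fixed point you would need a composition law for derived words of arbitrary (non-prefix) factors, which the paper never proves and which your plan does not supply.

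Second, and more seriously, your ``decisive point'' --- that the conjugate intercept is displaced \emph{strictly} past the endpoint of the admissible interval --- cannot be established by the parameter bookkeeping you describe. Writing $\vv$ for the word with name $\beta e b a^k$ and parameters $\theta,1,\rho$, the hypothesis that $\sigma(\vv)$ is substitution-invariant yields only $\overline{\rho}\leq 1+\overline{\theta}$, while Yasutomi applied to $\vv$ itself gives only $\overline{\rho}\leq 1$; no inequality available from these data forces $\overline{\rho}>1+\overline{\theta}$, so the boundary case survives every conjugate estimate. The paper's \Cref{neniFixed} closes this by applying Yasutomi a \emph{second} time, to the word $\varphi_{ba^k}(\vv)$, which is fixed by the conjugate substitution $\varphi_{ba^k\beta e}$; this yields the opposite bound $\overline{\rho}\geq 1+\overline{\theta}$, hence equality, hence $\rho=1+\theta$ exactly (conjugation is an automorphism of $\Q(\theta)$, so injective). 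By \Cref{vRovinePulka2} this would make $\vv$ a fixed point of a substitution with name in $\{a,\beta\}^*$, which contradicts the rigidity of Sturmian words since $\varphi_{\beta e b a^k}$ and its powers lie outside $\langle\varphi_a,\varphi_\beta\rangle$ by \Cref{thm:relations}. So the boundary case you flag at the end is not a technicality to be estimated away uniformly over cases: it is consistent with all the inequalities and must be excluded by the rigidity argument, which is entirely absent from your proposal.
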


First we prepare several auxiliary statements exploited in the proof of the above theorem.

\begin{lem}\label{vRovinePulka1}
Let  a Sturmian word   $\uu$  with parameters  $(\ell_0, \ell_1,\rho)$  be fixed   by a primitive morphism $\varphi_w$.
\begin{enumerate}[(1)]
\item   If  $w \in \{ b,\beta\}^*$, then   $\rho = \ell_1$; \label{it:vRovinePulka1}
\item If  $w \in \{ b,\alpha\}^*$,  then $\rho = 0$; \label{it:vRovinePulka2}
\item  If  $w \in \{ a,\beta\}^*$, then $\rho =\ell_0+\ell_1 $; \label{it:vRovinePulka3}
\item  If  $w \in \{ a,\alpha\}^*$, then  $\rho = \ell_0$. \label{it:vRovinePulka4}
 \end{enumerate}
\end{lem}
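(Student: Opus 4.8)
The plan is to follow how the parameters $(\ell_0,\ell_1,\rho)$ are transformed by $\varphi_w$ via \Cref{image}, and then to exploit that $\uu$ is a fixed point of $\varphi_w$. Writing $w = w_0 w_1 \cdots w_{n-1}$, the morphism $\varphi_w$ is a composition of the elementary morphisms, so the parameters of $\varphi_w(\uu)$ are obtained from those of $\uu$ by applying the transformation of \Cref{image} once for each letter of $w$; denote the resulting triple by $(\ell_0', \ell_1', \rho')$. Since $\varphi_w(\uu) = \uu$ and the parameters of a Sturmian word are determined only up to a common positive multiple, there is a constant $c > 0$ with $(\ell_0', \ell_1', \rho') = c\,(\ell_0, \ell_1, \rho)$.

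The crucial point is that in each of the four cases there is a linear combination of $\ell_0, \ell_1, \rho$ left invariant by both elementary morphisms allowed in that case. A direct inspection of \Cref{image} shows:
\begin{itemize}
\item $\varphi_b$ and $\varphi_\beta$ both preserve $\rho - \ell_1$;
\item $\varphi_b$ and $\varphi_\alpha$ both preserve $\rho$;
\item $\varphi_a$ and $\varphi_\beta$ both preserve $\rho - \ell_0 - \ell_1$;
\item $\varphi_a$ and $\varphi_\alpha$ both preserve $\rho - \ell_0$.
\end{itemize}
(For example, $\varphi_\beta$ sends $(\ell_0, \ell_1, \rho)$ to $(\ell_0, \ell_0 + \ell_1, \rho + \ell_0)$, so $\rho - \ell_1$ becomes $(\rho + \ell_0) - (\ell_0 + \ell_1) = \rho - \ell_1$.) Let $q$ be the value of the relevant quantity computed from $(\ell_0, \ell_1, \rho)$ and $q'$ its value computed from $(\ell_0', \ell_1', \rho')$. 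Invariance under every letter of $w$ gives $q' = q$, whereas the scaling relation together with the linearity of the quantity gives $q' = c\,q$. Hence $(c-1)\,q = 0$.

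It remains to check that $c \neq 1$. Every elementary morphism strictly increases the total length $\ell_0 + \ell_1$ (by $\ell_1 > 0$ in the case of $\varphi_a, \varphi_b$ and by $\ell_0 > 0$ in the case of $\varphi_\alpha, \varphi_\beta$), and $w$ is non-empty since $\varphi_w$ is primitive. Therefore $\ell_0' + \ell_1' > \ell_0 + \ell_1$, while $\ell_0' + \ell_1' = c\,(\ell_0 + \ell_1)$, forcing $c > 1$. We conclude $q = 0$, which gives $\rho = \ell_1$ when $w \in \{b,\beta\}^*$, $\rho = 0$ when $w \in \{b,\alpha\}^*$, $\rho = \ell_0 + \ell_1$ when $w \in \{a,\beta\}^*$, and $\rho = \ell_0$ when $w \in \{a,\alpha\}^*$.

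The argument is essentially mechanical once the four invariant quantities are spotted; the only step needing genuine attention is the verification that $c > 1$, since it is exactly this strict inequality that lets us pass from $(c-1)\,q = 0$ to $q = 0$. This strictness comes from the non-triviality of $w$, which is guaranteed by the primitivity of $\varphi_w$.
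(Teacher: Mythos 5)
Your proof is correct, and it rests on the same key observation as the paper's: \Cref{image} implies that each of the four pairs of elementary morphisms preserves one of the affine conditions $\rho=\ell_1$, $\rho=0$, $\rho=\ell_0+\ell_1$, $\rho=\ell_0$ (the paper phrases these as four invariant planes $P_1,\dots,P_4$ in parameter space). Where you diverge is in how the conclusion is extracted. The paper exhibits explicit Sturmian words with parameters in the relevant plane and with the slope of $\uu$, checks that they are fixed by $\varphi_w$, and then identifies $\uu$ with one of them using the count of fixed points (one fixed point in cases (1)--(3), two in case (4), which is why that case is treated separately); this uses \Cref{interceptzustava} and only the easy direction of the parametrization (proportional parameters give the same word). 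You instead run an eigenvector-style argument: the fixed-point equation forces the parameter triple of $\varphi_w(\uu)$ to be $c$ times that of $\uu$, the invariant linear functional $q$ satisfies $q=cq$ with $c>1$ because the total length $\ell_0+\ell_1$ strictly grows, hence $q=0$. This is cleaner and handles all four cases uniformly without worrying about the number of fixed points, but it silently uses the converse direction of the parametrization, namely that a lower (resp.\ upper) Sturmian word determines its triple $(\ell_0,\ell_1,\rho)$ up to a common positive factor. That injectivity is a standard fact for irrational slope (the slope is the frequency of $1$, and the normalized intercept is recovered from the coding by equidistribution), but it is not stated in the paper and deserves a sentence of justification in your write-up.
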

\begin{proof}
We define four planes
\[
\begin{aligned}
P_1 &=  \left \{(x,y,z) \in \R^3 \colon  z=y \right\}, & \quad P_2 &= \left \{(x,y,z) \in \R^3 \colon z=0 \right\}, \\
P_3 &= \left \{(x,y,z) \in \R^3 \colon  z=x+y   \right\}, &  \text{ and } \quad  P_4 &= \left \{(x,y,z) \in \R^3 \colon  z=x   \right\}.
\end{aligned}
\]
Applying \Cref{image}, it is straightforward to verify that if the triplet of parameters $(\ell_0,  \ell_1,\rho)$ of
a Sturmian word $\vv $  belongs
\begin{enumerate}[(i)]
\item to $P_1$, then the  parameters of $\varphi_b(\vv)$  and the  parameters of $\varphi_\beta(\vv)$  belong to $P_1$;
\item to  $P_2$, then the  parameters of $\varphi_b(\vv)$  and the  parameters of $\varphi_\alpha(\vv)$   belong to $P_2$;
\item to  $P_3$, then the  parameters of $\varphi_a(\vv)$  and the  parameters of $\varphi_\beta(\vv)$   belong to $P_3$;
\item  to $P_4$, then the  parameters of $\varphi_a(\vv)$  and the  parameters of $\varphi_\alpha(\vv)$   belong to $P_4$.
\end{enumerate}

Let us start with \Cref{it:vRovinePulka4} and assume  that $\uu$ is fixed by a morphism $\varphi_w$ with  $w \in \{ a,\alpha\}^*$.
Since the word $\varphi_w(0)$ has a prefix $0$ and  $\varphi_w(1)$ has a prefix $1$, the morphism $ \varphi_w$  has two fixed points. Denote $\uu^{(1)}$  the lower Sturmian word  coding the two interval exchange with the domain  $[0, \ell_0+\ell_1)$ and the initial point $\rho =\ell_0 \in I_1=[\ell_0, \ell_0+\ell_1)$.
Let $\uu^{(0)}$  denote  the upper  Sturmian word  coding two interval exchange  with the domain  $(0, \ell_0+\ell_1]$ and the initial point $\rho =\ell_0 \in I_0$.
It means that  $\uu$, $\uu^{(0)}$  and $\uu^{(1)}$ have the same slope $\gamma = \tfrac{\ell_1}{\ell_0+\ell_1}$.
The word $\uu$ is fixed  by  $\varphi_w$ and thus the slopes $\varphi(\uu)$ and $\uu$ are the same, namely $\gamma$.
By \Cref{interceptzustava},   the slope of $\varphi_w(\uu^{(0)})$  and $\varphi_w(\uu^{(1)})$  is $\gamma$ as well.
Moreover, parameters   of  $\uu^{(0)}$  and $\uu^{(1)}$ belong to the plane $P_4$, which is preserved under the action of $\varphi_w$.
It follows that    $\uu^{(0)}$  and $\uu^{(1)}$  are the two fixed points of $\varphi_w$ and thus $\uu$ equals  $\uu^{(0)}$  or $\uu^{(1)}$.
Both these words have the initial point $\rho=\ell_0$.

\medskip

The proof of Items \ref{it:vRovinePulka1}--\ref{it:vRovinePulka3} is analogous. The only difference is that the morphism $\varphi_w$ has only one fixed point.

\end{proof}

\begin{lem}\label{vRovinePulka2} Let $\theta$ be a quadratic irrational such that $0<\theta <1$, $\overline{\theta}<0$  and let $\rho \in \mathbb{R}$, $0\leq \rho \leq 1+\theta$.
Let $\uu$  be a Sturmian word  with parameters $\ell_0 = 1, \ell_1 =\theta$ and $\rho$, or with parameters  $\ell_0=\theta, \ell_1= 1$ and $\rho$.
\begin{enumerate}[(1)]
\item  If  $\rho = \ell_1$, then  $\uu$ is fixed by a primitive morphism $\varphi_w$ with  $w \in \{ b,\beta\}^*$; \label{it:vRovine1}
\item  If  $\rho = 0$, then  $\uu$ is fixed by a primitive morphism $\varphi_w$ with   $w \in \{ b,\alpha\}^*$; \label{it:vRovine2}
\item  If   $\rho =\ell_0+\ell_1 $,  then  $\uu$ is fixed by a primitive morphism $\varphi_w$ with  $w \in \{ a,\beta\}^*$; \label{it:vRovine3}
\item  If  $\rho = \ell_0$, then  $\uu$ is fixed by a primitive morphism $\varphi_w$ with    $w \in \{ a,\alpha\}^*$. \label{it:vRovine4}
 \end{enumerate}
\end{lem}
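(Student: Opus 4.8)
The plan is to reverse the reasoning behind \Cref{vRovinePulka1}. There one checks that each plane $P_i$ is forward-invariant under the two associated elementary morphisms; here I want to \emph{desubstitute} $\uu$ within its plane using only those two morphisms. Throughout I normalize so that the parameters of $\uu$ lie on the plane $P_i$ attached to the item under consideration, and I use that on each $P_i$ the slope together with the defining equation of the plane pins down the Sturmian word: the standard word $\mathbf{c}_\gamma$ on $P_1$ (where $\rho=\ell_1$), the lower word with $\rho=0$ on $P_2$, the upper word with $\rho=\ell_0+\ell_1$ on $P_3$, and, on $P_4$, one of the two words with $\rho=\ell_0$.

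First I would establish a single desubstitution step: if $\uu$ has parameters on $P_i$, then $\uu=\varphi_x(\uu')$ for one of the two generators $x$ associated with $P_i$, with $\uu'$ again on $P_i$. This is a direct computation with \Cref{image}, governed by a uniform rule: for slope below $\tfrac12$ one removes the Latin generator of the pair, and for slope above $\tfrac12$ the Greek one. Checking the three coordinates shows that the preimage prescribed by \Cref{image} stays on $P_i$, while the other generator of the same parity either leaves $P_i$ or forces an out-of-range value of $\rho$. For instance on $P_1$ with $\gamma<\tfrac12$ the assignment $(\ell_0,\ell_1,\ell_1)\mapsto(\ell_0-\ell_1,\ell_1,\ell_1)$ is the inverse of $\varphi_b$ and keeps $\rho'=\ell_1'$, whereas $\varphi_a^{-1}$ would send $\rho$ to $0$ (onto $P_2$); dually one uses $\varphi_\beta^{-1}$ when $\gamma>\tfrac12$. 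By the correspondence between normalized parameters and Sturmian words, the word $\uu'$ with these parameters indeed satisfies $\uu=\varphi_x(\uu')$, and its slope is the image of $\gamma$ under the Gauss (continued-fraction) map.

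Iterating the step yields $\uu=\varphi_{w_0w_1\cdots w_{j-1}}(\uu_j)$ with every $\uu_j$ on $P_i$ and every $w_t$ in the prescribed pair, the slopes $\gamma_j$ being the continued-fraction iterates of $\gamma$. Since $\gamma$ is quadratic, $(\gamma_j)$ is eventually periodic, and because the word on $P_i$ is determined by its slope, $(\uu_j)$ is eventually periodic too. The remaining, and main, point is to upgrade eventual to \emph{pure} periodicity, so that $\uu_p=\uu_0=\uu$ for the period length $p$: this is exactly where $\overline{\theta}<0$ enters, as it says precisely that $\gamma$ is a Sturm number (the notion introduced after \Cref{Yasutomi0}), equivalently, by Galois' theorem on purely periodic continued fractions, that the continued fraction of $\gamma$ is purely periodic. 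One may instead verify conditions (1)--(3) of \Cref{Yasutomi} for the value of $\rho$ at hand: all of $0,\theta,1,1+\theta$ lie in $\mathbb{Q}(\theta)$ and satisfy $\overline{\theta}\le\overline{\rho}\le 1$ because $\overline{\theta}<0$, so $\uu$ is fixed by a primitive morphism and therefore lies in the periodic part. Setting $w=w_0\cdots w_{p-1}$ then gives $\uu=\varphi_w(\uu)$ with $w$ over the required two-letter alphabet; here $\varphi_w$ is primitive because the irrationality of $\gamma$ forces both generators of the pair to occur in $w$ (a period using a single generator would produce the rational slope $0$ or $1$), i.e. $w$ contains one Latin and one Greek letter.

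I expect the pure-periodicity step to be the real obstacle, together with the boundary bookkeeping. On $P_2$ and $P_3$ the intercept sits at an endpoint of $I$, so one must work consistently with lower, respectively upper, Sturmian words; this is legitimate because \Cref{image} and \Cref{shift} are stated for both. On $P_4$ the value $\rho=\ell_0$ is the interior partition point, where the lower and upper words differ and each must be treated separately, which is the two-fixed-point phenomenon already visible in the proof of \Cref{vRovinePulka1} and controlled there via \Cref{interceptzustava}. The four items are entirely parallel, differing only in which generator pair is plane-invariant, so I would write the $P_1$ case in full and merely indicate the sign changes for the others.
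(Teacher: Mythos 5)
Your route is genuinely different from the paper's: you desubstitute $\uu$ directly inside its invariant plane $P_i$, producing a restricted S-adic expansion over the prescribed generator pair, whereas the paper treats only the standard case this way (by citing \cite{Crisp}) and then transfers to the other three planes by swapping letters in the name $u\in\{b,\beta\}^*$ and using \Cref{interceptzustava} together with \Cref{vRovinePulka1} to identify the fixed point of the swapped morphism with $\uu$. Your single desubstitution step is sound --- images of $\varphi_a,\varphi_b$ always have slope below $\tfrac12$ and images of $\varphi_\alpha,\varphi_\beta$ above, so the Latin/Greek branch is forced, and the plane computation via \Cref{image} is correct --- and you rightly isolate pure periodicity of the resulting expansion as the crux.

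That crux is exactly where the argument as written does not hold up. Galois' theorem concerns the ordinary (multiplicative) continued fraction and characterizes pure periodicity by $x>1$ and $-1<\overline{x}<0$; the map governing your slopes is the additive (Farey) algorithm $\gamma\mapsto\gamma/(1-\gamma)$ or $\gamma\mapsto(2\gamma-1)/\gamma$, and the assertion that $\overline{\gamma}\notin(0,1)$ is equivalent to pure periodicity of that expansion is not Galois' theorem but essentially the content of \cite{Crisp} and \cite{Allauzen} --- i.e.\ the hard part of what you set out to reprove. Your fallback, ``$\uu$ is fixed by a primitive morphism and therefore lies in the periodic part,'' is a non sequitur as stated: \Cref{Yasutomi} gives you a fixing morphism $\varphi_v$, hence \emph{some} purely periodic S-adic representation, but you must still argue that this forces \emph{your} slope orbit to return to $\gamma_0$. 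The repair is available --- since the Latin/Greek type of each desubstitution step is determined by the slope, $\uu=\varphi_v(\uu)$ forces the Farey orbit of $\gamma_0$ to close up after $|v|$ steps, and on each plane the slope (plus, on $P_4$, the starting letter) determines the word --- but that link is the missing idea. Supply it, or simply cite \cite{Crisp} for the standard plane and transfer as the paper does; without one of these the proof is incomplete at its central step.
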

\begin{proof}
We assume without loss of generality that $\uu$ has parameters $\ell_0 = 1, \ell_1 =\theta$ and $\rho$. In particular, the slope of $\uu$ is $\gamma = \tfrac{\theta}{1+\theta}$.
  The parameter $\theta$ and all four possible choices  for the parameter $\rho$ from the set  $\{\ell_1,0,\ell_0+\ell_1, \ell_0\} = \{ 1, \theta,  0, \theta+1\}$ satisfy the  Yasutomi condition in \Cref{Yasutomi} and thus  in all four cases $\uu$ is fixed by a primitive morphism.

\Cref{it:vRovine1}:  If $\rho = \ell_1=\theta $, then $\uu$ is a standard Sturmian word.   By  \cite{Crisp},  it is fixed by a standard substitution, that is, by a substitution $\varphi_u$ with  $u= u_1u_2\cdots u_n \in \{b, \beta\}^*$.

\Cref{it:vRovine2}:
We use the word $u= u_1u_2\cdots u_n \in \{b, \beta\}^*$ from the proof of \Cref{it:vRovine1} to define the word $v= v_1v_2\cdots v_n \in \{b, \alpha\}^*$.
We set $v_k= b$ if $u_k= b$, and $v_k= \alpha$ if $u_k= \beta$.
Let $\vv$ denote the fixed point of $\varphi_v$
 By \Cref{interceptzustava},  the  fixed point of $\varphi_v$ has the same slope as the fixed point of $\varphi_u$, namely  $\gamma = \tfrac{\theta}{1+\theta}$.
 By  \Cref{vRovinePulka1},  the third parameter of $\vv$ is $0$.
 It means that $\vv$ has parameters  $(\ell_0, \ell_1, 0)$  and coincides with $\uu$.
 Consequently, the word $\uu$ is fixed by $\varphi_v$ with $v\in \{b, \alpha\}^*$.

The proof of the remaining two parts is analogous.
\end{proof}

\begin{lem} \label{neniFixed}
Let $\vv$ be a Sturmian word fixed by a primitive substitution $\psi = \varphi_{w}$, where $w =\beta e b a^k$ for some $e\in \{a,b,\alpha,\beta\}^*$ and $k\geq 0$.
No primitive substitution  fixes the Sturmian word  $\sigma(\vv)$.
\end{lem}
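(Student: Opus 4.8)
The plan is to determine the parameters of $\sigma(\vv)$ and contradict \Cref{Yasutomi}. Since $w$ begins with $\beta$ and $\varphi_\beta$ sends every nonempty word to a word starting with the letter $1$, the fixed point $\vv$ starts with $1$; writing $\vv=\varphi_\beta(\vv_1)$ and applying \Cref{image} to this last step gives $\ell_1>\ell_0$ and $\rho\in[\ell_0,\ell_0+\ell_1)$. Hence by \Cref{shift} the word $\sigma(\vv)$ has parameters $(\ell_0,\ell_1,\rho-\ell_0)$, the same slope as $\vv$. Normalising the longer interval to length $1$, I write the parameters of $\vv$ as $(\theta,1,\tilde\rho)$ with $\theta=\ell_0/\ell_1\in(0,1)$, so that $\sigma(\vv)$ has parameters $(\theta,1,\tilde\rho-\theta)$. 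As $\vv$ is fixed by a primitive morphism, \Cref{Yasutomi} gives $\overline\theta<0$ and $\overline\theta\le\overline{\tilde\rho}\le 1$. Since $\overline{\tilde\rho}-\overline\theta\ge\overline\theta\ge 2\overline\theta$, the lower inequality of the third condition of \Cref{Yasutomi} holds automatically for $\sigma(\vv)$, so everything reduces to showing that the upper inequality fails, i.e.\ that $\overline{\tilde\rho}-\overline\theta>1$.

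By \Cref{image} each $\varphi_x$ acts on the parameter triple $(\ell_0,\ell_1,\rho)$ by an integer matrix $M_x$, and the slope block of $M_w=M_\beta M_e M_b M_a^{k}$ has determinant $1$. Thus $M_w$ has a Perron eigenvalue $\lambda>1$ whose eigenvector is the triple of $\vv$, and a conjugate eigenvalue $\lambda'=1/\lambda\in(0,1)$; since $M_w$ is rational, the Galois automorphism sends the first eigenvector to the second, so the conjugate triple $\mathbf p:=(\overline\theta,1,\overline{\tilde\rho})$ is the $\lambda'$-eigenvector normalised to middle coordinate $1$. The target $\overline{\tilde\rho}-\overline\theta>1$ says precisely that $\mathbf p$ lies strictly on one side of the plane $P_3$ from the proof of \Cref{vRovinePulka1}, i.e.\ that $\phi_3(\mathbf p)<0$ for the linear form $\phi_3=\pi_1+\pi_2-\pi_3$ (where $\pi_i$ are the coordinate projections). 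I would evaluate $\phi_3(\mathbf p)$ from the eigenrelation $\phi_3(M_w\mathbf p)=\lambda'\phi_3(\mathbf p)$, peeling $w=\beta e b a^{k}$ from the left using the invariances $\phi_3\circ M_\beta=\phi_3\circ M_a=\phi_3$ and $\phi_3\circ M_b=\phi_3+\pi_2$ (these encode that $\varphi_\beta,\varphi_a$ preserve $P_3$ while $\varphi_b$ does not). Writing $\phi_3\circ M_e=\phi_3+s\pi_1+t\pi_2$ with integers $s,t\ge 0$, this collapses to the middle block and yields $(\lambda'-1)\,\phi_3(\mathbf p)=s\overline\theta+1+s(k+1)+t$.

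Because $\lambda'-1<0$, it remains to prove the numerator $s\overline\theta+1+s(k+1)+t$ is positive. Substituting $\overline\theta=(\lambda'-s')/r$ read off from the slope block $\left(\begin{smallmatrix}p&q\\ r&s'\end{smallmatrix}\right)=A_w$, the numerator becomes $\tfrac1r\big(s\lambda'+r(1+s(k+1)+t)-s s'\big)$, and a direct simplification using $s'=r(k+1)+R_2$, $s=R_1-c_{e,1}-1$, $t=R_2-c_{e,2}-1$ (where $R_1,R_2$ are the column sums of $A_e$ and $(c_{e,1},c_{e,2})$ is the bottom row of $M_e$) reduces the last two terms to the determinant-like quantity $D(e):=(c_{e,1}+1)R_2-R_1c_{e,2}$. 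The main obstacle is thus the purely combinatorial claim $D(e)\ge 1$ for every $e\in\{a,b,\alpha,\beta\}^*$; everything else is routine bookkeeping with the matrices $M_x$.

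I expect to settle this obstacle by induction on $|e|$, appending a letter on the right. From the transformation rules of $R_1,R_2,c_{e,1},c_{e,2}$ under $M_x$ one checks $D(ea)=D(e\alpha)=D(e)$, while $D(eb)=D(e)+R_1$ and $D(e\beta)=D(e)+R_2$, with $R_1,R_2\ge 1$ always; since $D(\varepsilon)=1$, this forces $D(e)\ge 1$. Then $s\lambda'+D(e)\ge 1>0$ because $s\ge 0$ and $\lambda'>0$, whence $\phi_3(\mathbf p)<0$, i.e.\ $\overline{\tilde\rho}-\overline\theta>1$. Consequently $\sigma(\vv)$ violates the third condition of \Cref{Yasutomi}, so no primitive substitution fixes $\sigma(\vv)$.
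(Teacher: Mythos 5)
Your proposal is correct, and I checked the key identities: the covariance relations $\phi_3\circ M_\beta=\phi_3\circ M_a=\phi_3$, $\phi_3\circ M_b=\phi_3+\pi_2$, $\phi_3\circ M_\alpha=\phi_3+\pi_1$ all hold, the bottom row of $A_w=A_\beta A_e A_a^{k+1}$ is indeed $(R_1,\,R_1(k+1)+R_2)$ so that $r=R_1$ and your algebraic reduction of the numerator to $\tfrac{1}{R_1}\bigl(s\lambda'+D(e)\bigr)$ is exact, and the recursions $D(ea)=D(e\alpha)=D(e)$, $D(eb)=D(e)+R_1$, $D(e\beta)=D(e)+R_2$ with $D(\varepsilon)=1$ and $R_1,R_2\ge 1$ give $D(e)\ge 1$. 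However, your route is genuinely different from the paper's. The paper also starts by writing the intercept condition for $\sigma(\vv)$ via \Cref{Yasutomi}, but instead of computing $\overline{\rho}-\overline{\theta}$ directly it plays a second application of \Cref{Yasutomi} against the first: splitting $w=(\beta e)(ba^k)$, the word $\varphi_{ba^k}(\vv)$ is fixed by the conjugated substitution $\varphi_{ba^k}\varphi_{\beta e}$, and its Yasutomi inequalities force $\overline{\rho}\ge 1+\overline{\theta}$, while the contradiction hypothesis forces $\overline{\rho}\le 1+\overline{\theta}$; the resulting boundary case $\rho=\ell_0+\ell_1$ is then excluded using \Cref{vRovinePulka2}, the monoid presentation (\Cref{thm:relations}), and the rigidity of Sturmian words cited from the literature. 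Your argument buys a strictly stronger conclusion ($\overline{\tilde\rho}-\overline{\theta}>1$, so the Yasutomi condition fails outright rather than only after excluding an equality case) and avoids any appeal to rigidity, at the price of a longer explicit matrix computation and the auxiliary induction on $D(e)$; the paper's argument is shorter but rests on the external rigidity result. If you write yours up, do spell out the verification that $s,t\ge 0$ and the identification of the conjugate parameter triple with the $\lambda'$-eigenvector (i.e., that the intercept of a Sturmian word with irrational slope is determined by the word up to the common scaling), since those are the only places where the plan is currently stated rather than proved.
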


\begin{proof}
As $w$ starts with $\beta$,  the letter $1$ is the more frequent letter in $\vv$ and $1$ is the starting letter of $\vv$.
Thus, $\vv$ is coding of the interval exchange with parameters  $\ell_0=\theta \in (0,1)$, $\ell_1 = 1$ and  $\rho$ satisfying  $\theta \leq \rho \leq 1+\theta$.
The word $\vv$  can be written in the form $1\vv'$, where $\vv' = \sigma(\vv)$.
By \Cref{shift}, the word $\vv'$ has parameters $\ell_0'=\theta$, $\ell_1' = 1$ and  $\rho'=\rho-\theta$.

We prove by contradiction that no substitution fixes $\vv'$.
Let us assume that $\vv'$ is fixed by a primitive substitution.
\Cref{Yasutomi} gives
\begin{equation} \label{nerovnosti}
\overline{\theta}\leq \overline{ \rho} - \overline{\theta}\leq 1\quad \text{and} \quad \overline{\theta} < 0.
 \end{equation}
Put  $w^{(1)} = \beta e$ and $w^{(2)} = ba^k$ and denote $\psi_1=\varphi_{w^{(1)}}$ and $\psi_2=\varphi_{w^{(2)}}$.
Clearly, $\psi = \psi_1\circ\psi_2$ and
\[
\vv = \psi_1\left(\psi_2(\vv)\right) \ \ \Longrightarrow \ \  \psi_2(\vv) = \psi_2\psi_1\left(\psi_2(\vv)\right).
\]
Thus the word  $\vv''=\psi_2(\vv)$ is fixed by the  primitive substitution $\psi_2\circ\psi_1$.
By \Cref{image}, the word $\vv'' $  has parameters $ 1+k +\theta, 1,  k+\rho$, which we normalize to
$\ell_0'' =1, \ell_1'' = \tfrac{1}{1+k +\theta}$ and $ \rho'' =\tfrac{ k+\rho}{1+k +\theta}$.
These parameters  satisfy the condition given by \Cref{Yasutomi}, i.e.,
\begin{equation}\label{nerovnosti2}
\frac{1}{1+k +\overline{\theta}}<0
\quad \text{ and } \quad  \frac{1}{1+k +\overline{\theta}}  \leq  \frac{ k+\overline{\rho}}{1+k +\overline{\theta}} \leq  1.
 \end{equation}
By  \eqref{nerovnosti2}   and  \eqref{nerovnosti}  we obtain $\overline{\rho} \leq \overline{\theta} +1$ and $\overline{\rho} \geq \overline{\theta} +1$, respectively.
It means that $\rho = 1+\theta = \ell_0+\ell_1$.
By \Cref{it:vRovine3} of \Cref{vRovinePulka2}, the word $\vv$ is fixed by a substitution $\varphi_u$ with $u \in \{\beta, a\}^*$.
Since such word $u$ cannot be rewritten using \eqref{eq:relations}, by \Cref{thm:relations} the substitution $\varphi_u$ and every its power are elements of $\langle \varphi_\beta, \varphi_a \rangle$.
Similarly by \Cref{thm:relations} we obtain $\varphi_w^k \not \in \langle \varphi_\beta, \varphi_a \rangle$ for all $k \in \N$.
Since every Sturmian word is rigid (see \cite{RiSe12}), we obtain a contradiction as the Sturmian word $\vv$ cannot be fixed by both $\varphi_u$ and $\varphi_w$.
\end{proof}

\begin{proof}[Proof of \Cref{neniReflex}]
Let  ${\bf w}$  be a fixed point of $\psi = \varphi_w$.
By \Cref{thm:main_result_vetsina}, the  derived word to any prefix of  ${\bf w}$  is fixed by a substitution $\Delta^i(w)$ for some $i$.
By \Cref{vsechny4}  there exists $i_0\in \mathbb{N}$ such  that  for each $i>i_0$,  both letters $b$ and $\beta$ occur in $\Delta^i(w)$ and $\Delta(\Delta^i(w)) = {\rm cyc}_j(\Delta^i(w))$ for some $j$. By the same remark,  at least one of the letters from $\{a,\alpha\}$ occurs in $\Delta^i(w)$.
Therefore, there exists $i>i_0$ such that
$$\text{for some } \  e \in\{a,b,\alpha,\beta\}^*   \  \text{and} \  k\in \mathbb{N}, k\geq 1 ,\   \  \text{ either  }
\Delta^i(w) = a^k\beta e b \ \text{ or } \ \Delta^i(w) = \alpha^kb e \beta.$$
We first treat the case $ \Delta^i(w) = a^k\beta e b $.
Let $\uu$ denote the fixed point of  $\varphi_{\Delta^i(w)}$.
It follows that $ \uu$ has an S-adic representation $(a^k\beta e b)^\omega$.

Let $\vv$ denote  the Sturmian word with the S-adic representation $(\beta e ba^k)^\omega$.
The word $\vv$ begins  with the letter $1$ and thus  $\vv=1 \vv'$ for some Sturmian word $\vv'$.
Moreover,
\[
\uu= \varphi_a^k(\vv) =  \varphi_a^k(1\vv').
\]
By  Corollary \ref{jinak} \Cref{Claim2}, $\uu = 1\varphi_b^k(\vv')$.
By Corollary \ref{jinak}   \Cref{Claim1},  $0^k$ is a factor of $\uu$ and $\dd_\uu(0^k) = \vv' = \sigma(\vv)$.
\Cref{neniFixed} implies that $\vv'$ is not fixed by any primitive substitution.

\medskip

To sum up:
\begin{itemize}
\item $\uu$ is  a derived word  to a prefix of the fixed point  $\ww$  of the substitution $\psi$;
\item $0^k$ is a factor of $\uu$ and the derived word of $\dd_\uu(0^k) $ is not fixed by any primitive substitution.
\end{itemize}

It implies that the fixed point $\ww$ of the primitive substitution  $\psi$ is not closeable under derivation.

The second case $\Delta^i(w) = \alpha^kb e \beta$ follows easily from the first case by exchanging the letters $0$ and $1$.
\end{proof}

\section{closeable under derivation Sturmian substitutions} \label{sec:deriv}

The aim of this section  is to prove the following theorem.
\begin{thm} \label{jeReflex}
If $\psi= \varphi_w$ is a primitive Sturmian substitution such that  $w \in \{b,
\beta\}^*\cup\{b,
\alpha\}^*\cup\{a,
\beta\}^*\cup\{a,
\alpha\}^*$,
then $\psi$ is closeable under derivation.
\end{thm}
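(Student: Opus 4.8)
The plan is to establish the following stronger claim, from which the theorem follows: if $\uu$ is a fixed point of a primitive two-letter substitution $\varphi_w$, then every derived word $\dd_\uu(w_0)$ is again a Sturmian word fixed by a two-letter substitution, and only finitely many such derived words occur. Granting this, one takes $M$ to be the finite set of two-letter substitutions fixing all words reachable by iterating the construction. The engine of the argument is a comparison of $\uu$ with the standard word ${\bf c}_\gamma$ of the same slope $\gamma$, which is reached from $\uu$ by a bounded number of shifts.

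First I would record that the standard word is self-contained. Since $\uu$ is fixed by a primitive substitution, $\gamma$ is a Sturm number, so by \Cref{vRovinePulka2}(1) the word ${\bf c}_\gamma$ is fixed by some $\varphi_{w'}$ with $w'\in\{b,\beta\}^*$. By \Cref{thm:main_result_vetsina} the derived word of ${\bf c}_\gamma$ with respect to any prefix is the fixed point of $\Delta^i(\varphi_{w'})$; as $w'$ has only two letters, \Cref{vsechny4}(I) gives $\Delta^i(\varphi_{w'})=\varphi_{{\rm cyc}^i(w')}$, still a standard $\{b,\beta\}$-morphism. Because every left special factor of ${\bf c}_\gamma$ is a prefix, \Cref{prolong} reduces an arbitrary factor to a bispecial prefix, so in fact every derived word of ${\bf c}_\gamma$ is standard.

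The heart of the proof is the passage from $\uu$ to ${\bf c}_\gamma$. As $w$ uses only two letters, \Cref{vRovinePulka1} forces the intercept of $\uu$ to lie in $\{\ell_1,0,\ell_0+\ell_1,\ell_0\}$; computing the forward orbit of these values under the exchange $T$ by means of \Cref{shift} yields $\ell_0\mapsto 0\mapsto \ell_1$ and $\ell_0+\ell_1\mapsto\ell_1$, the standard value $\ell_1$ being the only terminal one, hence $\sigma^m(\uu)={\bf c}_\gamma$ for some $m\le 2$. For a factor $w_0$ I then track the derived word along these shifts using two elementary relations: when $w_0$ is not a prefix of the current word, applying $\sigma$ leaves the derived word unchanged, while when $w_0$ is a prefix, applying $\sigma$ deletes the first return letter and so replaces the derived word by its own shift. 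This produces $\sigma^{t}\bigl(\dd_\uu(w_0)\bigr)=\dd_{{\bf c}_\gamma}(w_0)$ for some $t\le m\le 2$. The right-hand side is standard by the previous paragraph, so $\dd_\uu(w_0)$ is a Sturmian word whose $t$-th shift is standard; tracing $T^{-t}$ back through the orbit above shows its intercept is again one of the special values $\ell_1,0,\ell_0$, whence \Cref{vRovinePulka2} exhibits a two-letter substitution fixing it.

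Finally, finiteness and closure: $\dd_{{\bf c}_\gamma}(w_0)$ ranges over the fixed points of the morphisms $\Delta^i(\varphi_{w'})$, a finite family by \Cref{vsechny4}(II), and with $t\le 2$ this confines the derived words of $\uu$ to finitely many special words whose slopes lie in the finite $\Delta$-orbit of $\varphi_{w'}$. Taking $M$ to consist of the two-letter substitutions fixing every word obtained by iterating the construction keeps everything inside this finite $\Delta$-orbit together with the at most four special intercepts, so $M$ is finite, contains $\psi$, and is closed under derivation by the argument above applied to each of its fixed points. I expect the real work to be the careful bookkeeping of the two shift relations across the boundary intercepts $0,\ell_0,\ell_0+\ell_1$, where upper and lower Sturmian words must be distinguished exactly as in \Cref{shift}, and the verification that the reachable family of special words is genuinely finite and closed. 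A pleasant feature of this route is that \Cref{thm:main_result_vetsina} — and with it the operation $\Delta$, which is undefined for $\{a,\alpha\}$-type names — is invoked only on the standard word ${\bf c}_\gamma$, so the otherwise troublesome $\{a,\alpha\}$ case requires no separate treatment.
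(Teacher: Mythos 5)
Your proposal is correct in substance, but it follows a genuinely different route from the paper, so a comparison is in order. The shared core is the reduction to the standard word: the paper also uses \Cref{vRovinePulka1} and \Cref{Shift} to see that after at most two applications of $\sigma$ one lands on ${\bf c}_\gamma$, and it handles non-prefix bispecial factors exactly as you do, via $\dd_\uu(v)=\dd_{\sigma(\uu)}(v)$. The divergence is in the treatment of prefixes and in how the fixing substitutions are produced. The paper splits into four cases, applies \Cref{thm:main_result_vetsina} directly to $\uu$ for prefixes in the $\{b,\beta\}^*$, $\{b,\alpha\}^*$ and $\{a,\beta\}^*$ cases, and must import \Cref{lem:revers_of_standard} from \cite{KlMePeSt18} for the $\{a,\alpha\}^*$ case, where $\Delta$ is undefined; in return it obtains the completely explicit closed set $C(w)\cup C\bigl(H(w)\bigr)\cup C\bigl(F(w)\bigr)\cup C\bigl(HF(w)\bigr)$. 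You instead push every factor, prefix or not, through the shifts to ${\bf c}_\gamma$ using the relation $\dd_{\sigma(\vv)}(w_0)=\sigma\bigl(\dd_\vv(w_0)\bigr)$ when $w_0$ is a prefix (a correct identity that the paper never states), and then recover a substitution fixing $\dd_\uu(w_0)$ from its intercept via \Cref{vRovinePulka2}. This buys uniformity --- the $\{a,\alpha\}^*$ case needs no separate proposition --- at the price of a less explicit $M$ and a heavier reliance on the Yasutomi-type characterization. Two points you should make explicit, though neither is a genuine gap: first, that $\dd_\uu(w_0)$ is itself a Sturmian word, hence a 2iet coding with a well-defined intercept (a word whose $t$-th shift is Sturmian need not be Sturmian in general; here one invokes Vuillon's two-return-word theorem and the induced-transformation picture, after which injectivity of $T$ pins the intercept down to $T^{-t}(\ell_1)$, one of the four special values of \Cref{vRovinePulka1}); second, that the substitutions supplied by \Cref{vRovinePulka2} for intercept $\ell_0$ have two fixed points, both of which must be included in the reachable family before declaring $M$ closed. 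With that bookkeeping, which you have already flagged, the argument goes through.
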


The proof will be split into four cases according to the couple of letters which appear in the name $w$ determining the substitution $\varphi_w$.
To abbreviate the notation we set
\[
C(w) = \{\varphi_{v} \colon  v={\rm cyc}^k(w), k \in \mathbb{N}  \} \quad \text{ for } w \in \{a,b,\alpha, \beta\}^*.
\]

\subsection{Case \texorpdfstring{$w \in \{b,\beta\}^*$}{w in {b,beta}*}} \label{standard}

The substitution $\varphi_w$  is composed from the  standard  Sturmian substitutions  $\varphi_\beta$ and $\varphi_b$.
Therefore, the fixed point $\uu$ of  $\varphi_w$ is a standard Sturmian word.
Any  bispecial factor of a standard Sturmian word is one of its prefixes.
Thus by \Cref{prolong} the derived word to an arbitrary factor of $\uu$ coincides with the derived word to a prefix of $\uu$.
\Cref{thm:main_result_vetsina} implies that  the derived word to a prefix of $\uu$ is fixed by a substitution $\varphi_v$ with $v={\rm cyc}^k(w)$ for some $k$, i.e., $\varphi_v \in C(w)$.
Clearly, $v$ belongs to $\{b,\beta\}^*$ and thus $\varphi_v$ is again a standard Sturmian substitution.
Thus, we may repeat this argument, and we can conclude the following.

\begin{claim}\label{betab}
For any  $w \in \{b,\beta\}^*$,  the set $C(w)$ is closed under derivation.
\end{claim}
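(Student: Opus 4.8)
The plan is to verify the defining property of closure under derivation directly for $C(w)$, by reducing every derived word to a derived word to a prefix and then applying \Cref{thm:main_result_vetsina} together with \Cref{it:vsechny4:1} of \Cref{vsechny4}. First I would check that $C(w)$ is a genuine candidate, i.e. a finite set of primitive substitutions: every cyclic shift $v$ of $w$ again lies in $\{b,\beta\}^*$, and since $\varphi_w$ is primitive the name $w$ contains both a $b$ and a $\beta$; cyclic shifting preserves the multiset of letters, so each such $v$ contains both letters and each $\varphi_v$ is primitive. There are at most $|w|$ distinct cyclic shifts, so $C(w)$ is finite.

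Then I would fix an arbitrary $\varphi_v \in C(w)$, a fixed point $\uu$ of $\varphi_v$, and a factor $f$ of $\uu$, and show that $\dd_\uu(f)$ is fixed by an element of $C(w)$. Since $v \in \{b,\beta\}^*$, the morphism $\varphi_v$ is a composition of the standard Sturmian morphisms $\varphi_b$ and $\varphi_\beta$, so $\uu$ is a standard Sturmian word. In a standard Sturmian word the left special factors are exactly its prefixes, hence every bispecial factor is a prefix. By the consequence of \Cref{prolong} recorded just after that lemma, $\dd_\uu(f)$ coincides with the derived word to either a right special prefix or a bispecial non-prefix factor; as the latter cannot occur here, $\dd_\uu(f) = \dd_\uu(p)$ for some prefix $p$ of $\uu$.

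With the reduction to prefixes in place, \Cref{thm:main_result_vetsina} shows that $\dd_\uu(p)$ is fixed by $\Delta^j(\varphi_v)$ for some $j \in \N$. The decisive input is \Cref{it:vsechny4:1} of \Cref{vsechny4}: because the name $v$ uses only the two letters $b,\beta$, the operator $\Delta$ acts on $v$ as an iteration of the cyclic shift, so $\Delta^j(v) = {\rm cyc}^k(v)$ for some $k$. As $v$ is itself a cyclic shift of $w$, the word $\Delta^j(v)$ is again a cyclic shift of $w$, and therefore $\varphi_{\Delta^j(v)} \in C(w)$. Hence $\dd_\uu(f)$ is fixed by an element of $C(w)$; as $\varphi_v$, $\uu$, and $f$ were arbitrary, $C(w)$ is closed under derivation.

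The point to get right --- and the reason the argument terminates rather than spreading to ever new morphisms --- is exactly \Cref{it:vsechny4:1}: $\Delta$ never manufactures a letter from $\{a,\alpha\}$ out of a name in $\{b,\beta\}^*$, and on such names it merely permutes cyclically. This keeps the entire $\Delta$-orbit of each $\varphi_v$ inside the single cyclic-shift orbit $C(w)$, so no substitutions outside $C(w)$ are ever produced. The remaining ingredients --- finiteness and primitivity of $C(w)$, and the standardness of $\uu$ forcing bispecial factors to be prefixes --- are routine once this observation is made.
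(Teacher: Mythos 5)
Your proposal is correct and follows essentially the same route as the paper: standardness of the fixed point forces all bispecial factors to be prefixes, \Cref{prolong} reduces an arbitrary factor to a prefix, \Cref{thm:main_result_vetsina} gives the fixing morphism $\Delta^j(\varphi_v)$, and \Cref{it:vsechny4:1} of \Cref{vsechny4} keeps the $\Delta$-orbit inside the cyclic-shift class $C(w)$. You merely make explicit two points the paper leaves implicit (finiteness and primitivity of $C(w)$, and quantifying over all of $C(w)$ at once rather than ``repeating the argument''), which is a harmless elaboration.
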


Discussion of the other  cases uses a consequence of \Cref{vRovinePulka1,vRovinePulka2}.
Recall that the shift operator $\sigma$ erases the starting letter of an infinite word, i.e., maps the word $\uu = u_0u_1u_2 \dots$ to the word $\sigma(\uu) = u_1u_2u_3 \dots$.
To abbreviate the notation we define two projections   $H$ and $F$: $\{a,b,\alpha,\beta\}^* \to \{a,b, \alpha,\beta\}^*$ by
\[
H(a)=H(b)=b, H(\alpha)= \alpha,  H(\beta) = \beta \quad \text{ and } \quad F(a)=a, F(b)=b, F(\alpha)= F(\beta) = \beta.
\]

\begin{lem}\label{Shift}
Let $ \uu$ be fixed by a substitution $\varphi_w$.
\begin{enumerate}[(1)]
\item If $w \in \{ a,\beta\}^*$, then  $\sigma(\uu)$ is a standard Sturmian word which is fixed by the substitution $\varphi_{H(w)}$. \label{it:Shift1}
\item If  $w \in \{ b,\alpha\}^*$, then  $\sigma(\uu)$ is a standard Sturmian word which is fixed by the substitution $\varphi_{F(w)}$. \label{it:Shift2}
\item If $w \in \{ a,\alpha\}^*$ and $\uu$ has a prefix $1$, then  $\sigma(\uu)$ is fixed by the substitution $\varphi_{H(w)}$. \label{it:Shift3}
\item  If $w \in \{ a,\alpha\}^*$ and $\uu$ has a prefix $0$,  then  $\sigma(\uu)$ is fixed by the substitution $\varphi_{F(w)}$.  \label{it:Shift4}
\end{enumerate}
\end{lem}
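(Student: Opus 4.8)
The plan is to reduce all four items to a single pair of intertwining identities and then read off the fixed-point property after deleting the first letter. The two ingredients are the conjugacy relations
\[
\varphi_b(\mathbf{x}) = 0\,\varphi_a(\mathbf{x}) \qquad \text{and} \qquad \varphi_\beta(\mathbf{x}) = 1\,\varphi_\alpha(\mathbf{x}), \qquad \mathbf{x}\in\{0,1\}^{\mathbb N}.
\]
The first is the identity already established in the proof of \Cref{jinak}; the second is obtained in the same way from the word-level equality $1\varphi_\alpha(x)=\varphi_\beta(x)1$, which one checks on the two letters $x\in\{0,1\}$ and extends to infinite $\mathbf{x}$. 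I also need the first letter of $\uu$ in each case: by \Cref{it:vRovinePulka3} of \Cref{vRovinePulka1}, a word fixed by $\varphi_w$ with $w\in\{a,\beta\}^*$ has intercept $\rho=\ell_0+\ell_1$, hence is the upper Sturmian word and begins with $1$, while by \Cref{it:vRovinePulka2} the case $w\in\{b,\alpha\}^*$ gives $\rho=0$, the lower Sturmian word beginning with $0$; in \Cref{it:Shift3,it:Shift4} the first letter is prescribed by hypothesis.

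The heart of the argument is the following, proved by induction on $|w|$. For $w\in\{a,\beta\}^*\cup\{a,\alpha\}^*$ and every $\mathbf{x}$,
\[
\varphi_w(1\mathbf{x}) = 1\,\varphi_{H(w)}(\mathbf{x}),
\]
and for $w\in\{b,\alpha\}^*\cup\{a,\alpha\}^*$ and every $\mathbf{x}$,
\[
\varphi_w(0\mathbf{x}) = 0\,\varphi_{F(w)}(\mathbf{x}).
\]
The one-letter base cases are direct: $\varphi_\beta(1\mathbf{x})=1\varphi_\beta(\mathbf{x})$ and $\varphi_\alpha(1\mathbf{x})=1\varphi_\alpha(\mathbf{x})$ because $\varphi_\beta(1)=\varphi_\alpha(1)=1$, whereas $\varphi_a(1\mathbf{x})=10\,\varphi_a(\mathbf{x})=1\varphi_b(\mathbf{x})$ by the first conjugacy relation; dually $\varphi_a(0\mathbf{x})=0\varphi_a(\mathbf{x})$ and $\varphi_b(0\mathbf{x})=0\varphi_b(\mathbf{x})$ are trivial, while $\varphi_\alpha(0\mathbf{x})=01\,\varphi_\alpha(\mathbf{x})=0\varphi_\beta(\mathbf{x})$ by the second relation. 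For the inductive step I write $w=w_0w'$, apply the hypothesis to $\varphi_{w'}$ to extract the leading $1$ (resp. $0$), and then apply the one-letter identity to $\varphi_{w_0}$; since $H$ and $F$ are letter-to-letter maps we have $H(w_0w')=H(w_0)H(w')$ (resp. the same for $F$), which closes the induction.

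Granting the identities, each item is immediate: writing $\uu=1\sigma(\uu)$ (resp. $\uu=0\sigma(\uu)$) and using $\uu=\varphi_w(\uu)$ turns the relevant identity into $1\sigma(\uu)=1\varphi_{H(w)}(\sigma(\uu))$ (resp. $0\sigma(\uu)=0\varphi_{F(w)}(\sigma(\uu))$), so that $\sigma(\uu)$ is fixed by $\varphi_{H(w)}$ (resp. $\varphi_{F(w)}$). In \Cref{it:Shift1,it:Shift2} the name $H(w)$, resp. $F(w)$, lies in $\{b,\beta\}^*$, so $\sigma(\uu)$ is fixed by a composition of the standard morphisms $\varphi_b,\varphi_\beta$ and is therefore a standard Sturmian word, as claimed (consistently, \Cref{shift} gives the shifted intercept $\ell_1$). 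I do not expect a deep obstacle here: the only genuinely error-prone point is the bookkeeping of the initial letter, which decides whether the prefix-$1$ or prefix-$0$ identity applies and hence which conjugacy relation is invoked at each peeling step. Once the two base identities are verified, the induction and the final substitution $\mathbf{x}=\sigma(\uu)$ are routine.
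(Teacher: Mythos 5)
Your proof is correct, but it reaches the conclusion by a genuinely different route than the paper. The paper works entirely on the parameter side: it uses \Cref{vRovinePulka1} to pin down the intercept $\rho$ of $\uu$, \Cref{shift} to compute the intercept of $\sigma(\uu)$, and then \Cref{interceptzustava} together with \Cref{vRovinePulka2} to identify $\sigma(\uu)$ with the fixed point of $\varphi_{H(w)}$ (resp.\ $\varphi_{F(w)}$), relying on the fact that the slope and the normalized intercept determine the word. You instead prove the exact intertwining identities $\varphi_w(1\mathbf{x})=1\,\varphi_{H(w)}(\mathbf{x})$ and $\varphi_w(0\mathbf{x})=0\,\varphi_{F(w)}(\mathbf{x})$ by induction from the two letter-level conjugacies $0\varphi_a(\mathbf{x})=\varphi_b(\mathbf{x})$ and $1\varphi_\alpha(\mathbf{x})=\varphi_\beta(\mathbf{x})$ (the first already appearing in the proof of \Cref{jinak}), and then read off the fixed-point property after deleting the first letter; I checked the base cases and the induction, and they are sound, including the necessary exclusions of $b$ from the prefix-$1$ identity and of $\beta$ from the prefix-$0$ identity. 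Your argument is more combinatorial and actually yields the \emph{specific} morphism $\varphi_{H(w)}$ or $\varphi_{F(w)}$ directly, whereas the paper obtains it through uniqueness of the Sturmian word with given slope and intercept; the price is that you still need \Cref{vRovinePulka1} to determine the first letter of $\uu$ in items (1) and (2) (upper Sturmian word starting with $1$ when $w\in\{a,\beta\}^*$, lower one starting with $0$ when $w\in\{b,\alpha\}^*$), and the standardness of $\sigma(\uu)$ in those items comes either from the fixing morphism lying in $\langle\varphi_b,\varphi_\beta\rangle$, as you note, or equivalently from $\rho=\ell_1$ via \Cref{shift}. Both arguments are complete; yours is arguably more self-contained on the morphism side.
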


\begin{proof}
 Let $(\ell_0, \ell_1, \rho)$ be the parameters  of  $\uu$.

\Cref{it:Shift1}: By \Cref{vRovinePulka1}, $\uu$  has parameters $\ell_0, \ell_1, \rho= \ell_0 +\ell_1$.
In particular, $\uu$  is an  upper Sturmian word.
By \Cref{shift}, $\sigma(\uu)$  has parameters $\ell_0, \ell_1, \rho= \ell_1$, i.e.,  $\sigma(\uu)$  is a standard Sturmian word.
Clearly,  the slopes of $\sigma(\uu)$ and $\uu$  coincide.
Using \Cref{interceptzustava,vRovinePulka2}, the word $\sigma(\uu)$ is fixed by the  substitution  $\varphi_{H(w)}$.

\Cref{it:Shift2}: Analogous to the proof of \Cref{it:Shift1}.

\Cref{it:Shift3}:    The substitution $\varphi_w$ has two fixed points, one starting with the letter  $0$ and one starting with  the letter $1$.
By \Cref{vRovinePulka1}, both  fixed points represent a coding of a two interval exchange transformation $T$ with  parameter $\rho$  satisfying   $\rho = \ell_0$.
If $\uu$  starts with $1$,  then  $\uu$  is a coding of the transformation $T$ with  the domain $[0, \ell_0+\ell_1)$.
In this case,   $T(\rho) = 0$ and  by \Cref{vRovinePulka2} the word $ \sigma(\uu)$  is  fixed by the substitution $\varphi_{H(w)}$.

\Cref{it:Shift4}: Analogous to the proof of \Cref{it:Shift3}.
\end{proof}

\subsection{ Case \texorpdfstring{$w\in  \{b,\alpha\}^*$}{w in {b,alpha}*}}

Theorem \ref{thm:main_result_vetsina} implies that a derived word to a prefix of $\uu$  is fixed by one of the substitutions  from $C(w)$.
 Using \Cref{prolong}, we  can focus on  derived words to bispecial non-prefixes of $\uu$.
 By \Cref{Shift}, the word $\sigma(\uu)$ is a standard Sturmian word and thus any bispecial factor $v$ of $\uu$  occurs as a prefix of $\sigma(\uu)$.
 Therefore,   $\dd_{\uu}(v) =\dd_{\sigma(\uu)}(v)$.
 By \Cref{Shift},  the word $\sigma(\uu)$ is fixed by the standard substitution $\varphi_{F(w)}$, i.e.,  $F(w) \in \{b, \beta\}^*$.
 Using \Cref{betab} we conclude the following.

\begin{claim}\label{alphab}
For any  $w \in \{b, \alpha\}^*$,  the set $C(w) \cup C\bigl(F(w)\bigr)$ is closed under derivation.
\end{claim}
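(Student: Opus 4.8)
The plan is to follow the same scheme that proved Claim \ref{betab}, using the shift trick of \Cref{Shift} to reduce the $\{b,\alpha\}^*$ case to the already-settled standard case. Let $w \in \{b,\alpha\}^*$ with $\varphi_w$ primitive, and let $\uu$ be a fixed point of $\varphi_w$. I want to show that every derived word $\dd_\uu(x)$ is fixed by a morphism in the candidate set $C(w) \cup C(F(w))$, and moreover that iterating the derivation never leaves this set. First I would dispose of the prefix case: by \Cref{thm:main_result_vetsina}, the derived word to any prefix of $\uu$ is fixed by $\Delta^j(\varphi_w)$, and since $w$ uses only two letters, \Cref{it:vsechny4:1} of \Cref{vsechny4} tells us $\Delta$ acts as an iterated cyclic shift, so $\Delta^j(\varphi_w) = \varphi_v$ with $v = {\rm cyc}^k(w) \in \{b,\alpha\}^*$; hence $\varphi_v \in C(w)$.

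The main work is the non-prefix case. By \Cref{prolong}, it suffices to treat bispecial factors $v$ of $\uu$ that are not prefixes. The key reduction is \Cref{it:Shift2} of \Cref{Shift}: since $w \in \{b,\alpha\}^*$, the word $\sigma(\uu)$ is a \emph{standard} Sturmian word. Because every bispecial factor of a standard Sturmian word is one of its prefixes, each such $v$ appears as a prefix of $\sigma(\uu)$, which forces $\dd_\uu(v) = \dd_{\sigma(\uu)}(v)$. Again by \Cref{it:Shift2}, $\sigma(\uu)$ is fixed by the standard substitution $\varphi_{F(w)}$ with $F(w) \in \{b,\beta\}^*$. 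Thus $\dd_\uu(v)$ is a derived word to a prefix of a fixed point of a morphism from the $\{b,\beta\}^*$ family, and by \Cref{betab} it is fixed by a morphism in $C(F(w))$.

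Putting the two cases together shows that every derived word of $\uu$ is fixed by a member of $C(w) \cup C(F(w))$. To close the set under derivation I then have to check that derived words of fixed points of the \emph{other} members of this set stay inside it. For $\varphi_v \in C(w)$ with $v \in \{b,\alpha\}^*$ the same argument applies verbatim, giving $C(v) = C(w)$ and $C(F(v)) = C(F(w))$. For $\varphi_{v'} \in C(F(w)) \subseteq \langle\varphi_b,\varphi_\beta\rangle$, the closure is exactly the content of \Cref{betab}. The step I expect to be the main obstacle — and the one worth stating carefully — is the claim $\dd_\uu(v) = \dd_{\sigma(\uu)}(v)$: I must confirm that shifting by the single deleted letter does not alter the set of return words to $v$ (it does not, since $v$ is not a prefix of $\uu$, so no occurrence of $v$ is lost), and that the normalization used in \Cref{Shift} genuinely identifies $\sigma(\uu)$ as the standard word with the matching slope. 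Everything else is bookkeeping with the projections $H$ and $F$ and the two-letter behaviour of $\Delta$.
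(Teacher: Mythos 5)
Your proposal is correct and follows essentially the same route as the paper: reduce prefixes to $C(w)$ via \Cref{thm:main_result_vetsina} and the two-letter cyclic-shift behaviour of $\Delta$, reduce non-prefix bispecial factors via \Cref{prolong} and \Cref{it:Shift2} of \Cref{Shift} to prefixes of the standard word $\sigma(\uu)$ fixed by $\varphi_{F(w)}$, and close the loop with \Cref{betab}. Your extra care about why $\dd_\uu(v) = \dd_{\sigma(\uu)}(v)$ for non-prefix $v$, and the explicit check that derivation of the other members of $C(w)\cup C(F(w))$ stays inside the set, only makes explicit what the paper leaves implicit.
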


\subsection{Case  \texorpdfstring{$w\in \{a, \beta\}^*$}{w in {a,beta}*}}

This case is analogous to the previous one. Indeed,
the words $\uu$ and $E(\uu)$ have the same  (up to the permutation of letters) set of derived words.
Since  $\uu$ is fixed  by  $\varphi_w$, the word   $E(\uu)$ is fixed by $E\varphi_w E$.
As $\varphi_a = E\varphi_\alpha E$ and  $\varphi_b = E\varphi_\beta E$, the substitutions $E\varphi_w E = \varphi_{v}$, where $v \in   \{b,
\alpha\}^*$.

\begin{claim}\label{abeta}
For any  $w \in \{a, \beta\}^*$,  the set $C(w) \cup C\bigl(H(w)\bigr)$ is closed under derivation.
\end{claim}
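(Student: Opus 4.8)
The plan is to reduce this case to \Cref{alphab} by conjugating with the letter-exchange morphism $E\colon 0\mapsto 1,\ 1\mapsto 0$, following the reduction announced in the paragraph just before the claim. First I would record the effect of this conjugation on the generators. Since $\varphi_a=E\varphi_\alpha E$, $\varphi_b=E\varphi_\beta E$ and $E^2=\mathrm{id}$, for every $u\in\{a,b,\alpha,\beta\}^*$ one gets $E\varphi_u E=\varphi_{\widehat u}$, where $u\mapsto\widehat u$ is the involution of the alphabet $\{a,b,\alpha,\beta\}$ exchanging $a\leftrightarrow\alpha$ and $b\leftrightarrow\beta$ (extended to words letter by letter). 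Writing $v=\widehat w$, a word $w\in\{a,\beta\}^*$ is sent to $v\in\{b,\alpha\}^*$, which is precisely the case already settled in \Cref{alphab}.

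Next I would identify the two relevant sets of morphisms. Because $u\mapsto\widehat u$ commutes with the cyclic shift ${\rm cyc}$, conjugation by $E$ sends $C(v)$ bijectively onto $C(\widehat v)=C(w)$; it then remains to check that it also sends $C(F(v))$ onto $C(H(w))$. Using again that ${\rm cyc}$ commutes with $u\mapsto\widehat u$, this amounts to the single identity $\widehat{F(\widehat w)}=H(w)$ for $w\in\{a,\beta\}^*$, which I would verify letter by letter: $a\mapsto\alpha\mapsto\beta\mapsto b$ and $\beta\mapsto b\mapsto b\mapsto\beta$, which is exactly $H$ on $\{a,\beta\}$. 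Hence $M:=C(w)\cup C(H(w))$ is exactly the $E$-conjugate $\{E\psi E\colon\psi\in M'\}$ of the set $M':=C(v)\cup C(F(v))$ shown to be closed under derivation in \Cref{alphab}; in particular $M$ is finite and, since conjugation by $E$ and cyclic shifts preserve primitivity, consists of primitive substitutions.

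The heart of the argument is that the property of being closed under derivation is invariant under conjugation by $E$, and this is the step I would carry out with care. A word $\uu$ is fixed by $\varphi\in M$ if and only if $E(\uu)$ is fixed by $E\varphi E\in M'$. Moreover $E$ is a bijection of $\{0,1\}^\N$ taking factors to factors and return words to return words while preserving their order of occurrence, so for every factor $w'$ of $\uu$ the derived words agree as index sequences, $\dd_{\uu}(w')=\dd_{E(\uu)}(E(w'))$, under the compatible labelling of the two return words. By \Cref{alphab} this sequence has a representative fixed by some $\psi'\in M'$; swapping its two letters produces a representative fixed by $E\psi'E\in M$. Since derived words are compared only up to a permutation of letters, this exhibits a morphism of $M$ fixing $\dd_\uu(w')$, and therefore $M$ is closed under derivation.

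The step I expect to require the most care is the last one: one must be precise about the meaning of \emph{fixed up to a permutation of letters}, and observe that the only available permutation of the derived word (the swap of the two return-word labels, induced by $E$) is exactly the one converting a morphism of $M'$ into the corresponding morphism of $M$. The remaining ingredients, namely $E\varphi_u E=\varphi_{\widehat u}$ and the combinatorial identity $\widehat{F(\widehat w)}=H(w)$, are routine verifications.
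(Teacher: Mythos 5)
Your proposal is correct and follows essentially the same route as the paper: the paper's proof of \Cref{abeta} is exactly the reduction to \Cref{alphab} via conjugation by $E$, using $\varphi_a=E\varphi_\alpha E$ and $\varphi_b=E\varphi_\beta E$ and the fact that $\uu$ and $E(\uu)$ have the same derived words up to permutation of letters. You merely spell out the details the paper leaves implicit (the identity $\widehat{F(\widehat w)}=H(w)$ and the invariance of closedness under derivation under $E$-conjugation), and these verifications are all correct.
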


\subsection{ Case \texorpdfstring{$w\in \{a,\alpha\}^*$}{w in {a,alpha}*}}

The substitution $\varphi_w$ has two fixed points.
The derived words to  prefixes of the fixed points of  $\varphi_w$ are described in \cite{KlMePeSt18}:
\begin{prop}[\cite{KlMePeSt18}] \label{lem:revers_of_standard}
	Let $a$ be the first letter of the word $w \in\{a,\alpha\}^*$.
	\begin{enumerate}[(i)]
		\item \label{it:reverse:1} Let $\uu$ be the fixed point of $\varphi_w$ starting with $0$ and $p$ be a non-empty prefix of $\uu$.
		Denote $v =b^{-1}N(wb)\in \{a,\beta\}^*$.
		The derived word  ${\bf d_{\bf u}}(p)$  equals a derived word ${\bf d_{\bf v}}(q)$, where
		 ${\bf v }$ is the unique fixed point of the substitution $\varphi_v$ and $q$ is a prefix of $\vv$.

		\item \label{it:reverse:2} Let $\uu$ be the fixed point of $\varphi_w$ starting with $1$ and $p$ be a non-empty prefix of $\uu$.
		Put $v ={\rm cyc} (w)$.
		The word ${\bf d_{\bf u}}(p)$  equals a derived word ${\bf d_{\bf v}}(q)$, where
		 ${\bf v }$ is the  fixed point of the substitution $\varphi_v$ starting with $1$  and $q$ is a prefix of $\vv$.
	\end{enumerate}
\end{prop}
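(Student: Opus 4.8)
The plan is to prove both items by the same mechanism: reduce the case $w\in\{a,\alpha\}^*$, which is \emph{not} covered by \Cref{thm:main_result_vetsina} (for such $w$ one has $N(w)=w$, and $\Delta$ is undefined), to a preimage under one of the elementary morphisms, and then transfer derived words of prefixes using \Cref{prop:der_of_preimage_fi_b} together with the prolongation lemma. The common engine is an intertwining identity $\varphi_w\varphi_c=\varphi_c\varphi_v$ with $c\in\{a,b\}$, which, combined with the uniqueness of the relevant fixed point, forces $\uu=\varphi_c(\vv)$.

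For \Cref{it:reverse:1} I would first establish the combinatorial fact that $N(wb)=bv$ with $v\in\{a,\beta\}^*$. Since $\varphi_w$ is primitive, $w$ contains at least one $\alpha$; appending $b$ and repeatedly applying the rewriting rule $a\alpha^kb=b\beta^k a$ from \eqref{eq:relations} pushes the trailing $b$ to the front, converting every $\alpha$ into $\beta$ while preserving the number and positions of the $a$'s. This shows $N(wb)$ begins with $b$ and its remainder $v$ lies in $\{a,\beta\}^*$ and contains both an $a$ and a $\beta$, so $\varphi_v$ is primitive. By \Cref{thm:relations}, $\varphi_{wb}=\varphi_{N(wb)}=\varphi_b\varphi_v$, hence $\varphi_w\varphi_b=\varphi_b\varphi_v$. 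Writing $\vv$ for the unique fixed point of $\varphi_v$ (it is unique because $v$ contains $\beta$, so only the letter $1$ is prolongable, and $\vv$ starts with $1$), one computes $\varphi_w(\varphi_b(\vv))=\varphi_b\varphi_v(\vv)=\varphi_b(\vv)$; as $\varphi_b(\vv)$ starts with $0$ and $\varphi_w$ has a unique fixed point starting with $0$, we get $\uu=\varphi_b(\vv)$. Given a non-empty prefix $p$ of $\uu$, \Cref{it:prolong1} of \Cref{prolong} replaces it by the shortest right special prefix $ps$ with $\dd_\uu(p)=\dd_\uu(ps)$; identifying the right special prefixes of $\uu=\varphi_b(\vv)$ as exactly the words $\varphi_b(w')0$ for right special prefixes $w'$ of $\vv$, \Cref{prop:der_of_preimage_fi_b} yields $\dd_\uu(ps)=\dd_\vv(w')$ with $q=w'$ a prefix of $\vv$.

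For \Cref{it:reverse:2} the reduction is more direct. Writing $w=aw''$, the cyclic shift $v=\mathrm{cyc}(w)=w''a$ gives $\varphi_a\varphi_v=\varphi_a\varphi_{w''}\varphi_a=\varphi_w\varphi_a$, i.e.\ $\varphi_w\varphi_a=\varphi_a\varphi_v$. Taking $\vv$ to be the fixed point of $\varphi_v$ starting with $1$ (which exists since $\varphi_v(1)$ starts with $1$ for any $v\in\{a,\alpha\}^*$), the same fixed-point computation shows $\varphi_a(\vv)$ is fixed by $\varphi_w$ and starts with $1$, whence $\uu=\varphi_a(\vv)$. I then transfer derived words by the analogue of \Cref{prop:der_of_preimage_fi_b} for $\varphi_a$; this analogue follows from the $\varphi_b$ statement through the conjugacy $0\varphi_a(x)=\varphi_b(x)0$ of \Cref{jinak}, which rewrites $\varphi_a(\vv)=1\varphi_b(\sigma(\vv))$ into a $\varphi_b$-preimage situation. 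Combining it with \Cref{it:prolong1} of \Cref{prolong} as before produces a prefix $q$ of $\vv$ with $\dd_\uu(p)=\dd_\vv(q)$.

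The main obstacle is the structural correspondence between the right special prefixes of $\uu$ and of $\vv$ under the map $w'\mapsto\varphi_b(w')0$ (respectively its $\varphi_a$-analogue): one must verify that every right special prefix of the image word arises in this way, so that no derived word is lost. This rests on the injectivity of the elementary morphisms and on the fact that $\varphi_b$ and $\varphi_a$ reflect special factors of Sturmian words. The secondary technical point is the normalization identity $N(wb)=bv$; although the pushing argument is elementary, a clean write-up needs an induction on the number of maximal $\alpha$-blocks of $w$ and an appeal to \Cref{lem:normalized_words} to certify that the word produced is indeed normalized.
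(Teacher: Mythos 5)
The paper itself contains no proof of this proposition --- it is imported verbatim from \cite{KlMePeSt18} --- so your attempt can only be measured against the argument in that source, and your plan does reconstruct its essential mechanism: the normalization identity $N(wb)=b\,v$ (your pushing computation in fact yields precisely $v={\rm cyc}(F(w))$, consistent with the paper's example), hence $\varphi_w\varphi_b=\varphi_b\varphi_v$, respectively the conjugation $\varphi_w\varphi_a=\varphi_a\varphi_{{\rm cyc}(w)}$; then uniqueness of the fixed point with the prescribed initial letter forcing $\uu=\varphi_b(\vv)$, resp.\ $\uu=\varphi_a(\vv)$; then transfer of derived words through \Cref{prop:der_of_preimage_fi_b} and \Cref{prolong}. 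Your fixed-point bookkeeping is correct (since $v\in\{a,\beta\}^*$ contains $\beta$, only $1$ is prolongable under $\varphi_v$, so $\vv$ is unique and starts with $1$, and $\varphi_b(\vv)$ starts with $0$). One point you use silently but which is in fact automatic: the right special extension $ps$ provided by \Cref{it:prolong1} of \Cref{prolong} is again a \emph{prefix} of $\uu$, because each intermediate $ps'$ extends uniquely to the right, so $s$ follows the occurrence of $p$ at position $0$.

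Two steps of your plan, however, are not yet proofs. First, your correspondence ranges over \emph{non-empty} right special prefixes $w'$ of $\vv$, but $\uu=\varphi_b(\vv)$ has the right special prefix $0$, which is $\varphi_b(w')0$ only for $w'=\varepsilon$; for the prefix $p=0$ your scheme produces nothing. This case must either be treated separately (a direct count of the returns $0=\varphi_b(0)$ and $01=\varphi_b(1)$ gives $\dd_\uu(0)=\vv=\dd_\vv(\varepsilon)$, in the spirit of \Cref{jinak}) or one must note that the statement permits $q=\varepsilon$. Second, and more substantively, in \Cref{it:reverse:2} your rewriting $\uu=1\varphi_b(\sigma(\vv))$ delivers, via \Cref{prop:der_of_preimage_fi_b}, derived words with respect to prefixes of $\sigma(\vv)$, whereas the statement requires prefixes of $\vv$; the bridge from $\dd_{\sigma(\vv)}(q'')$ to $\dd_{\vv}(1q'')$ is asserted, not argued, and it needs either an appeal to \Cref{it:prolong2} of \Cref{prolong} or the direct $\varphi_a$-analogue $\dd_{\vv}(w')=\dd_{\uu}(\varphi_a(w'))$, which your conjugacy identity $0\varphi_a(x)=\varphi_b(x)0$ does yield after a one-position shift of all occurrences --- but that shift is exactly the content to be checked. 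Finally, the surjectivity of $w'\mapsto\varphi_b(w')0$ onto the right special prefixes of $\uu$ of length at least two, which you candidly flag as the main obstacle, is the heart of the matter: without it you only show that \emph{some} derived words of $\uu$ arise from $\vv$, not that every $\dd_\uu(p)$ does. It is true (a desubstitution argument using that every $1$ in $\uu$ is preceded by $0$, so a right special $P$ of length $\geq 2$ ends in $0$ and decodes uniquely), and it is precisely what \cite{KlMePeSt18} establishes, but as written your proposal assumes rather than proves it.
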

An analogous proposition can be stated if $\alpha$ is the first letter of $w \in\{a,\alpha\}^*$.
In this case, the roles of the letters $0$ and $1$ in \Cref{it:reverse:1,it:reverse:2} are then  interchanged.
In particular, the word $v$  in  \Cref{it:reverse:1} is defined by $v = \beta^{-1}N(w\beta)\in \{b,\alpha\}^*$.
Nevertheless, in the following example we show that in both cases the word $v$ belongs either to $\{ {\rm cyc}^k(F(w)) : k  \in \mathbb{N}\}$ or to $
\{ {\rm cyc}^k(H(w)) : k  \in \mathbb{N}\} $.



\begin{example}
Consider $w=a^4\alpha^2a^2\alpha a^3 \in \{a,\alpha\}^*$.
The starting letter of this word is $a$.
The word $v =b^{-1}N(wb)$  from   \Cref{lem:revers_of_standard} satisfies $v= a^3\beta^2a^2\beta a^4$.
Therefore, we have $v = {\rm cyc}(F(w))$.

Consider $w= \alpha a^3\alpha^4a \in \{a,\alpha\}^* $.
It follows that $v=\beta^{-1}N(w\beta) = b^3\alpha^4b$, and thus $v = {\rm cyc}(H(w))$.
\end{example}

\Cref{lem:revers_of_standard}  has the following direct corollary.

\begin{coro}\label{proprefix}
Let $\uu$ be a fixed point of $\varphi_w$  with $w \in \{a,\alpha\}^*$  and $p\neq \varepsilon$ be a prefix of $\uu$.
The word ${\bf d_{\bf u}}(p)$  is fixed by a substitution from $C(F(w)) \cup C(H(w))$.
\end{coro}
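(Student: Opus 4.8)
The plan is to use \Cref{lem:revers_of_standard} (and its $\alpha$-analog), applied possibly several times, to reduce the computation of ${\bf d_{\bf u}}(p)$ to the computation of a derived word ${\bf d_{\bf v}}(q)$ with respect to a prefix $q$ of the fixed point $\vv$ of a substitution $\varphi_v$ whose name contains only two letters, namely $v \in \{a,\beta\}^*$ or $v \in \{b,\alpha\}^*$, and then to finish with \Cref{thm:main_result_vetsina}. First I would record that, since $\varphi_w$ is primitive, $w$ contains both $a$ and $\alpha$, so $\varphi_w$ has exactly two fixed points, one starting with $0$ and one with $1$, and I treat $\uu$ according to its first letter.

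Next I would set up the reduction. When $\uu$ starts with $0$: if $w$ starts with $a$, part \Cref{it:reverse:1} applies directly and yields $v = b^{-1}N(wb) \in \{a,\beta\}^*$; if $w$ starts with $\alpha$, part \Cref{it:reverse:2} of the $\alpha$-analog replaces $w$ by ${\rm cyc}(w)$ while keeping the tracked fixed point starting with $0$, so I iterate. Symmetrically, when $\uu$ starts with $1$ I iterate via \Cref{it:reverse:2} (which replaces $w$ by ${\rm cyc}(w)$ and preserves the starting letter $1$) until $w$ starts with $\alpha$, at which point the $\alpha$-analog of \Cref{it:reverse:1} yields $v = \beta^{-1}N(w\beta) \in \{b,\alpha\}^*$. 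Because $w$ contains both $a$ and $\alpha$, after at most $|w|$ cyclic shifts the required first letter appears, so the process terminates. Since $F$ and $H$ commute with ${\rm cyc}$ and $C({\rm cyc}(x)) = C(x)$, the sets $C(F(w))$ and $C(H(w))$ are unchanged along the iteration; hence by the observation made right after \Cref{lem:revers_of_standard} (illustrated in the example) the terminal name satisfies $v \in C(F(w))$ in the first case and $v \in C(H(w))$ in the second.

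It remains to translate the equality ${\bf d_{\bf u}}(p) = {\bf d_{\bf v}}(q)$ into the desired statement. Here $v \in \{a,\beta\}^*$ (or $v \in \{b,\alpha\}^*$) is its own normalized name, lies outside $\{a,\alpha\}^*$, and is primitive, so \Cref{thm:main_result_vetsina} (together with the remark that the empty prefix is handled by $\Delta^0$) tells me that ${\bf d_{\bf v}}(q)$ is fixed by $\Delta^j(\varphi_v)$ for some $j \geq 0$. By \Cref{it:vsechny4:1} of \Cref{vsechny4}, on a two-letter name $\Delta$ acts as an iterated cyclic shift, so $\Delta^j(v) = {\rm cyc}^c(v)$ and $\Delta^j(\varphi_v) = \varphi_{{\rm cyc}^c(v)} \in C(v) = C(F(w))$ (resp.\ $C(H(w))$), which proves the claim. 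I expect the main obstacle to be the bookkeeping of the iteration, namely correctly matching the two cases of \Cref{lem:revers_of_standard} and its analog to the first letters of $\uu$ and of $w$, verifying termination, and checking that the terminal $v$ indeed lands in $C(F(w)) \cup C(H(w))$ via the compatibility of $F,H$ with ${\rm cyc}$; the concluding appeal to \Cref{thm:main_result_vetsina} and \Cref{vsechny4} is then routine.
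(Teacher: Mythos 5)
Your proof is correct and follows exactly the route the paper intends: the paper states \Cref{proprefix} as a ``direct corollary'' of \Cref{lem:revers_of_standard} and its $\alpha$-analogue, relying on the same observation (made just before, and illustrated by, the example) that the terminal name $v$ is a cyclic shift of $F(w)$ or $H(w)$, and on the same finishing appeal to \Cref{thm:main_result_vetsina} together with \Cref{it:vsechny4:1} of \Cref{vsechny4}. Your explicit bookkeeping of the iteration through case \Cref{it:reverse:2} and its termination is simply a spelled-out version of what the paper leaves implicit.
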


\begin{claim}\label{alphaa}  For any  $w \in \{a,
\alpha\}^*$,  the set $C(w) \cup C\bigl(H(w)\bigr)\cup C\bigl(F(w)\bigr) \cup C\bigl(HF(w)\bigr) $ is closed under derivation.
\end{claim}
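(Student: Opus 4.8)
The set $M$ decomposes into four families according to the pair of letters allowed in the name: $C(w)\subseteq\langle\varphi_a,\varphi_\alpha\rangle$, $C(H(w))\subseteq\langle\varphi_b,\varphi_\alpha\rangle$, $C(F(w))\subseteq\langle\varphi_a,\varphi_\beta\rangle$ and $C(HF(w))\subseteq\langle\varphi_b,\varphi_\beta\rangle$, and these correspond to the four two-letter cases, the last three of which are settled by \Cref{alphab,abeta,betab}. The plan is to dispose of those three families by quoting the previous claims, and to reduce the remaining family $C(w)$ with $w\in\{a,\alpha\}^*$ to them by means of \Cref{proprefix} for prefixes and a single application of the shift $\sigma$ for the non-prefix bispecial factors. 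Throughout I use that $H$ and $F$ act letterwise, hence commute with one another (so $HF(w)=FH(w)$) and with the cyclic shift (so $H({\rm cyc}^k(w))=H(w)$ up to rotation, and likewise for $F$). Consequently each of the four families is stable under passing to a cyclic conjugate of its name, which is what makes the target sets below genuinely subsets of $M$.

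First I would check that the three families $C(H(w)),C(F(w)),C(HF(w))$ are self-contained in $M$. For $\varphi_v\in C(HF(w))$ with $v\in\{b,\beta\}^*$, \Cref{betab} confines every factor-derived word of a fixed point of $\varphi_v$ to $C(v)=C(HF(w))\subseteq M$. For $\varphi_v\in C(H(w))$ with $v\in\{b,\alpha\}^*$, \Cref{alphab} confines them to $C(v)\cup C(F(v))=C(H(w))\cup C(HF(w))\subseteq M$, using $F(v)=HF(w)$ up to rotation. Symmetrically, \Cref{abeta} confines the derived words of fixed points of $\varphi_v\in C(F(w))$ to $C(F(w))\cup C(HF(w))\subseteq M$. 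Thus every derived word arising from these three families already lands in $M$.

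It remains to treat $\varphi_v\in C(w)$ with $v={\rm cyc}^k(w)\in\{a,\alpha\}^*$; let $\uu$ be a fixed point of $\varphi_v$. By \Cref{prolong} it suffices to consider two kinds of factors. If the factor is a non-empty prefix $p$ of $\uu$, then \Cref{proprefix} gives that $\dd_\uu(p)$ is fixed by a morphism of $C(F(v))\cup C(H(v))=C(F(w))\cup C(H(w))\subseteq M$ (the empty prefix yields $\uu$ itself, fixed by $\varphi_v\in M$). If the factor $x$ is bispecial but not a prefix of $\uu$, I would pass to $\sigma(\uu)$: by \Cref{it:Shift3} or \Cref{it:Shift4} of \Cref{Shift}, the word $\sigma(\uu)$ is fixed by $\varphi_{H(v)}$ or by $\varphi_{F(v)}$ according to whether $\uu$ begins with $1$ or with $0$, so $\sigma(\uu)$ is a fixed point of a morphism of the already-settled families $C(H(w))\cup C(F(w))$. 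By the previous paragraph, every derived word of $\sigma(\uu)$ — in particular $\dd_{\sigma(\uu)}(x)$ — is fixed by a morphism of $M$.

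The one point requiring an argument, and the main obstacle, is the equality $\dd_\uu(x)=\dd_{\sigma(\uu)}(x)$ for a factor $x$ that is not a prefix of $\uu$; this is the step that makes the shift reduction work, and it is the same mechanism tacitly used in \Cref{alphab,abeta}. Since $x$ is not a prefix, its first occurrence in $\uu$ is at some position $i\geq 1$, whence the first occurrence of $x$ in $\sigma(\uu)$ is at position $i-1$, and both tails from these first occurrences coincide with $\sigma^i(\uu)$. As the derived word depends only on the sequence of return words read off from the first occurrence, the two coding sequences are literally identical, hence equal up to the permutation of letters under which derived words are compared. Combining this identity with the preceding paragraph gives $\dd_\uu(x)\in M$ for every bispecial non-prefix factor, which finishes the family $C(w)$ and therefore establishes that all of $M$ is closed under derivation. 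Everything apart from this shift identity is bookkeeping with the commuting projections $H,F$ together with \Cref{betab,alphab,abeta,proprefix,Shift,prolong}.
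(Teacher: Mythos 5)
Your proposal is correct and follows essentially the same route as the paper: dispose of the families $C(H(w))$, $C(F(w))$, $C(HF(w))$ by \Cref{betab,alphab,abeta}, handle non-empty prefixes of fixed points of $\varphi_v\in C(w)$ via \Cref{proprefix}, and reduce non-prefix factors to $\sigma(\uu)$ via \Cref{Shift}. The only difference is that you spell out the identity $\dd_\uu(x)=\dd_{\sigma(\uu)}(x)$ for non-prefix factors, which the paper asserts without proof; that justification is correct.
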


\begin{proof}
We deduce  a stronger statement, namely that the  set  $M:= \{\varphi_w\}\cup C\bigl(H(w)\bigr)\cup C\bigl(F(w)\bigr) \cup C\bigl(HF(w)\bigr) $ is  closed under derivation.

First, we realize that  $H(w) \in \{b,
\alpha\}^*$, $F(w) \in \{a,
\beta\}^*$ and $HF(w) \in\{b,
\beta\}^*$.
By virtue of  \Cref{betab,alphab,abeta},  the set  $N:=M\bigl(H(w)\bigr)\cup C\bigl(F(w)\bigr) \cup C\bigl(HF(w)\bigr) $ is closed under derivation.
To demonstrate that $M= \{\varphi_w\} \cup N$ is closed under derivation,  we  only need to show that the
derived word to any factor $v$  of any fixed point $\uu$ of $\varphi_w$ is fixed by a morphism from  $M$.
Indeed:
\begin{enumerate}[a)]
\item if   $v = \varepsilon$,   then  ${\bf d_{\bf u}}(\varepsilon) = \uu$ and thus ${\bf d_{\bf u}}(\varepsilon)$ is fixed by $\varphi_w \in M$;
\item if   $v $ is a non-empty prefix of $\uu$,  then  by \Cref{proprefix}  ${\bf d_{\bf u}}(v) $ is fixed by a substitution from  $N\subset M$;
\item if   $v $ is a  non-prefix factor  of $\uu$,  then $v$ is a factor of $\sigma(\uu)$  and      $\dd_{\uu}(v) =\dd_{\sigma(\uu)}(v)$. By \Cref{it:Shift3,it:Shift4} of \Cref{Shift}, the word  $\sigma(\uu)$ is fixed by a substitution from $N$. Since $N$ is closed under  derivation,  ${\bf d_{\bf u}}(v) $ is fixed by a substitution from  $N\subset M$. \qedhere 	
\end{enumerate}
\end{proof}

The following theorem is a direct consequence of \Cref{vRovinePulka1,neniFixed} and \Cref{neniReflex}.
\begin{theorem} \label{thm:last}
Let $\uu$ code the two interval exchange transformation with parameters $\ell_0 =1- \gamma, \ell_1 = \gamma, \rho = \delta$,  where $\gamma, \delta \in [0,1]$ and $\gamma$ irrational.
If $\uu$ is fixed by a primitive substitution $\varphi$, then $\varphi$ is closeable under derivation if and only if $\delta \in \{0,\gamma, 1-\gamma, 1\}$.
\end{theorem}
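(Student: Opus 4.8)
The plan is to read the answer off the two classification results \Cref{neniReflex,jeReflex}, translating the combinatorial condition ``the normalized name of $\varphi$ uses at most two letters'' into the arithmetic condition on $\delta$ via \Cref{vRovinePulka1,vRovinePulka2}. After the normalization $\ell_0=1-\gamma$, $\ell_1=\gamma$ we have $\rho=\delta$, and the four target values $0,\gamma,1-\gamma,1$ are exactly $0,\ell_1,\ell_0,\ell_0+\ell_1$; by \Cref{vRovinePulka1} these are precisely the intercepts forced by the four mixed two-letter monoids $\{b,\alpha\}^*,\{b,\beta\}^*,\{a,\alpha\}^*,\{a,\beta\}^*$. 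Since $\varphi$ fixes $\uu$ it cannot exchange the two letters (whose frequencies $1-\gamma\neq\gamma$ differ), so $\varphi=\varphi_w$ for some $w\in\{a,b,\alpha,\beta\}^*$, and I may assume $w=N(w)$. Primitivity gives that $w$ contains at least one Latin and at least one Greek letter, hence at least two distinct letters.

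First I would prove ``$\varphi$ closeable $\Rightarrow\delta\in\{0,\gamma,1-\gamma,1\}$'' contrapositively. Suppose $\delta\notin\{0,\gamma,1-\gamma,1\}$. If $w$ had exactly two distinct letters they would be one Latin and one Greek (otherwise $\varphi_w$ is not primitive), so $w$ would lie in one of the four mixed monoids and \Cref{vRovinePulka1} would force $\delta\in\{\ell_1,0,\ell_0+\ell_1,\ell_0\}=\{\gamma,0,1,1-\gamma\}$, a contradiction. Hence $w$ has at least three distinct letters, and \Cref{neniReflex} shows $\varphi$ is not closeable under derivation. This half uses only \Cref{vRovinePulka1,neniReflex}.

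For the converse, assume $\delta\in\{0,\gamma,1-\gamma,1\}$. By \Cref{vRovinePulka2}, $\uu$ is fixed by a primitive morphism $\varphi_v$ whose name $v$ lies in the mixed monoid matching $\delta$, and by \Cref{jeReflex} there is a finite set $M$, closed under derivation, with $\varphi_v\in M$. To transfer closeability to the prescribed $\varphi$ I would show that $M\cup\{\varphi\}$ is again closed under derivation, for which it suffices to verify that every fixed point of $\varphi$ is also a fixed point of some element of $M$; the derived words to its factors are then already controlled by $M$. Concretely, every Sturmian word is rigid \cite{RiSe12}, so the morphisms fixing $\uu$ are exactly the powers $\eta^k$ of a single primitive $\eta$, and both $\varphi$ and $\varphi_v$ are such powers. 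A short computation shows that none of the four generators in \eqref{eq:elem}, and hence no composition $\eta$, ever exchanges the first letters of the images of $0$ and $1$ (the induced maps $\{0,1\}\to\{0,1\}$ are only the identity and the two constant maps, and these are closed under composition). Consequently $\eta$ and all its powers share the same set of fixed points, so $\varphi$ and $\varphi_v$ have identical fixed points; every fixed point of $\varphi$ is thus fixed by $\varphi_v\in M$, and $M\cup\{\varphi\}$ is closed under derivation. Therefore $\varphi$ is closeable.

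The main obstacle is exactly this transfer in the converse: one must exclude the possibility that $\varphi$ and the two-letter witness $\varphi_v$, though fixing the same $\uu$, behave differently under derivation. The argument above sidesteps any analysis of the normalized name of $\varphi$ itself by reducing everything to the common root $\eta$ supplied by rigidity, the decisive ingredient being the elementary ``first-letter lemma'' that guarantees passing to powers neither creates nor destroys fixed points. I expect this lemma, together with the bookkeeping of the four plane/monoid correspondences underlying \Cref{vRovinePulka1,vRovinePulka2}, to be the only points requiring care; the remainder is a direct assembly of \Cref{neniReflex,jeReflex}.
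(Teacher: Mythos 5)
Your proposal is correct and takes essentially the route the paper intends: the ``only if'' direction combines \Cref{vRovinePulka1} with \Cref{neniReflex} after reducing to a normalized name in $\mathcal{M}$, and the ``if'' direction combines \Cref{vRovinePulka2} with \Cref{jeReflex}, which is exactly what the paper's one-line ``direct consequence'' assertion amounts to. The one substantive detail you add --- transferring closeability from the two-letter witness $\varphi_v$ to the given $\varphi$ via rigidity and the observation that a Sturmian substitution and all of its powers share the same fixed points --- is a genuine gap in the paper's terse justification that your argument fills correctly.
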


\section{Comments}

We presented examples of finite sets $M$ of Sturmian substitutions that are closed under derivation.
We used two tools: $S$-adic representation of Sturmian words and algebraic characterization of Sturmian words that are fixed by a primitive substitution.
Probably the most explored class of words generalizing Sturmian words is the class of ternary Arnoux--Rauzy words.
Therefore, it is natural to generalize our results to this class, more specifically, to the substitutions fixing ternary Arnoux--Rauzy words.
However, there is no analogue of the Yasutomi's characterization of fixed points in this class.
Another well studied class that generalizes Sturmian words are words coding $k$-interval exchange transformations.
In the case $k=3$ and the permutation of interval exchange being $(321)$, an analogue to Yasutomi's conditions is provided in \cite{BaMaPe2008}.



Our example of sets $M$ which are closed under derivation are composed of Sturmian substitutions and thus all elements of $M$ act on the same alphabet.
The same property would hold for substitutions fixing ($k$-ary) Arnoux--Rauzy words and for substitutions fixing three interval exchange with the permutation $(321)$.
However, there exists an example of a set $M$ closed under derivation which contains substitutions acting on a binary alphabet and substitutions acting on a ternary alphabet such that no proper subset of $M$ is closed under derivation.
This example is given in~\cite{KoSta19} and the set $M$ contains substitutions fixing derived words to non-empty factors of the period doubling sequence.
Recall that the period doubling sequence is fixed by the substitution  $a \mapsto ab, b\mapsto aa$.
The derived words of the period doubling sequence were described in~\cite{HuangWen17}.

\section*{Acknowledgements}
Edita Pelantová acknowledges financial support by the Ministry of Education, Youth and Sports of the Czech Republic, project no. CZ.02.1.01/0.0/0.0/16\_019/0000778.
Štěpán Starosta acknowledges the support of the OP VVV MEYS funded project
CZ.02.1.01/0.0/0.0/16\_019/0000765. 
The authors are also grateful for the hospitality of Erwin Schr\"odinger International Institute for Mathematics and Physics, where a part of the work was done.

\bibliographystyle{siam}

\end{document}